\documentclass{article}
\usepackage[utf8]{inputenc}
\usepackage{float}[floatsintext]
\usepackage{blkarray}
\usepackage{multirow}

\usepackage{cancel}

\usepackage{placeins}

\usepackage[dvipsnames,table]{xcolor}

\usepackage[margin=1.56in]{geometry}

\usepackage{amsmath,amsfonts,amsthm,amssymb,esint,mathtools}
\usepackage{mathrsfs}

\newcommand\cd{\cdot} 
\newcommand\bp{\mathbf{p}} \newcommand\bs{\mathbf{s}}
\newcommand\bx{\mathbf{x}} \newcommand\by{\mathbf{y}}
\newcommand\bz{\mathbf{z}} \newcommand\bdp{\ensuremath{\mathbf{dp}}}
\newcommand\bds{\ensuremath{\mathbf{ds}}} \newcommand\bw{\mathbf{w}}

 \newcommand\oc{\mathcal{O}}
\newcommand\cls{\mathcal{S}} 
\newcommand\clh{\mathcal{H}} \newcommand\clsn{\mathcal{S}_n}
\newcommand\one{\ensuremath{-}}
\newcommand\blu[1]{\textcolor{blue}{#1}}

\newcommand\sfact[1]{\ensuremath{!_{#1}}}

\newcommand\disp{\displaystyle}

 \newcommand\ZZ{\mathbb Z}
 
\newcommand\calP{\mathcal{P}}

\newtheorem{theorem}{Theorem}[section]
\newtheorem{lemma}[theorem]{Lemma}
\newtheorem{corollary}[theorem]{Corollary}
\newtheorem{porism}[theorem]{Porism}
\newtheorem{proposition}[theorem]{Proposition}
\newtheorem{conjecture}[theorem]{Conjecture}

\theoremstyle{definition} \newtheorem{definition}[theorem]{Definition}

\newtheorem{example}[theorem]{Example}

\theoremstyle{remark} \newtheorem{remark}[theorem]{Remark}

\newtheorem{notation}[theorem]{Notation}

\usepackage{url}

\usepackage{latexsym}

\usepackage{mathtools}

\usepackage{tikz}
\usetikzlibrary{intersections,patterns,arrows,decorations.pathreplacing, calc}
\providecolor{myyellow}{rgb}{1.0, 0.70, 0.0}
\providecommand{\rankstep}{0.32cm}               
\providecommand{\plabelfont}{\ttfamily\scriptsize} 
\tikzset{
  dot/.style={circle, fill=blue, draw=none, inner sep=0pt, minimum size=6pt},
  rdot/.style={dot, fill=red},
  bigdot/.style={circle, fill=blue, draw=none, inner sep=0pt, minimum size=7pt},
  gdot/.style={bigdot, fill=myyellow},
  odot/.style={circle, fill=red, draw=none, inner sep=0pt, minimum size=6pt,
    append after command={\pgfextra{\draw[myyellow, line width=2pt]
      (\tikzlastnode.center) circle (5.5pt);}}},
  edge/.style={blue, very thick},
  plabel/.style={draw, rectangle, inner sep=4pt, font=\plabelfont,
    anchor=north, yshift=-8pt},
}

\numberwithin{equation}{section} \numberwithin{figure}{section}

\theoremstyle{remark}

\def\bino#1,#2,{\binom{#1}{#2}}

\newcommand\rd[1]{{\large\textcolor{red}{#1}}}
\newcommand\rdd[1]{{\textcolor{red}{#1}}}

\newcommand\OEIS{{\small\sc OEIS}} \newcommand\oeis{{\small\sc oeis}}

\newcommand\mcPP{{\mathcal{P}}_\pi}
\newcommand\mcPT{{\mathcal{P}}_\tau}
\newcommand\pp[1]{\ensuremath{\mcPP^#1}}
\newcommand\ppm{\ensuremath{\mcPP^m}}
\newcommand\ptm{\ensuremath{\mcPT^m}}

\newcommand\ppmj{\ensuremath{\mcPP^m(j)}}

\newcommand\ptmj{\ensuremath{\mcPT^m(j)}}
\newcommand\ptnj{\ensuremath{\mcPT^n(j)}}

\newcommand\vanish[1]{}

\newcommand\sgp{shifted grid poset} \newcommand\sgps{shifted grid
posets}

\newcommand\shift{{\mathsf{shift}}}

\newcommand\cla{\mathcal{A}} \newcommand\clp{\mathcal{P}}
\newcommand\clc{\mathcal{C}} \newcommand\clo{\mathcal{O}}
\newcommand\cll{\mathcal{L}} \newcommand\clw{\mathcal{W}}

\newcommand\defends{{\hfill$\square$}}

\usepackage{enumitem}

\usepackage[hidelinks]{hyperref}

\title{Permutations, Pattern Covers, Posets and Positivity: Part I: Patterns and
  Posets}

\author{%
  Natasha
  Blitvi\'c\thanks{\href{mailto:n.blitvic@qmul.ac.uk}{n.blitvic@qmul.ac.uk}.
    Partially supported by Leverhulme Trust Research Project Grant RPG-2020-103 and the Engineering and Physical Sciences Research Council (EPSRC) grant EP/V048902.}\\
  {\small School of Mathematical Sciences, Queen Mary University of London} \and
  Mohammed Slim
  Kammoun\thanks{\href{mailto:medslimkammoun@gmail.com}{medslimkammoun@gmail.com}.
    Partially supported by Leverhulme Trust Research Project Grant RPG-2020-103,
    CNRS-INSMI PEPS JCJC projects LAMPE and CAPRICES, LabEx CIMI
    (ANR-11-LABX-0040), and ANR project LOUCCOUM (ANR-24-CE40-7809).}\\
  {\small Université de Poitiers, CNRS, LMA} \and
  Einar Steingr\'imsson\thanks{\href{mailto:einar@alum.mit.edu}{einar@alum.mit.edu}}\\
  {\small Department of Mathematics and Statistics, University of Strathclyde}%
} \date{}

\begin{document}
\maketitle
\thispagestyle{empty}

\begin{abstract} We study permutations covered by occurrences of a
consecutive permutation pattern, with successive occurrences
overlapping in a fixed number of positions.  Several special cases of
this construction have been studied in the theory of consecutive
patterns and via the cluster method. Here we study the sequences
obtained by repeating a fixed overlap.  We elicit many structural
properties of such pattern covers, mainly in terms of posets whose
linear extensions are in bijection with the permutations in question.
The enumeration of these permutations for different patterns gives
rise to a great variety of sequences, many classical, but most of them
not found in the \OEIS.  We conjecture that, with minor modifications,
all of these sequences are moments of probability measures on the real
line.

  We develop a general enumerative and order-theoretic framework for
these objects. For overlap one, we give a generating function whose
coefficients enumerate permutations covered by an arbitrary fixed
pattern with any prescribed number of occurrences. For every overlap
at most half the length of the pattern, we derive a recursive formula
which, for fixed pattern and overlap, yields a polynomial-time
enumeration algorithm.  We then associate to each pattern, overlap,
and number of occurrences a naturally defined poset whose linear
extensions are in bijection with the corresponding pattern-covered
permutations.

The regularity of this poset construction makes possible a structural
analysis that in many cases leads to enumerative formulas for large
classes of patterns.  We prove uniform restrictions on the local
structure of the Hasse diagrams, describe their branches (pendant
chains) in terms of the corresponding patterns, and characterize when
the poset is a tree (seen as an undirected graph) or chain.

  We also construct patterns whose Hasse diagrams have a simple
grid-like structure. Some of these results yield explicit families of
formulas, explain unexpected equinumerations, and exhibit classes of
new enumerative phenomena in consecutive permutation patterns.

  In a forthcoming companion paper~\cite{BKS-patt-prob}, we
investigate the sequences arising here from a probabilistic
perspective, viewing them both as point processes and as moment
sequences. We also obtain integral and matrix representations for
important overlap $1$ and $2$ classes of pattern-covering sequences.
\end{abstract}

\tableofcontents

\section{Introduction}

Let $\cls_\ell$ be the set of permutations of the set
$\{1,2,\ldots,\ell\}$. Given permutations $\pi\in\cls_K$ and
$\sigma\in\cls_n$, an \emph{occurrence of the pattern} $\pi$ in
$\sigma$ is obtained by selecting entries of $\sigma$ whose relative
order is the same as that of $\pi$. If the selected entries are
required to occupy consecutive positions, then $\pi$ is called a
\emph{consecutive pattern}. The systematic study of permutation
patterns has its origins in work of Knuth and of Simion and Schmidt
\cite{Knuth1997,simion-schmidt}. For some background on consecutive
patterns, see \cite{genpatt-survey}.

\enlargethispage{1ex}
Two occurrences of a consecutive pattern containing some of the same entries of
a permutation impose order conditions on these entries, beyond what each
occurrence must satisfy. Thus the enumeration is not determined merely by the
length and structure of the pattern or by the number of occurrences: it depends
on how two copies of that particular pattern can fit together, at the beginning
of one and the end of the other. Cluster methods and related generating-function
techniques have provided the principal general framework for studying these
interactions \cite{Dotsenko-Khoroshkin-2013, Elizalde-Noy-2012,
  GouldenJackson1979, goulden-jackson-comb-enum, Liese-Remmel-2010}. The role of
overlap also underlies refined notions of consecutive \emph{Wilf equivalence}
(two patterns having the same enumeration) and the study of patterns two of
whose occurrences can overlap in at most one entry of a permutation
\cite{Duane-Remmel-2011,Dwyer-Elizalde-2018}.

In this paper, we require successive occurrences of a consecutive pattern in a
permutation~$\sigma$ to overlap in exactly $m$ entries for some given $m$, and
study the structure of such permutations and their enumeration.  Concretely, fix
a pattern $\pi\in\cls_K$, an overlap \mbox{$m<K$}, and $j\geq 1$, the prescribed
number of occurrences. We count permutations $\sigma\in\clsn$ containing
occurrences of $\pi$ beginning at
$$
1, 1+(K-m), 1+2(K-m), \ldots, 1+(j-1)(K-m),
$$
so that every two successive prescribed occurrences overlap in exactly
$m$ positions, and every entry of $\sigma$ belongs to some occurrence.
These conditions of course constrain the size of the permutation
$\sigma$ to be $n=K+(j-1)(K-m)$.  We call such corresponding
permutations \emph{pattern packings} and denote their count by
$N_\pi^m(j)$. (Note that we disregard potential occurrences outside
the prescribed ones.)

The individual numbers $N_\pi^m(j)$ are instances of the refined
cluster numbers studied by Dwyer and Elizalde
\cite{Dwyer-Elizalde-2018}, where occurrences may be prescribed at an
arbitrary set of starting positions.  Here we instead systematically
study the constant-overlap specialization in which the starting
positions form an arithmetic progression and the number of prescribed
occurrences varies.

Because the same overlap is repeated at every step, a packing with
$j+1$ occurrences is obtained from one with $j$ occurrences by
adjoining another copy of the same local configuration.  Although
adding a new copy in this way typically allows for rearrangement of
the previous entries, this highlights a recursive structure that can
in many cases be exploited for understanding of the global structure
and for enumeration of the permutations in question, even though it
does not make the enumeration uniformly simple. Even for short
patterns, the sequences
$$
\bigl(N_\pi^m(j)\bigr)_{j\geq 1}
$$
display strikingly different behavior. Some are given by Catalan or
Fuss--Catalan numbers, some by products of factorials or binomial
coefficients, and most appear not to belong to previously studied
families. Small changes in the pattern may alter the sequence
substantially, while quite different patterns may unexpectedly produce
the same sequence. The problem is therefore to understand how much of
this variation can be explained by general mechanisms, rather than by
a separate argument for each pattern.

The covering of permutations by occurrences of a consecutive pattern
partly generalizes the notion of the \emph{descent set}, which is the
set of starting positions of descents in a permutation, that is,
positions $i$ such that $\sigma(i)>\sigma(i+1)$, which is simply an
occurrence of the consecutive pattern 21.  In a seminal
paper~\cite{gessel-viennot-bino-dets} Gessel and Viennot gave a
formula, in terms of determinants, for the number of permutations in
$\clsn$ with any given descent set (a special case of the celebrated
Lindström-Gessel-Viennot Lemma). A general formula for the number of
permutations with any set of starting points for a given consecutive
pattern seems out of reach, given how hard the general problem is for
a single pattern occurring at regular intervals.  The only case we
present here that goes beyond regular intervals and overlaps involves
patterns whose occurrences are allowed to overlap in either one or two
places, see Proposition~\ref{prop-vary-olap}.

The first part of the paper develops such mechanisms on the
enumerative side. We establish symmetries that preserve the packing
numbers and identify further transformations specific to a fixed
overlap. For overlap $m=1$, where the enumeration depends only on the
first and last entry of a pattern, we express $N_\pi^1(j)$ as a
coefficient of a single multivariate formal power series.

For general overlap satisfying $m\leq K/2$, we derive a recurrence by analyzing
the effect of adding a new occurrence and how that allows for rearrangement of
the previous entries. For each such pair $(\pi,m)$ the recurrence gives an
algorithm for computing $N_\pi^m(j)$ as $j$ varies.  This algorithm is very
effective for $m=1$, but less so for greater $m$.

The second part of the paper develops an order-theoretic approach to
the same problem. Each occurrence of $\pi$ imposes a chain of
inequalities among the entries occupying its positions. Superimposing
the chains from the prescribed occurrences produces a finite poset,
which we call the \emph{pattern-packing poset} and denote by
$\ppm(j)$. Its linear extensions are naturally in bijection with the
corresponding pattern packings, so that
$$
N_\pi^m(j) = \left|\cll\bigl(\ppm(j)\bigr)\right|,
$$
where $\cll(\ppmj)$ is the set of linear extensions of $\ppmj$.

Special cases of these posets, encoding the inequalities imposed by
prescribed overlapping occurrences, arise in cluster constructions and
in the framework of
\cite{Dotsenko-Khoroshkin-2013,Dwyer-Elizalde-2018,Elizalde-Noy-2012}. Here
we treat the constant-overlap posets as a sequence indexed by $j$ and
exploit the repeated structure arising from the regularly spaced
starting positions of occurrences.

To make that structure visible, we introduce a \emph{stack}
representation in which each prescribed occurrence contributes a
translated copy of the same chain, and overlapping positions are
identified. Although the defining chains are individually simple,
their transitive closure can produce complicated structures. In
particular, repeated local constraints may create undirected cycles,
branching, interleaving, or grid-like regions in the Hasse
diagram. This is another manifestation of the central difficulty: the
local rule is specified by a single permutation, but its repeated
application can generate a much less transparent global order.

We prove several restrictions valid for every pattern-packing poset.
Each element is covered by at most two elements and covers at most two
elements, and the poset decomposes into a \emph{core} ---which is a
lattice bounded by elements that belong to every chain of maximal
length --- and (possibly) chain components attached to the core by
single edges. We then characterize several distinguished families
depending on the structure of their Hasse diagrams and the
corresponding undirected graph, which in many cases leads to
enumerative formulas. We give necessary and sufficient conditions for
the Hasse diagram to be a tree, and, when $m\leq K/2$, express these
conditions directly in terms of the prefix and suffix of the pattern.
We similarly characterize the case where the poset is a chain, and
construct a class of patterns whose packing posets form \emph{shifted
grids}, posets obtained from rectangular subsets of the integer
lattice $\ZZ^2$ by shifting $h$ steps to the right the unit rectangles
at height $h$. These classifications identify finite features of the
pattern that control the structure of the entire family
$$
\ppm(1),\ppm(2),\ppm(3),\ldots.
$$

The poset viewpoint also clarifies many equinumerations observed among
the enumerating sequences. Some arise from elementary symmetries of
the patterns, some from isomorphisms of the associated posets, and
some from two posets differing in ways that leave the number of linear
extensions invariant for trivial reasons. However, we also find pairs
of patterns with identical packing sequences despite having
nonisomorphic posets with visibly different structures, including
examples in which one Hasse diagram is a tree and the other contains
cycles, and even such pairs of patterns of different lengths and
overlaps. Such examples indicate that the constant-overlap setting has
a substantial equivalence theory that is not yet fully understood.

Wilf equivalences have been much studied for pattern avoidance.  In
our setting it seems to be much richer, since such equinumerations are
not restricted to patterns of the same length and the same overlap.
We present several such cases that are easily proved, but those only
seem to scratch the surface.

Some of the results presented here were presented in our extended
abstract~\cite{BKS-FPSAC-2023} for the conference {\sc{fpsac}} in
2023.

In a forthcoming companion paper~\cite{BKS-patt-prob}, we study the sequences
arising here as point processes and moment sequences. Using probabilistic
methods, we derive integral and matrix representations for important overlap-$1$
and overlap-$2$ families. We further conjecture, and for many families are able
to prove, that the pattern-packing \emph{uniformly} gives rise to moments of
probability measures on the real line. This not only identifies natural new
connections between permutation patterns and probability theory, but also opens
up more general approaches in a field of combinatorics where structure remains
pattern-dependent.

The paper is organized as follows. Section~\ref{sec-defs} formally
defines the relevant objects and establishes the notation. In
Section~\ref{sec-symmetries} we establish symmetries of the packing
numbers and discuss their relation to consecutive Wilf
equivalence. Section~\ref{sec-gf} contains the coefficient-extraction
formula for overlap one and the recurrence for overlaps at most half
the pattern length.  Section~\ref{sec-posets} introduces the
pattern-packing posets and develops their structural theory, including
the core-and-branch decomposition, the tree and chain criteria, and
the shifted-grid construction.  In Section~\ref{sec-vary-olap} we
mention one case of permutations covered by irregularly spaced
occurrences of a pattern, with overlaps either 1 or 2.  Finally,
Section~\ref{sec-open} presents open problems concerning further Wilf
equivalences, extremal packing numbers, and other structural
properties of pattern-packing posets than those we treat explicitly.

\section{First Definitions and Notation}\label{sec-defs}

As is habitual in combinatorics, we let $[n]:=\{1,\ldots,n\}$ and
denote by $\cls_n$ the set of all bijections $[n]\to[n]$. The set
$\cls_n$ forms a group (\emph{the symmetric group}) under functional
composition~$\circ$, with the identity permutation as the identity
element.

The study of permutation patterns is predominantly combinatorial, with
the group structure rarely playing a role. Results in this area are
typically formulated in the language of words, where $\clsn$ is the
set of permutations of $\{1,2,\ldots,n\}$, that is, the elements of
the symmetric group:

\begin{definition}
  [Permutations as words (the one-line notation)]\label{def-one-line} We
  interpret elements of $\clsn$ as words of length $n$ in the alphabet $[n]$
  formed of distinct letters. We write $\sigma=\sigma_1\ldots\sigma_n$ and refer
  to this as the \emph{one-line notation}. For $\sigma$ a word of length $n$ and
  $\sigma'$ a word of length $n'$, we denote by $\sigma\sigma'$ their
  \emph{concatenation}, that is, the word of length $n+n'$ whose first $n$
  letters are a copy of $\sigma$ and the remaining $n'$ letters are a copy of
  $\sigma'$.
\end{definition}
When talking about patterns and permutations in one-line notation we will
interchangeably talk about their \emph{letters} and \emph{elements}.

When necessary, we mix the two interpretations of permutations. For
example, $\sigma^{-1}_i$ denotes the $i$th letter, in the one-line
notation, of the functional inverse of $\sigma$.

\begin{definition}\label{def-order-iso} Given words $w$ and $w'$ of
  length $\ell$ on the alphabet $[n]$, we say that $w$ and $w'$ are \emph{order
    isomorphic}, denoted $w\sim w'$, if for every $i,j\in [\ell]$, $w_i<w_j$ if
  and only if $w_i'<w_j'$.

\label{def-order-isomorphism}
\end{definition} For example, 4936 is order isomorphic to 2413.

We now consider ways in which two permutations can be compared to one
another by inclusion.

\begin{definition}[Permutation Patterns]\label{def-perm-pattern} Fix
$K$ and $\pi\in\cls_K$. For any $\sigma\in \clsn $, where $n\geq K$,
we say that a tuple of indices $(i_1,\ldots,i_K)$, with $1\leq
i_1<i_2<\cdots<i_K\leq n$, forms an \emph{occurrence} of the
\emph{classical permutation pattern $\pi$} if the word formed by
restricting $\sigma$ to indices $i_1,\ldots,i_K$ is isomorphic to
$\pi$, that is, $\sigma_{i_1}\ldots\sigma_{i_K}\sim\pi$.

In contrast, for $(i_1,\ldots,i_K)$ to form an occurrence in $\sigma$
of the \emph{consecutive permutation pattern} $\pi$, the indices must
be consecutive, that is, $i_K=i_{K-1}+1=\cdots=i_1+K-1$, so that the
occurrence is a contiguous subword of $\sigma$.

The indices $i_1,\ldots,i_K$ are referred to as the \emph{positions}
or \emph{places} of the occurrence, with $i_1$ being the
\emph{starting position}.  \defends
\end{definition}

\begin{remark}\label{rem-vinc-patts} There is a more general inclusion
rule that interpolates between the classical and consecutive
patterns. These are the \emph{vincular patterns}~\cite{genpatt-survey}
(previously called \emph{generalized patterns}), which we don't
consider here.
\end{remark}

When a permutation $\sigma$ contains no occurrences of a pattern
$\pi$, we say that $\sigma$ \emph{avoids $\pi$} (or \emph{is an
avoider of $\pi$}). A central question in the study of permutation
patterns is this: Given a pattern $\pi$, how many permutations
$\sigma\in\cls_n$ avoid $\pi$ (for any given $n$)?  Here, instead, we
count permutations with occurrences of a given consecutive pattern
appearing at regular, overlapping intervals.

\subsection{Main objects}

Henceforth, 'pattern' will refer to a consecutive permutation
pattern. Furthermore, as the size of the pattern will recur in the
indexation and enumeration of various quantities, we assign it a
capital letter and make it an explicit definition:

\begin{definition}\label{def-k-patt} A \emph{$K$-pattern} is a
consecutive permutation pattern $\pi\in \cls_K$.
\end{definition}

The following is the main definition of this paper.
\begin{definition}[Pattern packing]\label{def-packing} Fix $K\ge2$,
$j\ge1$, $\pi$ a $K$-pattern, and $1\le m<K$. We say that a
permutation $\sigma\in\clsn$ \emph{$m$-packs $j$ occurrences of $\pi$}
or that $\sigma$ \emph{$(m,j)$-packs $\pi$} if $n=K+(j-1)(K-m)$ and
$\sigma$ contains occurrences of $\pi$ starting in positions
$\{1,1+d,1+2d,\ldots,1+(j-1)d\}$, where $d=K-m$. In other words,
\begin{equation}\label{def-stack-1}
\sigma_{1+(i-1)d}\ldots\sigma_{K+(i-1)d}\;\sim\;\pi,
\end{equation} where $\sim$ is order isomorphism and $i$ runs from 1
to $j$.  We also say that $\pi$ \emph{covers $\sigma$ with overlaps
$m$}, or that \emph{$\sigma$ is $m$-covered by $\pi$}, when the number
of occurrences need not be specified.
 \end{definition}
 
\begin{remark}\label{remark-autocorrelation}
  Definition~\ref{def-packing} does not preclude additional occurrences, outside
  of the regularly occurring ones.  For example, the permutation 192837465
  1-packs 2 occurrences of 15243 since it contains occurrences of 15243 starting
  in positions 1 and 5. But it also contains an occurrence of 15243 starting in
  position 3, which we do not prohibit. Whether it is possible to have
  additional occurrences of a pattern depends on the length of the pattern
  $\pi$, the length of the overlaps and the \emph{autocorrelation} of $\pi$,
  that is, the maximum possible overlap of two distinct occurrences of $\pi$ in
  a permutation.  In order for a $K$-pattern~$\pi$ to possibly occur in more
  places in a permutation than those prescribed by overlaps $m\geq 1$, $\pi$
  must have autocorrelation at least $(K+m)/2$. In the remainder of the paper,
  an 'occurrence' will always refer to one of the \emph{regular} occurrences,
  that is, those starting in positions $\{1,1+d,1+2d,\ldots,1+(j-1)d\}$, where
  $d=K-m$.
\end{remark}

\begin{remark}\label{rem-allow-gaps} The Main Definition focuses on
the case where the regular occurrences overlap (that is, $1\le m<K$).
However, allowing regular gaps between occurrences (which we can
interpret as letting $m<0$), or having occurrences adjacent without
overlap ($m=0$), simply adds trivially computed multiplicative factors
to any enumerative formula.
\end{remark}

\subsection{Symmetries and Wilf equivalences}

In pattern \emph{avoidance}, two patterns $\pi$ and $\pi'$ are said to
be \emph{Wilf equivalent} if, for all $n$, the subset of $\clsn$ that
avoids $\pi$ has the same size as that avoiding $\pi'$.  Some such
equivalences are proved simply by applying the same symmetry operation
on both a pattern and the permutations avoiding it, such as reversing
both, or taking complements, which involves replacing the letter $i$
in a permutation/pattern of length $n$ by $n+1-i$.

Both of the above mentioned operations also preserve enumeration in
the case of the pattern packing studied here, but there are a few
others that apply to all patterns given a fixed overlap.  For example,
if the overlap $m$ is at most $K/2$ for a $K$-pattern then
pattern-packing enumeration is invariant under the action of switching
the $m$-suffix and $m$-prefix (Lemma~\ref{lem-pref-suff-wilf}), as
well as permuting the $m$-prefix and $m$-suffix of a $K$-pattern in
the same way (Lemma~\ref{lem-overlap-pref-suff}).  In addition to
these, and some further general symmetries listed in
Section~\ref{sec-symmetries}, there seem to be myriad cases of
pattern/overlap combinations with the same enumeration without falling
under any of these general symmetries.  Such equinumerations occur
both when we fix pattern length and overlap length and for pairs of
patterns of different lengths and with different overlaps, whereas for
avoidance of single patterns, Wilf equivalence is only possible for
patterns of the same length.

In Section~\ref{sec-open} we mention several conjectured Wilf
equivalences that do not fall under any of the general symmetries we
exhibit in Section~\ref{sec-symmetries}.

\subsection{Pattern-packing posets and enumerating their linear
extensions}

In Section~\ref{sec-posets}, we show that the inequalities satisfied
by the elements of a permutation $m$-packing a pattern $\pi$ give rise
to a poset, which we term a \emph{pattern-packing poset} (not to be
confused with the posets in \cite{Elizalde-posets} and earlier papers
cited there). These posets have previously been described for certain
classes of patterns by Dotsenko and
Khoroshkin~\cite{Dotsenko-Khoroshkin-2013} and by Elizalde and
Noy~\cite{Elizalde-Noy-2012}.  While the structure of such a poset
depends on the underlying pattern, the pattern-packing posets enjoy
some general properties. For example, an element in such a poset can
cover (and be covered by) at most two elements. Furthermore, due to
the regularity of the underlying construction
(Definition~\ref{def-packing}), these posets themselves exhibit
regularity.  As a result, pattern-packing posets can be classified
according to the structure of their Hasse diagrams. In particular, we
characterize patterns that give rise to trees
(Theorem~\ref{thm-posets-trees}), and construct patterns realizing
multilevel grids (Theorem~\ref{thm-shifted-grid}).

For a given pattern $\pi$, the number of permutations $m$-packing $j$
occurrences of $\pi$ equals the number of linear extensions of a poset
defined in terms of $\pi$, $m$ and $j$. The regularity of the
pattern-packing posets enables us in some cases to explicitly count
their linear extensions, developing methods that we present in
Section~\ref{sec-posets}. The enumerative results take the form of:
\begin{itemize}
\item A polynomial-time algorithm, for any overlap $m\le
K/2$. (Theorem~\ref{thm-main-formula}.)
\item A generating function, whenever
$m=1$. (Theorem~\ref{thm-gf-1-packing}.)
\item Explicit enumeration formulas for specific families.
\item Various integral formulas and matrix product formulas for
general classes of patterns (to appear in our forthcoming companion
paper~\cite{BKS-patt-prob} to this one).

\end{itemize}

In Figures~\ref{fig-posets-zoo-1} and~\ref{fig-posets-zoo-2}, we show
examples of posets for a variety of patterns and overlaps, and in
Table~\ref{table-contain} we list the enumeration of all pattern-overlap combinations
for patterns of length at most 5, up to the trivial equivalences given
in Section~\ref{sec-symmetries}.  All of these where a formula is
known are special cases of more general results, to appear in
Section~\ref{sec-posets}.


\begin{figure}[p]
\centering
\vspace*{-13ex}
\def\rankstep{0.36cm}
\def\plabelfont{\ttfamily\scriptsize}
\noindent%
\makebox[0.2000\linewidth][c]{
\begin{tikzpicture}[x=\rankstep, y=\rankstep]
  \node[rdot] (4) at ( -0.02, 17.12) {};
  \node[rdot] (5) at ( -0.02, 16.00) {};
  \node[odot] (9) at ( -0.02, 15.00) {};
  \node[gdot] (10) at (  1.30, 14.00) {};
  \node[rdot] (3) at ( -1.48, 13.00) {};
  \node[gdot] (14) at (  1.51, 13.00) {};
  \node[odot] (8) at ( -1.48, 12.00) {};
  \node[dot] (15) at (  1.51, 12.00) {};
  \node[dot] (19) at (  1.51, 11.00) {};
  \node[gdot] (13) at ( -1.48, 10.00) {};
  \node[dot] (20) at (  1.51, 10.00) {};
  \node[dot] (24) at (  1.30,  9.00) {};
  \node[dot] (18) at ( -0.02,  8.00) {};
  \node[dot] (23) at ( -0.02,  7.00) {};
  \node[dot] (22) at ( -0.02,  6.00) {};
  \node[dot] (17) at ( -0.02,  5.00) {};
  \node[gdot] (12) at ( -1.51,  4.00) {};
  \node[dot] (21) at (  1.47,  4.00) {};
  \node[odot] (7) at ( -1.51,  3.00) {};
  \node[dot] (16) at (  1.47,  3.00) {};
  \node[rdot] (2) at ( -1.51,  2.00) {};
  \node[gdot] (11) at (  1.47,  2.00) {};
  \node[odot] (6) at ( -0.02,  1.00) {};
  \node[rdot] (1) at ( -0.02,  0.00) {};

  \draw[edge] (6) -- (1);
  \draw[edge] (7) -- (2);
  \draw[edge] (9) -- (3);
  \draw[edge] (4) -- (5);
  \draw[edge] (2) -- (6);
  \draw[edge] (11) -- (6);
  \draw[edge] (12) -- (7);
  \draw[edge] (3) -- (8);
  \draw[edge] (14) -- (8);
  \draw[edge] (5) -- (9);
  \draw[edge] (9) -- (10);
  \draw[edge] (7) -- (11);
  \draw[edge] (16) -- (11);
  \draw[edge] (17) -- (12);
  \draw[edge] (8) -- (13);
  \draw[edge] (19) -- (13);
  \draw[edge] (10) -- (14);
  \draw[edge] (14) -- (15);
  \draw[edge] (12) -- (16);
  \draw[edge] (21) -- (16);
  \draw[edge] (22) -- (17);
  \draw[edge] (13) -- (18);
  \draw[edge] (24) -- (18);
  \draw[edge] (15) -- (19);
  \draw[edge] (19) -- (20);
  \draw[edge] (17) -- (21);
  \draw[edge] (23) -- (22);
  \draw[edge] (18) -- (23);
  \draw[edge] (20) -- (24);

  \node[plabel] at (0,0) {136982457-4};
\end{tikzpicture}%
}%
\makebox[0.2000\linewidth][c]{
\begin{tikzpicture}[x=\rankstep, y=\rankstep]
  \node[dot] (30) at (  0.01, 23.12) {};
  \node[dot] (28) at (  0.01, 22.00) {};
  \node[dot] (27) at (  0.01, 21.00) {};
  \node[dot] (26) at (  0.01, 20.00) {};
  \node[dot] (29) at (  0.01, 19.00) {};
  \node[dot] (23) at (  0.01, 18.00) {};
  \node[dot] (21) at ( -2.01, 17.00) {};
  \node[dot] (25) at (  2.01, 17.00) {};
  \node[dot] (20) at ( -2.01, 16.00) {};
  \node[dot] (19) at ( -2.01, 15.00) {};
  \node[dot] (24) at (  2.01, 15.00) {};
  \node[dot] (22) at (  0.01, 14.00) {};
  \node[gdot] (16) at (  0.01, 13.00) {};
  \node[gdot] (14) at ( -2.01, 12.00) {};
  \node[dot] (18) at (  2.01, 12.00) {};
  \node[gdot] (13) at ( -2.01, 11.00) {};
  \node[gdot] (12) at ( -2.01, 10.00) {};
  \node[dot] (17) at (  2.01, 10.00) {};
  \node[gdot] (15) at (  0.01,  9.00) {};
  \node[odot] (9) at (  0.01,  8.00) {};
  \node[rdot] (7) at ( -2.01,  7.00) {};
  \node[gdot] (11) at (  2.01,  7.00) {};
  \node[rdot] (6) at ( -2.01,  6.00) {};
  \node[rdot] (5) at ( -2.01,  5.00) {};
  \node[gdot] (10) at (  2.01,  5.00) {};
  \node[odot] (8) at (  0.01,  4.00) {};
  \node[rdot] (2) at (  0.01,  3.00) {};
  \node[rdot] (4) at (  0.01,  2.00) {};
  \node[rdot] (3) at (  0.01,  1.00) {};
  \node[rdot] (1) at (  0.01,  0.00) {};

  \draw[edge] (3) -- (1);
  \draw[edge] (8) -- (2);
  \draw[edge] (4) -- (3);
  \draw[edge] (2) -- (4);
  \draw[edge] (6) -- (5);
  \draw[edge] (7) -- (6);
  \draw[edge] (9) -- (7);
  \draw[edge] (5) -- (8);
  \draw[edge] (10) -- (8);
  \draw[edge] (15) -- (9);
  \draw[edge] (11) -- (10);
  \draw[edge] (9) -- (11);
  \draw[edge] (13) -- (12);
  \draw[edge] (14) -- (13);
  \draw[edge] (16) -- (14);
  \draw[edge] (12) -- (15);
  \draw[edge] (17) -- (15);
  \draw[edge] (22) -- (16);
  \draw[edge] (18) -- (17);
  \draw[edge] (16) -- (18);
  \draw[edge] (20) -- (19);
  \draw[edge] (21) -- (20);
  \draw[edge] (23) -- (21);
  \draw[edge] (19) -- (22);
  \draw[edge] (24) -- (22);
  \draw[edge] (29) -- (23);
  \draw[edge] (25) -- (24);
  \draw[edge] (23) -- (25);
  \draw[edge] (27) -- (26);
  \draw[edge] (28) -- (27);
  \draw[edge] (30) -- (28);
  \draw[edge] (26) -- (29);

  \node[plabel] at (0,0) {142367859-2};
\end{tikzpicture}%
}%
\makebox[0.2000\linewidth][c]{
\begin{tikzpicture}[x=\rankstep, y=\rankstep]
  \node[dot] (35) at ( -0.10, 28.12) {};
  \node[dot] (32) at ( -0.10, 27.00) {};
  \node[dot] (34) at ( -0.10, 26.00) {};
  \node[dot] (31) at ( -0.10, 25.00) {};
  \node[dot] (30) at ( -0.10, 24.00) {};
  \node[dot] (27) at ( -0.10, 23.00) {};
  \node[dot] (33) at (  1.25, 22.00) {};
  \node[dot] (24) at ( -1.58, 21.00) {};
  \node[dot] (29) at (  1.46, 21.00) {};
  \node[dot] (26) at ( -0.06, 20.00) {};
  \node[dot] (23) at ( -1.21, 19.00) {};
  \node[dot] (22) at ( -1.54, 18.00) {};
  \node[dot] (28) at (  1.50, 18.00) {};
  \node[gdot] (19) at ( -1.54, 17.00) {};
  \node[dot] (25) at (  1.50, 16.00) {};
  \node[gdot] (16) at ( -1.54, 15.00) {};
  \node[dot] (21) at (  1.50, 15.00) {};
  \node[gdot] (18) at ( -0.01, 14.00) {};
  \node[gdot] (15) at ( -1.17, 13.00) {};
  \node[gdot] (14) at ( -1.50, 12.00) {};
  \node[dot] (20) at (  1.54, 12.00) {};
  \node[odot] (11) at ( -1.50, 11.00) {};
  \node[gdot] (17) at (  1.54, 10.00) {};
  \node[rdot] (8) at ( -1.50,  9.00) {};
  \node[gdot] (13) at (  1.54,  9.00) {};
  \node[odot] (10) at (  0.03,  8.00) {};
  \node[rdot] (7) at ( -1.13,  7.00) {};
  \node[rdot] (6) at ( -1.46,  6.00) {};
  \node[gdot] (12) at (  1.58,  6.00) {};
  \node[rdot] (3) at ( -1.13,  5.00) {};
  \node[odot] (9) at (  0.03,  4.00) {};
  \node[rdot] (5) at (  0.03,  3.00) {};
  \node[rdot] (2) at (  0.03,  2.00) {};
  \node[rdot] (4) at (  0.03,  1.00) {};
  \node[rdot] (1) at (  0.03,  0.00) {};

  \draw[edge] (4) -- (1);
  \draw[edge] (5) -- (2);
  \draw[edge] (6) -- (3);
  \draw[edge] (2) -- (4);
  \draw[edge] (9) -- (5);
  \draw[edge] (7) -- (6);
  \draw[edge] (10) -- (7);
  \draw[edge] (11) -- (8);
  \draw[edge] (3) -- (9);
  \draw[edge] (12) -- (9);
  \draw[edge] (8) -- (10);
  \draw[edge] (13) -- (10);
  \draw[edge] (14) -- (11);
  \draw[edge] (10) -- (12);
  \draw[edge] (17) -- (13);
  \draw[edge] (15) -- (14);
  \draw[edge] (18) -- (15);
  \draw[edge] (19) -- (16);
  \draw[edge] (11) -- (17);
  \draw[edge] (20) -- (17);
  \draw[edge] (16) -- (18);
  \draw[edge] (21) -- (18);
  \draw[edge] (22) -- (19);
  \draw[edge] (18) -- (20);
  \draw[edge] (25) -- (21);
  \draw[edge] (23) -- (22);
  \draw[edge] (26) -- (23);
  \draw[edge] (27) -- (24);
  \draw[edge] (19) -- (25);
  \draw[edge] (28) -- (25);
  \draw[edge] (24) -- (26);
  \draw[edge] (29) -- (26);
  \draw[edge] (30) -- (27);
  \draw[edge] (26) -- (28);
  \draw[edge] (33) -- (29);
  \draw[edge] (31) -- (30);
  \draw[edge] (34) -- (31);
  \draw[edge] (35) -- (32);
  \draw[edge] (27) -- (33);
  \draw[edge] (32) -- (34);

  \node[plabel] at (0,0) {1\,3\,6\,2\,4\,7\,8\,10\,5\,9\,11-3};
\end{tikzpicture}%
}%
\makebox[0.2000\linewidth][c]{
\begin{tikzpicture}[x=\rankstep, y=\rankstep]
  \node[dot] (25) at (  0.01, 22.12) {};
  \node[dot] (24) at (  0.01, 21.00) {};
  \node[dot] (20) at (  0.01, 20.00) {};
  \node[dot] (23) at (  0.01, 19.00) {};
  \node[dot] (26) at (  0.01, 18.00) {};
  \node[dot] (19) at (  0.01, 17.00) {};
  \node[gdot] (15) at ( -1.22, 16.00) {};
  \node[dot] (22) at (  1.22, 16.00) {};
  \node[dot] (18) at (  0.01, 15.00) {};
  \node[dot] (21) at (  0.01, 14.00) {};
  \node[gdot] (14) at (  0.01, 13.00) {};
  \node[odot] (10) at ( -1.22, 12.00) {};
  \node[dot] (17) at (  1.22, 12.00) {};
  \node[gdot] (13) at (  0.01, 11.00) {};
  \node[gdot] (16) at (  0.01, 10.00) {};
  \node[odot] (9) at (  0.01,  9.00) {};
  \node[rdot] (5) at ( -1.22,  8.00) {};
  \node[gdot] (12) at (  1.22,  8.00) {};
  \node[odot] (8) at (  0.01,  7.00) {};
  \node[odot] (11) at (  0.01,  6.00) {};
  \node[rdot] (4) at (  0.01,  5.00) {};
  \node[odot] (7) at (  0.01,  4.00) {};
  \node[rdot] (3) at (  0.01,  3.00) {};
  \node[odot] (6) at (  0.01,  2.00) {};
  \node[rdot] (2) at (  0.01,  1.00) {};
  \node[rdot] (1) at (  0.01,  0.00) {};

  \draw[edge] (2) -- (1);
  \draw[edge] (6) -- (2);
  \draw[edge] (7) -- (3);
  \draw[edge] (11) -- (4);
  \draw[edge] (9) -- (5);
  \draw[edge] (3) -- (6);
  \draw[edge] (4) -- (7);
  \draw[edge] (5) -- (8);
  \draw[edge] (12) -- (8);
  \draw[edge] (16) -- (9);
  \draw[edge] (14) -- (10);
  \draw[edge] (8) -- (11);
  \draw[edge] (9) -- (12);
  \draw[edge] (10) -- (13);
  \draw[edge] (17) -- (13);
  \draw[edge] (21) -- (14);
  \draw[edge] (19) -- (15);
  \draw[edge] (13) -- (16);
  \draw[edge] (14) -- (17);
  \draw[edge] (15) -- (18);
  \draw[edge] (22) -- (18);
  \draw[edge] (26) -- (19);
  \draw[edge] (24) -- (20);
  \draw[edge] (18) -- (21);
  \draw[edge] (19) -- (22);
  \draw[edge] (20) -- (23);
  \draw[edge] (25) -- (24);
  \draw[edge] (23) -- (26);

  \node[plabel] at (0,0) {1\,2\,4\,6\,9\,3\,5\,8\,10\,11\,7-6};
\end{tikzpicture}%
}%
\makebox[0.2000\linewidth][c]{
\begin{tikzpicture}[x=\rankstep, y=\rankstep]
  \node[dot] (25) at ( -0.04, 19.12) {};
  \node[dot] (20) at ( -0.04, 18.00) {};
  \node[dot] (24) at (  0.84, 17.00) {};
  \node[gdot] (15) at ( -1.01, 16.00) {};
  \node[dot] (26) at (  0.98, 16.00) {};
  \node[dot] (19) at (  0.98, 15.00) {};
  \node[odot] (10) at ( -1.01, 14.00) {};
  \node[dot] (21) at (  0.98, 14.00) {};
  \node[gdot] (14) at ( -0.23, 13.00) {};
  \node[dot] (23) at (  1.76, 13.00) {};
  \node[rdot] (5) at ( -1.76, 12.00) {};
  \node[gdot] (16) at (  0.23, 12.00) {};
  \node[odot] (9) at ( -1.76, 11.00) {};
  \node[dot] (22) at (  0.23, 11.00) {};
  \node[dot] (18) at (  0.10, 10.00) {};
  \node[odot] (11) at ( -0.79,  9.00) {};
  \node[dot] (17) at (  0.10,  8.00) {};
  \node[rdot] (4) at ( -1.76,  7.00) {};
  \node[gdot] (13) at (  0.23,  7.00) {};
  \node[odot] (6) at ( -0.76,  6.00) {};
  \node[gdot] (12) at ( -0.76,  5.00) {};
  \node[odot] (8) at ( -0.76,  4.00) {};
  \node[rdot] (1) at ( -0.76,  3.00) {};
  \node[odot] (7) at ( -0.76,  2.00) {};
  \node[rdot] (3) at ( -0.76,  1.00) {};
  \node[rdot] (2) at ( -0.76,  0.00) {};

  \draw[edge] (8) -- (1);
  \draw[edge] (3) -- (2);
  \draw[edge] (7) -- (3);
  \draw[edge] (11) -- (4);
  \draw[edge] (10) -- (5);
  \draw[edge] (4) -- (6);
  \draw[edge] (13) -- (6);
  \draw[edge] (1) -- (7);
  \draw[edge] (12) -- (8);
  \draw[edge] (5) -- (9);
  \draw[edge] (16) -- (9);
  \draw[edge] (15) -- (10);
  \draw[edge] (9) -- (11);
  \draw[edge] (18) -- (11);
  \draw[edge] (6) -- (12);
  \draw[edge] (17) -- (13);
  \draw[edge] (10) -- (14);
  \draw[edge] (21) -- (14);
  \draw[edge] (20) -- (15);
  \draw[edge] (14) -- (16);
  \draw[edge] (23) -- (16);
  \draw[edge] (11) -- (17);
  \draw[edge] (22) -- (18);
  \draw[edge] (15) -- (19);
  \draw[edge] (26) -- (19);
  \draw[edge] (25) -- (20);
  \draw[edge] (19) -- (21);
  \draw[edge] (16) -- (22);
  \draw[edge] (21) -- (23);
  \draw[edge] (20) -- (24);
  \draw[edge] (24) -- (26);

  \node[plabel] at (0,0) {4\,1\,2\,7\,10\,6\,3\,5\,9\,11\,8-6};
\end{tikzpicture}%
}%
\par\vspace*{1.5ex}

\noindent%
\makebox[0.2000\linewidth][c]{
\begin{tikzpicture}[x=\rankstep, y=\rankstep]
  \node[dot] (20) at ( -0.00, 15.12) {};
  \node[dot] (16) at ( -0.00, 14.00) {};
  \node[gdot] (12) at ( -0.90, 13.00) {};
  \node[dot] (22) at (  0.90, 13.00) {};
  \node[dot] (18) at (  0.90, 12.00) {};
  \node[odot] (8) at ( -0.90, 11.00) {};
  \node[dot] (21) at (  0.90, 11.00) {};
  \node[gdot] (14) at (  0.90, 10.00) {};
  \node[rdot] (4) at ( -0.90,  9.00) {};
  \node[dot] (17) at (  0.90,  9.00) {};
  \node[odot] (10) at ( -0.90,  8.00) {};
  \node[dot] (19) at (  0.90,  8.00) {};
  \node[gdot] (13) at ( -0.00,  7.00) {};
  \node[odot] (6) at ( -0.90,  6.00) {};
  \node[dot] (15) at (  0.90,  6.00) {};
  \node[odot] (9) at ( -0.00,  5.00) {};
  \node[rdot] (2) at ( -0.90,  4.00) {};
  \node[gdot] (11) at (  0.90,  4.00) {};
  \node[odot] (5) at ( -0.00,  3.00) {};
  \node[odot] (7) at ( -0.00,  2.00) {};
  \node[rdot] (1) at ( -0.00,  1.00) {};
  \node[rdot] (3) at ( -0.00,  0.00) {};

  \draw[edge] (7) -- (1);
  \draw[edge] (9) -- (2);
  \draw[edge] (1) -- (3);
  \draw[edge] (8) -- (4);
  \draw[edge] (2) -- (5);
  \draw[edge] (11) -- (5);
  \draw[edge] (13) -- (6);
  \draw[edge] (5) -- (7);
  \draw[edge] (12) -- (8);
  \draw[edge] (6) -- (9);
  \draw[edge] (15) -- (9);
  \draw[edge] (4) -- (10);
  \draw[edge] (17) -- (10);
  \draw[edge] (9) -- (11);
  \draw[edge] (16) -- (12);
  \draw[edge] (10) -- (13);
  \draw[edge] (19) -- (13);
  \draw[edge] (8) -- (14);
  \draw[edge] (21) -- (14);
  \draw[edge] (13) -- (15);
  \draw[edge] (20) -- (16);
  \draw[edge] (14) -- (17);
  \draw[edge] (12) -- (18);
  \draw[edge] (22) -- (18);
  \draw[edge] (17) -- (19);
  \draw[edge] (18) -- (21);
  \draw[edge] (16) -- (22);

  \node[plabel] at (0,0) {2\,5\,1\,9\,4\,7\,3\,10\,6\,8-6};
\end{tikzpicture}%
}%
\makebox[0.2000\linewidth][c]{
\begin{tikzpicture}[x=\rankstep, y=\rankstep]
  \node[rdot] (8) at ( -2.25, 25.12) {};
  \node[rdot] (3) at ( -2.25, 24.00) {};
  \node[rdot] (7) at ( -2.25, 23.00) {};
  \node[gdot] (16) at (  0.00, 23.00) {};
  \node[odot] (11) at ( -1.12, 22.00) {};
  \node[gdot] (15) at ( -1.12, 21.00) {};
  \node[dot] (24) at (  1.12, 21.00) {};
  \node[gdot] (19) at (  0.00, 20.00) {};
  \node[dot] (23) at (  0.00, 19.00) {};
  \node[dot] (32) at (  2.25, 19.00) {};
  \node[dot] (27) at (  1.12, 18.00) {};
  \node[dot] (31) at (  1.12, 17.00) {};
  \node[dot] (35) at (  1.12, 16.00) {};
  \node[dot] (34) at (  1.12, 15.00) {};
  \node[dot] (30) at (  1.12, 14.00) {};
  \node[dot] (33) at (  1.12, 13.00) {};
  \node[dot] (26) at (  1.12, 12.00) {};
  \node[dot] (22) at (  0.00, 11.00) {};
  \node[dot] (29) at (  2.25, 11.00) {};
  \node[dot] (25) at (  1.12, 10.00) {};
  \node[gdot] (18) at (  0.00,  9.00) {};
  \node[dot] (28) at (  2.25,  9.00) {};
  \node[gdot] (14) at ( -1.12,  8.00) {};
  \node[dot] (21) at (  1.12,  8.00) {};
  \node[gdot] (17) at (  0.00,  7.00) {};
  \node[odot] (10) at ( -1.12,  6.00) {};
  \node[dot] (20) at (  1.12,  6.00) {};
  \node[rdot] (6) at ( -2.25,  5.00) {};
  \node[gdot] (13) at (  0.00,  5.00) {};
  \node[odot] (9) at ( -1.12,  4.00) {};
  \node[rdot] (2) at ( -2.25,  3.00) {};
  \node[gdot] (12) at (  0.00,  3.00) {};
  \node[rdot] (5) at ( -2.25,  2.00) {};
  \node[rdot] (1) at ( -2.25,  1.00) {};
  \node[rdot] (4) at ( -2.25,  0.00) {};

  \draw[edge] (5) -- (1);
  \draw[edge] (9) -- (2);
  \draw[edge] (8) -- (3);
  \draw[edge] (1) -- (4);
  \draw[edge] (2) -- (5);
  \draw[edge] (10) -- (6);
  \draw[edge] (3) -- (7);
  \draw[edge] (6) -- (9);
  \draw[edge] (13) -- (9);
  \draw[edge] (17) -- (10);
  \draw[edge] (7) -- (11);
  \draw[edge] (16) -- (11);
  \draw[edge] (9) -- (12);
  \draw[edge] (10) -- (13);
  \draw[edge] (18) -- (14);
  \draw[edge] (11) -- (15);
  \draw[edge] (14) -- (17);
  \draw[edge] (21) -- (17);
  \draw[edge] (25) -- (18);
  \draw[edge] (15) -- (19);
  \draw[edge] (24) -- (19);
  \draw[edge] (17) -- (20);
  \draw[edge] (18) -- (21);
  \draw[edge] (26) -- (22);
  \draw[edge] (19) -- (23);
  \draw[edge] (22) -- (25);
  \draw[edge] (29) -- (25);
  \draw[edge] (33) -- (26);
  \draw[edge] (23) -- (27);
  \draw[edge] (32) -- (27);
  \draw[edge] (25) -- (28);
  \draw[edge] (26) -- (29);
  \draw[edge] (34) -- (30);
  \draw[edge] (27) -- (31);
  \draw[edge] (30) -- (33);
  \draw[edge] (35) -- (34);
  \draw[edge] (31) -- (35);

  \node[plabel] at (0,0) {2\,4\,10\,1\,3\,6\,9\,11\,5\,7\,8-3};
\end{tikzpicture}%
}%
\makebox[0.2000\linewidth][c]{
\begin{tikzpicture}[x=\rankstep, y=\rankstep]
  \node[rdot] (8) at ( -2.69, 19.12) {};
  \node[rdot] (7) at ( -2.69, 18.00) {};
  \node[gdot] (16) at ( -1.29, 18.00) {};
  \node[rdot] (3) at ( -2.69, 17.00) {};
  \node[gdot] (15) at ( -1.29, 17.00) {};
  \node[dot] (24) at (  0.11, 17.00) {};
  \node[odot] (11) at ( -1.29, 16.00) {};
  \node[dot] (23) at (  0.11, 16.00) {};
  \node[dot] (32) at (  1.53, 16.00) {};
  \node[gdot] (19) at (  0.11, 15.00) {};
  \node[dot] (31) at (  1.53, 15.00) {};
  \node[dot] (27) at (  0.82, 14.00) {};
  \node[dot] (35) at (  0.82, 13.00) {};
  \node[dot] (34) at (  0.82, 12.00) {};
  \node[dot] (30) at (  0.82, 11.00) {};
  \node[dot] (26) at (  0.82, 10.00) {};
  \node[dot] (22) at (  0.29,  9.00) {};
  \node[gdot] (18) at (  0.14,  8.00) {};
  \node[dot] (33) at (  1.54,  8.00) {};
  \node[gdot] (14) at (  0.14,  7.00) {};
  \node[dot] (25) at (  1.54,  7.00) {};
  \node[odot] (10) at (  0.14,  6.00) {};
  \node[dot] (29) at (  2.60,  6.00) {};
  \node[rdot] (6) at ( -0.11,  5.00) {};
  \node[gdot] (17) at (  1.29,  5.00) {};
  \node[dot] (28) at (  2.69,  5.00) {};
  \node[rdot] (2) at ( -0.11,  4.00) {};
  \node[dot] (21) at (  2.03,  4.00) {};
  \node[odot] (9) at (  0.25,  3.00) {};
  \node[dot] (20) at (  2.03,  3.00) {};
  \node[rdot] (1) at ( -0.44,  2.00) {};
  \node[gdot] (13) at (  0.96,  2.00) {};
  \node[rdot] (5) at ( -0.44,  1.00) {};
  \node[gdot] (12) at (  0.96,  1.00) {};
  \node[rdot] (4) at ( -0.44,  0.00) {};

  \draw[edge] (9) -- (1);
  \draw[edge] (6) -- (2);
  \draw[edge] (7) -- (3);
  \draw[edge] (5) -- (4);
  \draw[edge] (1) -- (5);
  \draw[edge] (10) -- (6);
  \draw[edge] (8) -- (7);
  \draw[edge] (2) -- (9);
  \draw[edge] (17) -- (9);
  \draw[edge] (14) -- (10);
  \draw[edge] (3) -- (11);
  \draw[edge] (15) -- (11);
  \draw[edge] (13) -- (12);
  \draw[edge] (9) -- (13);
  \draw[edge] (18) -- (14);
  \draw[edge] (16) -- (15);
  \draw[edge] (10) -- (17);
  \draw[edge] (25) -- (17);
  \draw[edge] (22) -- (18);
  \draw[edge] (11) -- (19);
  \draw[edge] (23) -- (19);
  \draw[edge] (21) -- (20);
  \draw[edge] (17) -- (21);
  \draw[edge] (26) -- (22);
  \draw[edge] (24) -- (23);
  \draw[edge] (18) -- (25);
  \draw[edge] (33) -- (25);
  \draw[edge] (30) -- (26);
  \draw[edge] (19) -- (27);
  \draw[edge] (31) -- (27);
  \draw[edge] (29) -- (28);
  \draw[edge] (25) -- (29);
  \draw[edge] (34) -- (30);
  \draw[edge] (32) -- (31);
  \draw[edge] (26) -- (33);
  \draw[edge] (35) -- (34);
  \draw[edge] (27) -- (35);

  \node[plabel] at (0,0) {3\,5\,9\,1\,2\,6\,10\,11\,4\,7\,8-3};
\end{tikzpicture}%
}%
\makebox[0.2000\linewidth][c]{
\begin{tikzpicture}[x=\rankstep, y=\rankstep]
  \node[dot] (25) at ( -0.33, 16.12) {};
  \node[dot] (20) at ( -0.33, 15.00) {};
  \node[gdot] (15) at ( -1.14, 14.00) {};
  \node[dot] (24) at (  0.47, 14.00) {};
  \node[dot] (19) at (  0.26, 13.00) {};
  \node[odot] (10) at ( -1.14, 12.00) {};
  \node[dot] (23) at (  1.50, 12.00) {};
  \node[rdot] (5) at ( -1.61, 11.00) {};
  \node[gdot] (14) at (  0.00, 11.00) {};
  \node[dot] (26) at (  1.61, 11.00) {};
  \node[odot] (9) at ( -0.81, 10.00) {};
  \node[dot] (18) at (  1.03, 10.00) {};
  \node[dot] (21) at (  1.03,  9.00) {};
  \node[rdot] (4) at ( -1.06,  8.00) {};
  \node[gdot] (13) at (  0.56,  8.00) {};
  \node[gdot] (16) at (  0.56,  7.00) {};
  \node[odot] (8) at ( -1.06,  6.00) {};
  \node[odot] (11) at ( -1.06,  5.00) {};
  \node[dot] (22) at (  0.56,  5.00) {};
  \node[rdot] (3) at ( -1.06,  4.00) {};
  \node[dot] (17) at (  0.56,  4.00) {};
  \node[odot] (6) at ( -1.06,  3.00) {};
  \node[rdot] (1) at ( -1.06,  2.00) {};
  \node[gdot] (12) at (  0.56,  2.00) {};
  \node[odot] (7) at ( -0.25,  1.00) {};
  \node[rdot] (2) at ( -0.25,  0.00) {};

  \draw[edge] (6) -- (1);
  \draw[edge] (7) -- (2);
  \draw[edge] (11) -- (3);
  \draw[edge] (9) -- (4);
  \draw[edge] (10) -- (5);
  \draw[edge] (3) -- (6);
  \draw[edge] (1) -- (7);
  \draw[edge] (12) -- (7);
  \draw[edge] (4) -- (8);
  \draw[edge] (16) -- (8);
  \draw[edge] (5) -- (9);
  \draw[edge] (14) -- (9);
  \draw[edge] (15) -- (10);
  \draw[edge] (8) -- (11);
  \draw[edge] (6) -- (12);
  \draw[edge] (17) -- (12);
  \draw[edge] (9) -- (13);
  \draw[edge] (21) -- (13);
  \draw[edge] (10) -- (14);
  \draw[edge] (19) -- (14);
  \draw[edge] (20) -- (15);
  \draw[edge] (13) -- (16);
  \draw[edge] (11) -- (17);
  \draw[edge] (22) -- (17);
  \draw[edge] (14) -- (18);
  \draw[edge] (26) -- (18);
  \draw[edge] (15) -- (19);
  \draw[edge] (24) -- (19);
  \draw[edge] (25) -- (20);
  \draw[edge] (18) -- (21);
  \draw[edge] (16) -- (22);
  \draw[edge] (19) -- (23);
  \draw[edge] (20) -- (24);
  \draw[edge] (23) -- (26);

  \node[plabel] at (0,0) {3\,1\,5\,8\,10\,4\,2\,7\,9\,11\,6-6};
\end{tikzpicture}%
}%
\makebox[0.2000\linewidth][c]{
\begin{tikzpicture}[x=\rankstep, y=\rankstep]
  \node[dot] (32) at ( -0.74, 22.12) {};
  \node[dot] (25) at ( -0.74, 21.00) {};
  \node[dot] (31) at (  0.10, 20.00) {};
  \node[dot] (28) at (  0.13, 19.00) {};
  \node[dot] (30) at (  0.19, 18.00) {};
  \node[gdot] (18) at ( -1.77, 17.00) {};
  \node[dot] (27) at (  0.34, 17.00) {};
  \node[dot] (24) at (  0.19, 16.00) {};
  \node[dot] (21) at (  0.08, 15.00) {};
  \node[dot] (26) at (  2.19, 15.00) {};
  \node[dot] (23) at (  1.37, 14.00) {};
  \node[odot] (11) at ( -1.77, 13.00) {};
  \node[dot] (20) at (  0.55, 13.00) {};
  \node[rdot] (4) at ( -2.37, 12.00) {};
  \node[gdot] (17) at (  0.16, 12.00) {};
  \node[gdot] (14) at ( -1.16, 11.00) {};
  \node[dot] (19) at (  0.95, 11.00) {};
  \node[gdot] (16) at ( -0.16, 10.00) {};
  \node[dot] (29) at (  2.37, 10.00) {};
  \node[gdot] (13) at ( -0.37,  9.00) {};
  \node[dot] (22) at (  1.76,  9.00) {};
  \node[odot] (10) at ( -1.30,  8.00) {};
  \node[rdot] (7) at ( -2.10,  7.00) {};
  \node[gdot] (12) at (  0.02,  7.00) {};
  \node[odot] (9) at ( -0.13,  6.00) {};
  \node[rdot] (6) at ( -0.37,  5.00) {};
  \node[gdot] (15) at (  1.76,  5.00) {};
  \node[rdot] (3) at ( -0.13,  4.00) {};
  \node[rdot] (5) at ( -0.01,  3.00) {};
  \node[rdot] (2) at (  0.05,  2.00) {};
  \node[odot] (8) at (  0.84,  1.00) {};
  \node[rdot] (1) at (  0.84,  0.00) {};

  \draw[edge] (8) -- (1);
  \draw[edge] (5) -- (2);
  \draw[edge] (6) -- (3);
  \draw[edge] (11) -- (4);
  \draw[edge] (3) -- (5);
  \draw[edge] (9) -- (6);
  \draw[edge] (10) -- (7);
  \draw[edge] (2) -- (8);
  \draw[edge] (15) -- (8);
  \draw[edge] (7) -- (9);
  \draw[edge] (12) -- (9);
  \draw[edge] (4) -- (10);
  \draw[edge] (13) -- (10);
  \draw[edge] (18) -- (11);
  \draw[edge] (10) -- (12);
  \draw[edge] (16) -- (13);
  \draw[edge] (17) -- (14);
  \draw[edge] (9) -- (15);
  \draw[edge] (22) -- (15);
  \draw[edge] (14) -- (16);
  \draw[edge] (19) -- (16);
  \draw[edge] (11) -- (17);
  \draw[edge] (20) -- (17);
  \draw[edge] (25) -- (18);
  \draw[edge] (17) -- (19);
  \draw[edge] (23) -- (20);
  \draw[edge] (24) -- (21);
  \draw[edge] (16) -- (22);
  \draw[edge] (29) -- (22);
  \draw[edge] (21) -- (23);
  \draw[edge] (26) -- (23);
  \draw[edge] (18) -- (24);
  \draw[edge] (27) -- (24);
  \draw[edge] (32) -- (25);
  \draw[edge] (24) -- (26);
  \draw[edge] (30) -- (27);
  \draw[edge] (31) -- (28);
  \draw[edge] (23) -- (29);
  \draw[edge] (28) -- (30);
  \draw[edge] (25) -- (31);

  \node[plabel] at (0,0) {1\,3\,5\,10\,4\,6\,8\,2\,7\,9\,11-4};
\end{tikzpicture}%
}%
\par

\caption{Posets for 4 occurrences of various patterns, pattern-overlap in box
  under each one. Red nodes are first occurrence, yellow second
  occurrence, circled nodes in both.}
\label{fig-posets-zoo-1}
\end{figure}


\begin{figure}[p]
\centering
\vspace*{-12ex}
\def\rankstep{0.385cm}
\def\plabelfont{\ttfamily\footnotesize}

\noindent%
\makebox[0.3333\linewidth][c]{
\begin{tikzpicture}[x=\rankstep, y=\rankstep]
  \node[rdot] (3) at (  1.73, 12.12) {};
  \node[odot] (7) at (  1.73, 11.00) {};
  \node[gdot] (11) at (  1.73, 10.00) {};
  \node[dot] (15) at (  1.73,  9.00) {};
  \node[dot] (19) at (  1.73,  8.00) {};
  \node[dot] (18) at (  1.73,  7.00) {};
  \node[dot] (14) at (  1.73,  6.00) {};
  \node[gdot] (10) at (  0.58,  5.00) {};
  \node[dot] (16) at (  2.90,  5.00) {};
  \node[odot] (6) at ( -1.73,  4.00) {};
  \node[dot] (12) at (  0.58,  4.00) {};
  \node[dot] (17) at (  2.90,  4.00) {};
  \node[rdot] (2) at ( -2.90,  3.00) {};
  \node[gdot] (8) at ( -0.58,  3.00) {};
  \node[dot] (13) at (  1.73,  3.00) {};
  \node[rdot] (4) at ( -2.90,  2.00) {};
  \node[gdot] (9) at ( -0.58,  2.00) {};
  \node[odot] (5) at ( -1.73,  1.00) {};
  \node[rdot] (1) at ( -1.73,  0.00) {};

  \draw[edge] (5) -- (1);
  \draw[edge] (6) -- (2);
  \draw[edge] (2) -- (4);
  \draw[edge] (4) -- (5);
  \draw[edge] (9) -- (5);
  \draw[edge] (10) -- (6);
  \draw[edge] (3) -- (7);
  \draw[edge] (6) -- (8);
  \draw[edge] (8) -- (9);
  \draw[edge] (13) -- (9);
  \draw[edge] (14) -- (10);
  \draw[edge] (7) -- (11);
  \draw[edge] (10) -- (12);
  \draw[edge] (12) -- (13);
  \draw[edge] (17) -- (13);
  \draw[edge] (18) -- (14);
  \draw[edge] (11) -- (15);
  \draw[edge] (14) -- (16);
  \draw[edge] (16) -- (17);
  \draw[edge] (19) -- (18);
  \draw[edge] (15) -- (19);

  \node[plabel] at (0,0) {1473256-3};
\end{tikzpicture}%
}%
\makebox[0.3333\linewidth][c]{
\begin{tikzpicture}[x=\rankstep, y=\rankstep]
  \node[rdot] (7) at ( -2.62, 18.12) {};
  \node[rdot] (6) at ( -2.62, 17.00) {};
  \node[gdot] (14) at ( -1.19, 17.00) {};
  \node[rdot] (3) at ( -2.62, 16.00) {};
  \node[gdot] (13) at ( -1.19, 16.00) {};
  \node[dot] (21) at (  0.24, 16.00) {};
  \node[odot] (10) at ( -1.19, 15.00) {};
  \node[dot] (20) at (  0.24, 15.00) {};
  \node[dot] (28) at (  1.69, 15.00) {};
  \node[rdot] (2) at ( -1.19, 14.00) {};
  \node[gdot] (17) at (  0.24, 14.00) {};
  \node[dot] (27) at (  1.69, 14.00) {};
  \node[odot] (9) at ( -0.47, 13.00) {};
  \node[dot] (24) at (  1.24, 13.00) {};
  \node[gdot] (16) at (  0.09, 12.00) {};
  \node[dot] (31) at (  1.52, 12.00) {};
  \node[dot] (23) at (  1.20, 11.00) {};
  \node[dot] (30) at (  2.34, 10.00) {};
  \node[rdot] (5) at ( -1.01,  9.00) {};
  \node[dot] (26) at (  2.42,  9.00) {};
  \node[gdot] (12) at (  0.09,  8.00) {};
  \node[dot] (29) at (  2.58,  8.00) {};
  \node[dot] (19) at (  1.19,  7.00) {};
  \node[dot] (25) at (  2.62,  7.00) {};
  \node[dot] (22) at (  1.19,  6.00) {};
  \node[dot] (18) at (  1.19,  5.00) {};
  \node[gdot] (15) at (  0.09,  4.00) {};
  \node[gdot] (11) at (  0.09,  3.00) {};
  \node[odot] (8) at ( -0.47,  2.00) {};
  \node[rdot] (4) at ( -0.47,  1.00) {};
  \node[rdot] (1) at ( -0.47,  0.00) {};

  \draw[edge] (4) -- (1);
  \draw[edge] (10) -- (2);
  \draw[edge] (6) -- (3);
  \draw[edge] (8) -- (4);
  \draw[edge] (9) -- (5);
  \draw[edge] (7) -- (6);
  \draw[edge] (5) -- (8);
  \draw[edge] (11) -- (8);
  \draw[edge] (2) -- (9);
  \draw[edge] (17) -- (9);
  \draw[edge] (3) -- (10);
  \draw[edge] (13) -- (10);
  \draw[edge] (15) -- (11);
  \draw[edge] (16) -- (12);
  \draw[edge] (14) -- (13);
  \draw[edge] (12) -- (15);
  \draw[edge] (18) -- (15);
  \draw[edge] (9) -- (16);
  \draw[edge] (24) -- (16);
  \draw[edge] (10) -- (17);
  \draw[edge] (20) -- (17);
  \draw[edge] (22) -- (18);
  \draw[edge] (23) -- (19);
  \draw[edge] (21) -- (20);
  \draw[edge] (19) -- (22);
  \draw[edge] (25) -- (22);
  \draw[edge] (16) -- (23);
  \draw[edge] (31) -- (23);
  \draw[edge] (17) -- (24);
  \draw[edge] (27) -- (24);
  \draw[edge] (29) -- (25);
  \draw[edge] (30) -- (26);
  \draw[edge] (28) -- (27);
  \draw[edge] (26) -- (29);
  \draw[edge] (23) -- (30);
  \draw[edge] (24) -- (31);

  \node[plabel] at (0,0) {1\,6\,8\,2\,4\,9\,10\,3\,5\,7-3};
\end{tikzpicture}%
}%
\makebox[0.3333\linewidth][c]{
\begin{tikzpicture}[x=\rankstep, y=\rankstep]
  \node[dot] (28) at (  3.97, 22.12) {};
  \node[dot] (21) at (  1.35, 21.00) {};
  \node[dot] (32) at (  3.97, 21.00) {};
  \node[gdot] (14) at (  0.03, 20.00) {};
  \node[dot] (25) at (  2.66, 20.00) {};
  \node[rdot] (7) at ( -1.97, 19.00) {};
  \node[gdot] (18) at (  0.66, 19.00) {};
  \node[odot] (11) at ( -1.67, 18.00) {};
  \node[rdot] (4) at ( -3.49, 17.00) {};
  \node[rdot] (6) at ( -3.49, 16.00) {};
  \node[dot] (27) at (  2.66, 16.00) {};
  \node[rdot] (3) at ( -3.78, 15.00) {};
  \node[dot] (20) at (  0.66, 15.00) {};
  \node[rdot] (5) at ( -3.97, 14.00) {};
  \node[gdot] (13) at ( -1.34, 14.00) {};
  \node[odot] (10) at ( -1.34, 13.00) {};
  \node[gdot] (12) at ( -1.34, 12.00) {};
  \node[gdot] (17) at (  0.66, 11.00) {};
  \node[dot] (19) at (  0.66, 10.00) {};
  \node[dot] (24) at (  1.65,  9.00) {};
  \node[dot] (26) at (  1.65,  8.00) {};
  \node[dot] (31) at (  1.65,  7.00) {};
  \node[dot] (30) at (  1.65,  6.00) {};
  \node[dot] (23) at (  1.65,  5.00) {};
  \node[gdot] (16) at (  0.33,  4.00) {};
  \node[dot] (29) at (  2.95,  4.00) {};
  \node[odot] (9) at (  0.33,  3.00) {};
  \node[dot] (22) at (  2.95,  3.00) {};
  \node[rdot] (2) at (  0.33,  2.00) {};
  \node[gdot] (15) at (  2.95,  2.00) {};
  \node[odot] (8) at (  1.65,  1.00) {};
  \node[rdot] (1) at (  1.65,  0.00) {};

  \draw[edge] (8) -- (1);
  \draw[edge] (9) -- (2);
  \draw[edge] (6) -- (3);
  \draw[edge] (11) -- (4);
  \draw[edge] (3) -- (5);
  \draw[edge] (4) -- (6);
  \draw[edge] (2) -- (8);
  \draw[edge] (15) -- (8);
  \draw[edge] (16) -- (9);
  \draw[edge] (5) -- (10);
  \draw[edge] (13) -- (10);
  \draw[edge] (7) -- (11);
  \draw[edge] (18) -- (11);
  \draw[edge] (10) -- (12);
  \draw[edge] (11) -- (13);
  \draw[edge] (9) -- (15);
  \draw[edge] (22) -- (15);
  \draw[edge] (23) -- (16);
  \draw[edge] (12) -- (17);
  \draw[edge] (20) -- (17);
  \draw[edge] (14) -- (18);
  \draw[edge] (25) -- (18);
  \draw[edge] (17) -- (19);
  \draw[edge] (18) -- (20);
  \draw[edge] (16) -- (22);
  \draw[edge] (29) -- (22);
  \draw[edge] (30) -- (23);
  \draw[edge] (19) -- (24);
  \draw[edge] (27) -- (24);
  \draw[edge] (21) -- (25);
  \draw[edge] (32) -- (25);
  \draw[edge] (24) -- (26);
  \draw[edge] (25) -- (27);
  \draw[edge] (23) -- (29);
  \draw[edge] (31) -- (30);
  \draw[edge] (26) -- (31);
  \draw[edge] (28) -- (32);

  \node[plabel] at (0,0) {1\,3\,7\,9\,6\,8\,11\,2\,4\,5\,10-4};
\end{tikzpicture}%
}%
\par\vspace*{5ex}

\noindent%
\makebox[0.3333\linewidth][c]{
\begin{tikzpicture}[x=\rankstep, y=\rankstep]
  \node[rdot] (4) at ( -1.78, 19.12) {};
  \node[odot] (11) at ( -1.78, 18.00) {};
  \node[rdot] (7) at ( -2.56, 17.00) {};
  \node[gdot] (18) at ( -0.99, 17.00) {};
  \node[rdot] (3) at ( -2.56, 16.00) {};
  \node[gdot] (14) at ( -0.99, 16.00) {};
  \node[dot] (25) at (  0.58, 16.00) {};
  \node[odot] (10) at ( -1.78, 15.00) {};
  \node[dot] (21) at ( -0.19, 15.00) {};
  \node[dot] (32) at (  1.38, 15.00) {};
  \node[gdot] (17) at ( -0.22, 14.00) {};
  \node[dot] (28) at (  1.35, 14.00) {};
  \node[dot] (24) at (  0.99, 13.00) {};
  \node[dot] (20) at (  0.99, 12.00) {};
  \node[dot] (31) at (  2.56, 12.00) {};
  \node[rdot] (6) at ( -1.51, 11.00) {};
  \node[dot] (27) at (  2.36, 11.00) {};
  \node[gdot] (13) at ( -0.22, 10.00) {};
  \node[dot] (30) at (  2.26, 10.00) {};
  \node[dot] (23) at (  0.99,  9.00) {};
  \node[gdot] (16) at ( -0.25,  8.00) {};
  \node[dot] (29) at (  1.33,  8.00) {};
  \node[odot] (9) at ( -1.29,  7.00) {};
  \node[dot] (26) at (  1.24,  7.00) {};
  \node[dot] (22) at (  0.50,  6.00) {};
  \node[rdot] (2) at ( -1.90,  5.00) {};
  \node[dot] (19) at (  0.50,  5.00) {};
  \node[gdot] (15) at ( -0.71,  4.00) {};
  \node[gdot] (12) at ( -0.71,  3.00) {};
  \node[odot] (8) at ( -1.32,  2.00) {};
  \node[rdot] (5) at ( -1.32,  1.00) {};
  \node[rdot] (1) at ( -1.32,  0.00) {};

  \draw[edge] (5) -- (1);
  \draw[edge] (9) -- (2);
  \draw[edge] (7) -- (3);
  \draw[edge] (8) -- (5);
  \draw[edge] (10) -- (6);
  \draw[edge] (11) -- (7);
  \draw[edge] (2) -- (8);
  \draw[edge] (12) -- (8);
  \draw[edge] (6) -- (9);
  \draw[edge] (16) -- (9);
  \draw[edge] (3) -- (10);
  \draw[edge] (14) -- (10);
  \draw[edge] (4) -- (11);
  \draw[edge] (15) -- (12);
  \draw[edge] (17) -- (13);
  \draw[edge] (18) -- (14);
  \draw[edge] (9) -- (15);
  \draw[edge] (19) -- (15);
  \draw[edge] (13) -- (16);
  \draw[edge] (23) -- (16);
  \draw[edge] (10) -- (17);
  \draw[edge] (21) -- (17);
  \draw[edge] (11) -- (18);
  \draw[edge] (22) -- (19);
  \draw[edge] (24) -- (20);
  \draw[edge] (25) -- (21);
  \draw[edge] (16) -- (22);
  \draw[edge] (26) -- (22);
  \draw[edge] (20) -- (23);
  \draw[edge] (30) -- (23);
  \draw[edge] (17) -- (24);
  \draw[edge] (28) -- (24);
  \draw[edge] (18) -- (25);
  \draw[edge] (29) -- (26);
  \draw[edge] (31) -- (27);
  \draw[edge] (32) -- (28);
  \draw[edge] (23) -- (29);
  \draw[edge] (27) -- (30);
  \draw[edge] (24) -- (31);
  \draw[edge] (25) -- (32);

  \node[plabel] at (0,0) {1\,4\,8\,11\,2\,6\,9\,3\,5\,7\,10-4};
\end{tikzpicture}%
}%
\makebox[0.3333\linewidth][c]{
\begin{tikzpicture}[x=\rankstep, y=\rankstep]
  \node[rdot] (4) at ( -0.85, 25.12) {};
  \node[odot] (11) at ( -0.85, 24.00) {};
  \node[gdot] (18) at ( -0.01, 23.00) {};
  \node[dot] (25) at (  1.67, 22.00) {};
  \node[dot] (21) at (  1.67, 21.00) {};
  \node[dot] (32) at (  3.88, 21.00) {};
  \node[rdot] (7) at ( -1.69, 20.00) {};
  \node[dot] (28) at (  3.73, 20.00) {};
  \node[gdot] (14) at ( -0.01, 19.00) {};
  \node[dot] (31) at (  3.60, 19.00) {};
  \node[dot] (24) at (  1.67, 18.00) {};
  \node[gdot] (17) at ( -0.01, 17.00) {};
  \node[odot] (10) at ( -1.69, 16.00) {};
  \node[rdot] (3) at ( -3.88, 15.00) {};
  \node[gdot] (13) at ( -1.69, 15.00) {};
  \node[rdot] (6) at ( -3.73, 14.00) {};
  \node[dot] (27) at (  1.67, 14.00) {};
  \node[rdot] (2) at ( -3.48, 13.00) {};
  \node[dot] (20) at ( -0.01, 13.00) {};
  \node[odot] (9) at ( -1.69, 12.00) {};
  \node[gdot] (16) at ( -0.01, 11.00) {};
  \node[dot] (23) at (  0.83, 10.00) {};
  \node[dot] (30) at (  0.83,  9.00) {};
  \node[dot] (29) at (  0.83,  8.00) {};
  \node[dot] (26) at (  0.83,  7.00) {};
  \node[dot] (22) at (  0.83,  6.00) {};
  \node[dot] (19) at (  0.83,  5.00) {};
  \node[gdot] (15) at (  0.83,  4.00) {};
  \node[gdot] (12) at (  0.83,  3.00) {};
  \node[odot] (8) at (  0.83,  2.00) {};
  \node[rdot] (5) at (  0.83,  1.00) {};
  \node[rdot] (1) at (  0.83,  0.00) {};

  \draw[edge] (5) -- (1);
  \draw[edge] (6) -- (2);
  \draw[edge] (10) -- (3);
  \draw[edge] (8) -- (5);
  \draw[edge] (3) -- (6);
  \draw[edge] (11) -- (7);
  \draw[edge] (12) -- (8);
  \draw[edge] (2) -- (9);
  \draw[edge] (13) -- (9);
  \draw[edge] (7) -- (10);
  \draw[edge] (17) -- (10);
  \draw[edge] (4) -- (11);
  \draw[edge] (15) -- (12);
  \draw[edge] (10) -- (13);
  \draw[edge] (18) -- (14);
  \draw[edge] (19) -- (15);
  \draw[edge] (9) -- (16);
  \draw[edge] (20) -- (16);
  \draw[edge] (14) -- (17);
  \draw[edge] (24) -- (17);
  \draw[edge] (11) -- (18);
  \draw[edge] (22) -- (19);
  \draw[edge] (17) -- (20);
  \draw[edge] (25) -- (21);
  \draw[edge] (26) -- (22);
  \draw[edge] (16) -- (23);
  \draw[edge] (27) -- (23);
  \draw[edge] (21) -- (24);
  \draw[edge] (31) -- (24);
  \draw[edge] (18) -- (25);
  \draw[edge] (29) -- (26);
  \draw[edge] (24) -- (27);
  \draw[edge] (32) -- (28);
  \draw[edge] (30) -- (29);
  \draw[edge] (23) -- (30);
  \draw[edge] (28) -- (31);
  \draw[edge] (25) -- (32);

  \node[plabel] at (0,0) {1\,5\,7\,11\,2\,6\,9\,3\,4\,8\,10-4};
\end{tikzpicture}%
}%
\makebox[0.3333\linewidth][c]{
\begin{tikzpicture}[x=\rankstep, y=\rankstep]
  \node[dot] (29) at ( -1.62, 22.12) {};
  \node[dot] (23) at ( -1.62, 21.00) {};
  \node[gdot] (17) at ( -2.59, 20.00) {};
  \node[dot] (28) at ( -0.63, 20.00) {};
  \node[odot] (11) at ( -2.59, 19.00) {};
  \node[dot] (22) at ( -0.63, 19.00) {};
  \node[rdot] (5) at ( -2.59, 18.00) {};
  \node[gdot] (16) at ( -0.63, 18.00) {};
  \node[odot] (10) at ( -1.62, 17.00) {};
  \node[rdot] (4) at ( -1.62, 16.00) {};
  \node[rdot] (3) at ( -1.62, 15.00) {};
  \node[odot] (9) at ( -1.62, 14.00) {};
  \node[gdot] (15) at ( -0.85, 13.00) {};
  \node[dot] (21) at (  0.63, 12.00) {};
  \node[dot] (27) at (  2.19, 11.00) {};
  \node[rdot] (6) at ( -2.34, 10.00) {};
  \node[dot] (24) at (  2.30, 10.00) {};
  \node[gdot] (12) at ( -0.85,  9.00) {};
  \node[dot] (26) at (  2.51,  9.00) {};
  \node[dot] (18) at (  0.63,  8.00) {};
  \node[dot] (25) at (  2.59,  8.00) {};
  \node[dot] (20) at (  0.63,  7.00) {};
  \node[dot] (19) at (  0.63,  6.00) {};
  \node[gdot] (14) at ( -0.85,  5.00) {};
  \node[gdot] (13) at ( -0.85,  4.00) {};
  \node[odot] (8) at ( -1.62,  3.00) {};
  \node[odot] (7) at ( -1.62,  2.00) {};
  \node[rdot] (2) at ( -1.62,  1.00) {};
  \node[rdot] (1) at ( -1.62,  0.00) {};

  \draw[edge] (2) -- (1);
  \draw[edge] (7) -- (2);
  \draw[edge] (4) -- (3);
  \draw[edge] (10) -- (4);
  \draw[edge] (11) -- (5);
  \draw[edge] (9) -- (6);
  \draw[edge] (8) -- (7);
  \draw[edge] (6) -- (8);
  \draw[edge] (13) -- (8);
  \draw[edge] (3) -- (9);
  \draw[edge] (5) -- (10);
  \draw[edge] (16) -- (10);
  \draw[edge] (17) -- (11);
  \draw[edge] (15) -- (12);
  \draw[edge] (14) -- (13);
  \draw[edge] (12) -- (14);
  \draw[edge] (19) -- (14);
  \draw[edge] (9) -- (15);
  \draw[edge] (11) -- (16);
  \draw[edge] (22) -- (16);
  \draw[edge] (23) -- (17);
  \draw[edge] (21) -- (18);
  \draw[edge] (20) -- (19);
  \draw[edge] (18) -- (20);
  \draw[edge] (25) -- (20);
  \draw[edge] (15) -- (21);
  \draw[edge] (17) -- (22);
  \draw[edge] (28) -- (22);
  \draw[edge] (29) -- (23);
  \draw[edge] (27) -- (24);
  \draw[edge] (26) -- (25);
  \draw[edge] (24) -- (26);
  \draw[edge] (21) -- (27);
  \draw[edge] (23) -- (28);

  \node[plabel] at (0,0) {1\,2\,7\,8\,10\,5\,3\,4\,6\,9\,11-5};
\end{tikzpicture}%
}%
\par

\caption{Posets for 4 occurrences of various patterns, pattern-overlap in box
  under each one. Red nodes are first occurrence, yellow second occurrence,
  circled nodes in both.}
\label{fig-posets-zoo-2}
\end{figure}


\newpage
\begin{table}[H] \centering \small
\begin{tabular}{|c|c|c|c|} \hline pattern & overlaps 1 & overlaps 2 &
overlaps 3 \\ \hline \hline $132$ & {$(2j-1)\sfact{2}$} & &\\\hline
\hline $1243$ & $(3j-2)\sfact{3}$ \blu{Coro. \ref{coro-1-k-1}} & \one
& \\\hline $1324$ & 1 & $C_{j}$ \blu{Prop. \ref{prop-Fuss-Catalan}} &
\\\hline $1342$ & $\prod_{i=0}^{j-1}\binom{3i+2}{2}$ & \one & \\\hline
$1423$ & $(3j-2)\sfact{3}$ \blu{Coro. \ref{coro-1-k-1}} & 1 & \\\hline
$2143$ & ~ $1, 9, 234, 12204, 1067040,\ldots $ & 1 & \\\hline $2413$ &
~ $1, 9, 234, 12204, 1067040,\ldots$ & $C_{j}$,
\blu{Prop. \ref{prop-Fuss-Catalan}} & \\\hline \hline \hline $12354$ &
$(4j-3)\sfact{4}$ \blu{Coro. \ref{coro-1-k-1}} & \one & \one \\\hline
$12435$ & 1 & 1 & \one \\\hline $12453$
&$\prod_{i=0}^{j-1}\binom{4i+2}{2}$
\blu{Prop.~\ref{prop-prod-formula-trees}} & \one & \one \\\hline
$12534$ & $(4j-3)\sfact{4}$ \blu{Coro. \ref{coro-1-k-1}} &
$(3j-2)\sfact{3}$ & \one \\\hline $13254$ & $(4j-3)\sfact{4}$
\blu{Coro. \ref{coro-1-k-1}} & \one & 1 \\\hline $13425$ & 1 &
$\disp\frac{1}{2j+1}\binom{3j}{j}$ \blu{Prop. \ref{prop-Fuss-Catalan}}
& \one \\\hline $13452$ & $\prod_{i=0}^{j-1}\binom{4i+3}{3}$
\blu{Prop. \ref{prop-prod-formula-trees}} & \one & \one \\\hline
$13524$ & $(4j-3)\sfact{4}$ \blu{Coro. \ref{coro-1-k-1}} & ~ $1, 6,
81, 1806, 57447, \ldots$ & \one \\\hline $14253$ &
\parbox{10.3em}{$\prod_{i=0}^{j-1}\binom{4i+2}{2}$\blu{Prop.~\ref{prop-prod-formula-trees}}}
&\one ~& $C_{j}$ \blu{Prop. \ref{prop-Fuss-Catalan}} \\\hline $14325$
& 1 & \oeis\ A274644 & \one \\\hline $14523$ &
\parbox{10.3em}{$\prod_{i=0}^{j-1}\binom{4i+2}{2}$
\blu{Prop.~\ref{prop-prod-formula-trees}}} & $(2j-1)\sfact{2}$ & \one
\\\hline $15234$ & $(4j-3)\sfact{4}$ \blu{Coro. \ref{coro-1-k-1}} & 1
& \one \\\hline $15243$ &
\parbox{10.3em}{$\prod_{i=0}^{j-1}\binom{4i+2}{2}$
\blu{Prop.~\ref{prop-prod-formula-trees}}} & \one & 1 \\\hline $15324$
& $(4j-3)\sfact{4}$ \blu{Coro. \ref{coro-1-k-1}} & $(3j-2)\sfact{3}$ &
\one \\\hline $15423$ & \parbox{10.3em}{
$\prod_{i=0}^{j-1}\binom{4i+2}{2}$
\blu{Prop.~\ref{prop-prod-formula-trees}}} & 1 & \one \\\hline $21354$
& ~ $1, 16, 816, 86528, 15661440,\ldots$ & 1 & \one \\\hline $21453$ &
~ $1, 30, 4440, 1867920,\ldots$ & 1 & \one \\\hline $21534$ & ~ $1,
16, 816, 86528, 15661440,\ldots$ & \one & \one \\\hline $21543$ & ~
$1, 30, 4440, 1867920,\ldots$ & $(3j-2)\sfact{3}$ & \one \\\hline
$23514$ & ~ $1, 16, 816, 86528, 15661440,\ldots$& $(2j-1)\sfact{2}$ &
\one \\\hline $24153$ & ~ $1, 30, 4440, 1867920,\ldots$& \one & \one
\\\hline $24513$ & ~ $1, 30, 4440, 1867920,\ldots$ & ~ $1, 6, 81,
1806, 57447, \ldots$& \one \\\hline $25314$ & ~ $1, 16, 816, 86528,
15661440,\ldots$ & \oeis\ A274644 & \one \\\hline $25413$ & ~ $1, 30,
4440, 1867920,\ldots$& $\disp\frac{1}{2j+1}\binom{3j}{j}$
\blu{Prop. \ref{prop-Fuss-Catalan}} & \one \\\hline
\end{tabular}
\caption{Enumerating sequences for all
equivalence classes of patterns of length up to~$5$, excluding the
pattern $123\ldots K$.  Here, $j$ is the number of occurrences and
$(n)\sfact{k}$ is $n(n-k)(n-2k)\cdots$, see
Definition~\ref{def-sfact}.}
\label{table-contain}
\end{table}


\section{Symmetries}\label{sec-symmetries}

Prior to moving into the classification and enumeration of
permutations packing a given pattern --- or, equivalently, the
enumeration of linear extensions of pattern posets --- we take note of
a number of elementary symmetries that will simplify the task.

\begin{definition}\label{def-rev-compl} Consider a permutation $\pi\in
\cls_n$, written as a word in one-line notation $\pi_1\ldots
\pi_n$. The \emph{reverse of $\pi$} is the permutation
$\pi^R=\pi_n\ldots\pi_1$.  The \emph{complement of $\pi$} is the
permutation $\pi^C=(n+1-\pi_1)\ldots (n+1-\pi_n)$.  The \emph{reverse
complement of $\pi$} is obtained by composing the two operations,
namely $\pi^{RC}=(n+1-\pi_n)\ldots (n+1-\pi_1)$.
\end{definition}

Let $\cls_{\pi}^m(j)$ denote the set of permutations $m$-packing $j$
occurrences of a $K$-pattern $\pi$.  The proof of the following lemma
consists of applying the same symmetry operation to each permutation
$\sigma$ $m$-packing a pattern $\pi$ as is applied to $\pi$ itself,
since each of them transforms an occurrence of $\pi$ in $\sigma$ in
the same way as it transforms $\pi$.

\begin{lemma}\label{lemma-symmetries} Fix a $K$-pattern $\pi$. Then,
  for any overlap $m$ and number of occurrences
  $j$,
  $$|\cls_{\pi}^m(j)|=|\cls_{\pi^R}^m(j)|=|\cls_{\pi^C}^m(j)|=|\cls_{\pi^{RC}}^m(j)|.$$
\end{lemma}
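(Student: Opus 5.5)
The plan is to exhibit explicit bijections $\cls_{\pi}^m(j)\to\cls_{\pi^R}^m(j)$ and $\cls_{\pi}^m(j)\to\cls_{\pi^C}^m(j)$, namely $\sigma\mapsto\sigma^R$ and $\sigma\mapsto\sigma^C$. The statement for $\pi^{RC}$ then follows by composing the two, since $\pi^{RC}=(\pi^R)^C$. Each map is an involution on $\clsn$, so it suffices to check that it sends $\cls_{\pi}^m(j)$ into the target set. Applying the same argument to $\pi^R$ or $\pi^C$ gives the reverse inclusion.

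\textbf{Complement.} The first step is the observation that order isomorphism is compatible with complementation. If $w\sim w'$ are words of length $\ell$, then the complemented words $(n+1-w_i)_i$ and $(\ell+1-w'_i)_i$ are again order isomorphic, because $x\mapsto c-x$ reverses all strict inequalities. Let $\sigma\in\cls_{\pi}^m(j)$ with $n=K+(j-1)(K-m)$ and $d=K-m$. For each $i\in[j]$ the block $\sigma_{1+(i-1)d}\ldots\sigma_{K+(i-1)d}$ satisfies \eqref{def-stack-1}, so the corresponding block of $\sigma^C$ is order isomorphic to $\pi^C$. The positions of the blocks do not change, so $\sigma^C\in\cls_{\pi^C}^m(j)$.

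\textbf{Reverse.} The second step handles reversal. The block of $\sigma^R$ in positions $1+(i-1)d,\ldots,K+(i-1)d$ is the reverse of the block of $\sigma$ in positions $n+1-(K+(i-1)d),\ldots,n+1-(1+(i-1)d)$. The key check is that this set of blocks is the same prescribed set. Indeed $n+1-K-(i-1)d=1+(j-1)d-(i-1)d=1+(j-i)d$, so the block starting at $1+(i-1)d$ in $\sigma^R$ comes from the block starting at $1+(j-i)d$ in $\sigma$. As $i$ runs over $[j]$, so does $j+1-i$. Since reversing two order-isomorphic words preserves order isomorphism, every prescribed block of $\sigma^R$ is order isomorphic to $\pi^R$.

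The only mildly delicate point is this index bookkeeping for reversal: the arithmetic progression of starting positions $\{1,1+d,\ldots,1+(j-1)d\}$ must be mapped to itself under $p\mapsto n+2-K-p$. This holds precisely because $n$ was forced to equal $K+(j-1)d$, and the verification above shows it. Everything else is routine.
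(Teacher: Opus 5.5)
Your proposal is correct and takes the same approach as the paper: apply to each packing $\sigma$ the same symmetry (reverse, complement, or their composition) that is applied to $\pi$, which turns each prescribed occurrence of $\pi$ into a prescribed occurrence of the transformed pattern. Your check that reversal sends the block starting at $1+(i-1)d$ to the block starting at $1+(j-i)d$ makes explicit the index bookkeeping that the paper's one-line justification leaves implicit.
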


For example, \emph{for any given overlap $m$}, the number of
permutations $m$-packing 14253 is the same as that $m$-packing any of
the following patterns:

\begin{table}[h] \centering
    \begin{tabular}{|c|c|}\hline $\pi$&Symmetry\\\hline 14253&
original pattern \\ 52413& complement\\ 35241& reverse\\ 31425&
reverse complement\\\hline
    \end{tabular}
    \caption{Example of the usual symmetries of consecutive
permutation patterns.}
    \label{tab:my_label}
\end{table}

Nevertheless, a closer inspection of Table~\ref{table-contain} reveals
many more equinumerations than these trivial ones. Prior to discussing
these, we require some additional notation.

\begin{definition}\label{def-pref-suff}\label{def-prefix} Let $\pi$ be
a $K$-pattern and $m<K$. Its \emph{$m$-prefix} and \emph{$m$-suffix}
are $\bp=\bp_1\ldots\bp_m=\pi_1\ldots\pi_m$ and
$\bs=\bs_1\ldots\bs_m=\pi_{K-m+1}\ldots\pi_K$, respectively.  For
$m\le K/2$, its \emph{interfix} is $\bw=\pi_{m+1}\ldots\pi_{K-m}$.
\end{definition} The interfix is empty when $K=2m$. For $j\ge2$, a
packing can exist only if $\bp\sim\bs$, since the orders imposed on
each shared block must agree.  If this fails, $|\cls_\pi^m(1)|=1$ and
$|\cls_\pi^m(j)|=0$ for $j\ge2$.  Henceforth, whenever pattern packing
is considered, we assume $\bp\sim\bs$.

Any permutation $\sigma$ $(m,j)$-packing a $K$-pattern $\pi$ naturally
decomposes into distinct subwords.  Namely, if $m>K/2$ the subwords
corresponding to the overlap of the prefix and the suffix are
necessarily order isomorphic to the intersection of the prefix and
suffix. In contrast, when $m\le K/2$, the packing does not place
further constraints on the $j$ subwords that lie between the
prefix/suffix components, apart from having to be isomorphic to the
interfix of $\pi$. We denote the resulting decomposition as follows.

\begin{notation}\label{notation-packing}[Decomposition of permutations
packing a pattern] Fix a $K$-pattern $\pi$ and overlap $m\le K/2$. For
$j\in\mathbb N$ and $\sigma\in\cls_\pi^m(j)$, we write
\begin{equation}
\sigma=\rho^{(1)}\omega^{(1)}\rho^{(2)}\omega^{(2)}\ldots\rho^{(j)}\omega^{(j)}\rho^{(j+1)},
\end{equation} where, for all $i\in[j+1]$, $\rho^{(i)}\sim\bp$, that
is, each $\rho^{(i)}$ must be isomorphic to the $m$-prefix/suffix of
$\pi$.  When $m=K/2$ the $\omega^{(i)}$ are empty, in which case
$\sigma$ also packs $j+1$ non-overlapping occurrences of the
$m$-prefix/suffix of $\pi$.
\end{notation}

\begin{example}\label{ex-2.5} For $\sigma=
1\,\,\,11\,\,\,5\,\,2\,\,\,10\,\,\,7\,\,\,3\,\,9\,\,\,6\,\,\,4\,\,\,8\in\cls_{15324}^2(3)$,
we have
$$\sigma=\underbrace{1\,\,\,\,11}_{\rho^{(1)}}\,\underbrace{5}_{\omega^{(1)}}\,\underbrace{2\,\,\,\,\,10}_{\rho^{(2)}}\,\underbrace{7}_{\omega^{(2)}}\,\underbrace{3\,\,\,\,\,\,9}_{\rho^{(3)}}\,\underbrace{6}_{\omega^{(3)}}\, \underbrace{4\,\,\,\,\,\,8}_{\rho^{(4)}}.$$
\end{example}

When we talk about permuting a word $w_1w_2\ldots w_n$ consisting of distinct
letters by a permutation $\tau=\tau_1\tau_2\ldots \tau_n$ that means arranging
the letters of $w$ so that $w_{\tau_i}$ is in place~$i$ in the permuted
word. For example, permuting 46927 by 31542 results in 94726.

For fixed overlap $m$, we can now state the following additional
symmetry.
\begin{lemma}\label{Lemma-prefix_suffix} The number of permutations
$(m,j)$-packing a $K$-pattern $\pi$ depends only on the $m$-prefix and
$m$-suffix of $\pi$.
\label{lemma-symmetry-prefix}
\end{lemma}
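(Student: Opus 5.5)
The plan is to read ``the $m$-prefix and $m$-suffix of $\pi$'' as the actual words $\bp=\pi_1\ldots\pi_m$ and $\bs=\pi_{K-m+1}\ldots\pi_K$, with their literal letters and not only their order-isomorphism classes, and to handle two cases. If $m>K/2$, then $K-m+1\le m$, so every position of $\pi$ lies in the prefix or the suffix. Hence $\bp$ and $\bs$ determine $\pi$ completely and there is nothing to prove. The real case is $m\le K/2$, where the interfix $\bw=\pi_{m+1}\ldots\pi_{K-m}$ is nonempty or empty.

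For $m\le K/2$, let $\pi,\pi'$ be two $K$-patterns with the same $m$-prefix $\bp$ and the same $m$-suffix $\bs$. Their interfixes $\bw,\bw'$ then use the same set of letters, namely $[K]\setminus(\{\bp_i\}\cup\{\bs_i\})$. So there is a unique permutation $\tau$ of the $K-2m$ interfix positions with $\bw'$ equal to $\bw$ permuted by $\tau$. I will build a bijection $\Phi:\cls_\pi^m(j)\to\cls_{\pi'}^m(j)$. Write $\sigma\in\cls_\pi^m(j)$ as in Notation~\ref{notation-packing}, $\sigma=\rho^{(1)}\omega^{(1)}\cdots\rho^{(j)}\omega^{(j)}\rho^{(j+1)}$. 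Define $\Phi(\sigma)$ by keeping every $\rho^{(i)}$ in place and replacing each $\omega^{(i)}$ by $\omega^{(i)}$ permuted by $\tau$.

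The key step is checking that $\Phi(\sigma)$ packs $\pi'$. The $i$th occurrence window is exactly $\rho^{(i)}\omega^{(i)}\rho^{(i+1)}$, and it has $\sigma_{\text{window}}\sim\pi$. Since $m\le K/2$, each $\omega^{(i)}$ lies in exactly one window, namely the $i$th. Therefore permuting $\omega^{(i)}$ changes only that window, and it leaves the set of values occupying that window unchanged. Inside the window, the value of rank $r$ in $\pi$ sits at the position of the letter $r$. Permuting the interfix block by $\tau$ moves the entry of rank $\bw_k$ to the position where $\bw'$ has the letter $\bw_k$. The prefix and suffix ranks are untouched. So the new window is order isomorphic to $\bp\,\bw'\,\bs=\pi'$. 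Windows that share a $\rho^{(i+1)}$ with this one are unaffected, because that block did not move.

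Applying the same construction with $\tau^{-1}$ gives the inverse map, so $\Phi$ is a bijection and $|\cls_\pi^m(j)|=|\cls_{\pi'}^m(j)|$. The only point needing care, and the one I would state explicitly, is that the blocks $\omega^{(i)}$ are disjoint from all overlaps when $m\le K/2$. This disjointness is what lets local rearrangements stay independent across windows.
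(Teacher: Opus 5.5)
Your proof is correct and follows the paper's argument. The case where the prefix and suffix cover all of $\pi$ is trivial; otherwise each interfix block $\omega^{(i)}$ of Notation~\ref{notation-packing} is permuted by the fixed $\tau$ carrying $\bw$ to $\bw'$, and this is reversible. You give more detail than the paper on why each window becomes order isomorphic to $\pi'$ and why neighbouring windows are unaffected (each $\omega^{(i)}$ lies in exactly one window when $m\le K/2$), but the route is the same.
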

\begin{proof}\label{only-pref-suff} When $m\ge K/2$, the result is
trivial because one can then reconstruct $\pi$ from its $m$-prefix and
$m$-suffix, which between them contain all the letters of
$\pi$. Suppose now that $m<K/2$. Let $\pi=\bp\bw\bs$, interpreted as a
word and written as the concatenation of the $m$-prefix $\bp$, the
interfix $\bw$, and the $m$-suffix~$\bs$. Any permutation
$\sigma\in\cls_{\pi}^m(j)$ can be expressed as
(Notation~\ref{notation-packing})
$$
\sigma=\rho^{(1)}\omega^{(1)}\rho^{(2)}\omega^{(2)}\ldots\rho^{(j)}\omega^{(j)}\rho^{(j+1)},
$$
where for all $i\in[j+1]$, $\rho^{(i)}\sim \bp$.

Let $\tau$ be a permutation of length $K-2m=|\bw|$.  Permuting each
$\omega^{(i)}$ by $\tau$ and letting $\bw'$ be $\bw$ permuted by $\tau$, we get
a permutation $\sigma'$ that is covered by $\pi'=\bp\bw'\bs$ with overlaps~$m$.
This process is clearly reversible, which proves the claim.
\end{proof}

\begin{example}\label{ex-1-pack-same-enum} In the first column of
  Table~\ref{table-contain} we see that 1-packings (overlap $m=1$) of all the
  5-patterns with $\pi_1=1$ and $\pi_5=4$ have the same enumeration.
\end{example}

In the following lemma, an $n$-permutation is a permutation of
$\{1,2,\ldots,n\}$.

\begin{lemma}\label{lem-overlap-pref-suff} Let $\pi=\bp \bw\bs$ be a
$K$-pattern whose $m$-prefix $\bp$ and $m$-suffix $\bs$ are order
isomorphic, $m\le K/2$ and $\bw$ is any permutation of the remaining
letters.  Let $\bp'$ and $\bs'$ be $\bp$ and $\bs$ permuted by the
same permutation $\tau$ and let $\bw'$ be $\bw$ permuted by some
permutation $\rho.$ Then the number of $n$-permutations covered by
$\pi$ with overlaps $m$ equals the number of $n$-permutations covered
by $\pi'=\bp'\bw'\bs'$ with overlaps $m$.
\end{lemma}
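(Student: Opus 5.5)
The plan is to build an explicit bijection $\cls_\pi^m(j)\to\cls_{\pi'}^m(j)$ for each $j$, extending the idea in the proof of Lemma~\ref{lemma-symmetry-prefix}. Since $m\le K/2$, every $\sigma\in\cls_\pi^m(j)$ has the decomposition of Notation~\ref{notation-packing},
$$\sigma=\rho^{(1)}\omega^{(1)}\rho^{(2)}\cdots\rho^{(j)}\omega^{(j)}\rho^{(j+1)}.$$
Here the $i$th prescribed occurrence is exactly the subword $\rho^{(i)}\omega^{(i)}\rho^{(i+1)}\sim\bp\bw\bs$. Define $\Phi(\sigma)$ by permuting every block $\rho^{(i)}$ by $\tau$ and every block $\omega^{(i)}$ by $\rho$. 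This only moves letters within blocks, so the result is again an $n$-permutation with the same block structure.

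First, I will check that $\pi'$ is a legitimate pattern with $\bp'\sim\bs'$. Applying the same positional rearrangement $\tau$ to two order-isomorphic words of length $m$ gives order-isomorphic words, so $\bp\sim\bs$ implies $\bp'\sim\bs'$.

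Next, I will verify that $\Phi(\sigma)$ is $m$-covered by $\pi'$. Fix $i\le j$. The $i$th occurrence window of $\Phi(\sigma)$ is obtained from the word $\rho^{(i)}\omega^{(i)}\rho^{(i+1)}$ by the positional permutation $\tau\oplus\rho\oplus\tau$ of $[K]$, which acts blockwise on the three segments. The word $\pi'=\bp'\bw'\bs'$ is obtained from $\pi=\bp\bw\bs$ by the same positional permutation. Order isomorphism compares the letters in position $k$ with those in position $l$ for all $k,l$. It is therefore preserved when the same rearrangement of positions is applied to both words, so the window is order isomorphic to $\pi'$.

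Finally, $\Phi$ is invertible: permuting the blocks by $\tau^{-1}$ and $\rho^{-1}$ gives the map back from $\cls_{\pi'}^m(j)$, by the same argument with the roles of $\pi$ and $\pi'$ exchanged. Hence $|\cls_\pi^m(j)|=|\cls_{\pi'}^m(j)|$ for all $j$, and so for all $n$. When $m=K/2$ the interfix is empty, $\rho$ is the empty permutation, and the argument is unchanged.

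The main point to get right is the treatment of the interior blocks $\rho^{(i)}$ with $2\le i\le j$. Each such block is shared by two consecutive occurrences: it plays the role of the suffix $\bs$ in occurrence $i-1$ and of the prefix $\bp$ in occurrence $i$. A block can be rearranged in only one way, so the construction is consistent only because the hypothesis applies the \emph{same} $\tau$ to both $\bp$ and $\bs$. By contrast, the interfix blocks $\omega^{(i)}$ belong to a single occurrence each, which is why $\rho$ may be chosen independently of $\tau$.
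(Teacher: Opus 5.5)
Your proposal is correct and is the paper's proof: it defines the same bijection, permuting each shared prefix/suffix block $\rho^{(i)}$ by $\tau$ and each interfix block $\omega^{(i)}$ by $\rho$, with inverse given by $\tau^{-1}$ and $\rho^{-1}$. Your explicit checks spell out what the paper leaves implicit in ``clearly reversible'': that each window $\rho^{(i)}\omega^{(i)}\rho^{(i+1)}$ becomes order isomorphic to $\pi'$, and that the shared blocks are what force the same $\tau$ on both prefix and suffix.
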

\begin{proof} Given a permutation~$\sigma$ $m$-covered by $\pi$,
permute by $\tau$ the letters corresponding to $\bp$ and $\bs$,
respectively, in every occurrence of~$\pi$ in~$\sigma$, and permute by
$\rho$ the letters between these occurrences of $\bp$/$\bs$, that is,
the letters of the $\omega^{(i)}$ in Notation~\ref{notation-packing}.
This turns every occurrence of $\pi$ into an occurrence of $\pi'$, and
this is clearly reversible, so that the map thus defined is a
bijection.
\end{proof}

\begin{example}\label{ex-2-pack-same-enum} Referring to the second
column of Table~\ref{table-contain}, 2-packing the pattern $12435$
gives the same enumeration as 2-packing $21453$.
\end{example}
\begin{lemma}\label{lem-pref-suff-wilf} Let $\pi=\bp\bw\bs$ be a
$K$-pattern with $m$-prefix $\bp$ and $m$-suffix $\bs$ where $m\le
K/2$.  The number of permutations $m$-packing $j$ occurrences of $\pi$
equals the number of permutations $m$-packing $j$ occurrences of
$\pi'=\bs\bw\bp$.
\end{lemma}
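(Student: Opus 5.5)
We use the decomposition of Notation~\ref{notation-packing}: a permutation in $\cls_\pi^m(j)$ is a word $\rho^{(1)}\omega^{(1)}\rho^{(2)}\cdots\omega^{(j)}\rho^{(j+1)}$ with $\rho^{(i)}\sim\bp$. Since $m\le K/2$, the constraints are exactly these: for each $i\in[j]$, the word $\rho^{(i)}\omega^{(i)}\rho^{(i+1)}$ is order isomorphic to $\pi=\bp\bw\bs$. Because we assume $\bp\sim\bs$, this is equivalent to asking that the $2m+|\bw|$ letters of $\rho^{(i)},\omega^{(i)},\rho^{(i+1)}$ interleave in value as $\bp,\bw,\bs$ do in $\pi$. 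The goal is an explicit bijection $\cls_\pi^m(j)\to\cls_{\pi'}^m(j)$ with $\pi'=\bs\bw\bp$. Note that $\pi'$ is again a permutation and has $m$-prefix $\bs$ and $m$-suffix $\bp$, which are order isomorphic, so $\pi'$ is admissible.

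Here is the candidate map. Reverse the order of the blocks while keeping each block's internal left-to-right order. Explicitly, send $\sigma$ to
\[
\sigma'=\rho^{(j+1)}\omega^{(j)}\rho^{(j)}\omega^{(j-1)}\cdots\rho^{(2)}\omega^{(1)}\rho^{(1)}.
\]
The occurrence at block $i$ of $\sigma'$ is the window $\rho^{(j+2-i)}\omega^{(j+1-i)}\rho^{(j+1-i)}$. Its letters form a set on which the $\rho$-part coming first is the one that played the role of $\bs$ in the original occurrence, and the last is the one that played $\bp$. So, as a word, this window is the original occurrence read block-wise in reverse, with each block read forward. That is exactly the relation between $\bp\bw\bs$ and $\bs\bw\bp$. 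Hence each window is order isomorphic to $\bs\bw\bp=\pi'$, and $\sigma'\in\cls_{\pi'}^m(j)$.

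The key check is that order isomorphism is preserved by rearranging blocks. If $xyz\sim\bp\bw\bs$ as words, with blocks of lengths $m,K-2m,m$, then $zyx\sim\bs\bw\bp$. This holds because order isomorphism only compares pairs of letters, and the rearrangement applies the same position permutation to both words. The map is an involution up to swapping $\pi$ and $\pi'$: applying the same block reversal to $\sigma'$ recovers $\sigma$. It is therefore a bijection, and the counts agree for every $j$.

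The main obstacle is making sure no hidden constraint is broken. For $m\le K/2$, the consecutive windows share only a single $\rho$-block, and each block $\rho^{(i)}$ is reused in the same role pattern, namely as a suffix for occurrence $i-1$ and a prefix for occurrence $i$. After reversal these roles swap consistently. In the case $m=K/2$, where $\bw$ is empty, the same argument goes through verbatim. No compatibility issue arises beyond $\bp\sim\bs$, because Definition~\ref{def-packing} imposes no conditions across non-adjacent windows.
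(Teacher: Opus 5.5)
Your proof is correct and is essentially the paper's argument. Your block-reversal map $\sigma\mapsto\rho^{(j+1)}\omega^{(j)}\rho^{(j)}\cdots\omega^{(1)}\rho^{(1)}$ is exactly the composite of the two bijections the paper cites: first reverse each block $\rho^{(i)}$ and $\omega^{(i)}$ internally (Lemma~\ref{lem-overlap-pref-suff} with both permutations taken to be reversals, turning $\pi$ into $\bp^R\bw^R\bs^R$), then reverse the whole word (Lemma~\ref{lemma-symmetries}); you verify this composite directly instead of citing the two lemmas.
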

\begin{proof} By reversing each of $\bp,\bw$ and $\bs$ and then
reversing the resulting word, we obtain $\pi'$, and by
Lemmas~\ref{lemma-symmetries} and~\ref{lem-overlap-pref-suff} this
does not affect the enumeration of permutations $m$-packing $\pi$.
\end{proof}

\begin{example}\label{ex-pref-suff-same-enum} Referring to the second
column of Table~\ref{table-contain}, 2-packing the pattern $13425$
gives the same enumeration as 2-packing $25413$.
\end{example}

\begin{remark}\label{remark-canon-patt} By
Lemma~\ref{lem-overlap-pref-suff}, when $m\le K/2$ and $\bp\sim\bs$,
we may sort the prefix, the suffix, and the letters between them,
respectively, to increasing order without affecting the enumeration of
permutations covered by $\pi$ for a given overlap. Without affecting
the enumeration, we may also swap the prefix and suffix, by
Lemma~\ref{lem-pref-suff-wilf}, and also apply any of the symmetries
in Lemma~\ref{lemma-symmetries}.  In what follows most of the examples
and results we give for particular patterns or families of patterns
consist of the \emph{canonical representative} of each equivalence
class with respect to these symmetries, which is the lexicographically
least pattern in the class.  As an example, the canonical
representative of the class containing 37461285, for overlap 3, is
14738256, obtained by taking the reverse complement 41783526 of
37461285 and then sorting each of 417, 83 and 526 to increasing order.
This is equivalent to first switching prefix and suffix, then taking
complement and finally sorting each of the three parts.
\end{remark}

\begin{definition}\label{def-content} The \emph{content} of a word
$w=a_1a_2\ldots a_n$ is the set $\{a_1,a_2,\ldots,a_n\}$.
\end{definition}

\begin{lemma}\label{lem-content}

  Suppose $\pi$ and $\pi'$ are two $K$-patterns, each having
$m$-prefix isomorphic to its $m$-suffix where $K/2<m\le2K/3$.  If the
$m$-prefixes of $\pi$ and $\pi'$ have the same content, and likewise
their $m$-suffixes, then $\pi$ and $\pi'$ have the same enumeration.
\end{lemma}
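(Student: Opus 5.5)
The plan is to show that the set of order conditions defining $\cls_\pi^m(j)$ can be rewritten so that its only dependence on $\pi$ is through the contents of the $m$-prefix and $m$-suffix, together with a family of ``local'' order conditions that can be transported between $\pi$ and $\pi'$ by a fixed relabelling. Put $d=K-m$. The hypothesis $K/2<m\le 2K/3$ becomes $K/3\le d<K/2$. Split the positions of $\pi$ into three consecutive segments:
$$A=\{1,\dots,d\},\qquad B=\{d+1,\dots,m\},\qquad C=\{m+1,\dots,K\}.$$
Here $|A|=|C|=d$ and $|B|=m-d=K-2d\le d$. The prefix occupies $A\cup B$ and the suffix occupies $B\cup C$. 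If $P$ and $S$ denote the contents of prefix and suffix, then the values at positions $B$ are exactly $P\cap S$, those at $A$ are $P\setminus S$, and those at $C$ are $S\setminus P$. So the contents of the three segments are determined by the contents of $\bp$ and $\bs$, and agree for $\pi$ and $\pi'$.

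\emph{Step 1 (reformulating a single occurrence).} A word $x$ of length $K$ satisfies $x\sim\pi$ if and only if three conditions hold: $x$ restricted to $A\cup B$ is $\sim\bp$; $x$ restricted to $B\cup C$ is $\sim\bs$; and for every $a\in A$ and $c\in C$, we have $x_a<x_c$ exactly when $\pi_a<\pi_c$. The last comparison is decided by the contents alone: it says whether a given value of $P\setminus S$ lies below a given value of $S\setminus P$. Hence this cross relation $R\subseteq A\times C$ is the same for $\pi$ and $\pi'$. Applying this to each occurrence, $\sigma\in\cls_\pi^m(j)$ if and only if three families of conditions hold: the $j$ prefix windows (length $m$, starting at $1+(i-1)d$) are all $\sim\bp$; the $j$ suffix windows are all $\sim\bs$; and the cross relation $R$ holds between the first $d$ and the last $d$ entries of each occurrence. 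Because $\bp\sim\bs$, the suffix window of occurrence $i$ is the prefix window of occurrence $i+1$. So in total there are $j+1$ windows of length $m$, each $\sim\bp$, consecutive windows overlap in $m-d$ positions, and the $R$-conditions link window $i$ to window $i+1$.

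\emph{Step 2 (constructing the bijection).} Let $\tau$ be the permutation such that $\pi'$ restricted to $A\cup B$ equals $\pi$ restricted to $A\cup B$ permuted by $\tau$. Equal contents of the prefixes force $\tau$ to map $A$ to $A$ and $B$ to $B$. Likewise, on the suffix let $\tau'$ map $B\to B$ and $C\to C$. I would first try to show that $\tau$ and $\tau'$ can be chosen to agree on $B$, using $\bp\sim\bs$ and $\bp'\sim\bs'$. This is the crux, since $B$ is the shared block. Given such a $\tau$, define $\sigma'$ from $\sigma$ by cutting $\sigma$ into its consecutive segments of type $A$, $B$ and $C$, and permuting each segment by the corresponding restriction of $\tau$. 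The condition $m\le 2K/3$ (that is, $|B|\le d$) is what guarantees that these segments tile $\sigma$ consistently: each position of $\sigma$ belongs to a unique segment. Under this map, windows $\sim\bp$ become windows $\sim\bp'$. The $R$-conditions are stated entrywise between $A$-values and $C$-values, and $\tau$ preserves which set each value lies in, so they are preserved. The map is invertible via $\tau^{-1}$.

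\emph{Main obstacle.} I expect the main difficulty to be the consistency on the shared blocks. I must check that one relabelling of the $B$-segments (which, when $K=3d$, touch three occurrences) simultaneously converts every prefix window and every suffix window, and that the segment tiling is well defined at the ends of $\sigma$. If a single $\tau$ cannot be arranged, the fallback is the poset viewpoint. There I would show that both packing posets are generated by the same cover relations coming from $R$ and from the chains on the $B$-blocks, and then compare their linear extensions directly.
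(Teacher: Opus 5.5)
There is a genuine gap in Step 2. It sits where you suspected, but not where you placed the ``crux''.

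Your blocks $A,B,C$ do not tile $\sigma$, contrary to your claim that each position lies in a unique segment. The $(i+1)$st occurrence starts $d$ places after the $i$th. So its $A$-block is the $B$-block of the $i$th occurrence (length $2m-K$) followed by the first $2K-3m$ places of the $i$th $C$-block. Its $B$-block is the last $2m-K$ places of that $C$-block. Hence one position of $\sigma$ lies in segments of different types. ``Permute each segment by the restriction of $\tau$'' is then well defined only if two things hold:
\begin{itemize}
\item $\tau|_A$ maps the first $2m-K$ places of $A$ onto themselves and the remaining $2K-3m$ onto themselves, and similarly for $C$;
\item the permutations induced on the three blocks of length $2m-K$ (start of $A$, all of $B$, end of $C$) coincide, and likewise on the two blocks of length $2K-3m$.
\end{itemize}
Knowing only that $A$, $B$, $C$ carry the same contents for $\pi$ and $\pi'$ does not give this; it needs $\bp\sim\bs$ in an essential way. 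By contrast, the point you call the crux, that $\tau$ and $\tau'$ agree on $B$, is automatic. Both are determined by $\pi|_B$ and $\pi'|_B$, which consist of distinct letters.

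The missing idea is the one the paper's proof rests on. Refine $\pi=\pi_i\pi_a\pi_x\pi_b\pi_f$ with $|\pi_i|=|\pi_x|=|\pi_f|=2m-K$ and $|\pi_a|=|\pi_b|=2K-3m$; this is where $m\le 2K/3$ enters. Then show that the contents of $\bp$ and $\bs$ determine the content of every piece. Since $\bp\sim\bs$, the ranks of $\pi_i$ within $\bp$ equal the ranks of $\pi_x$ within $\bs$, and the ranks of $\pi_f$ within $\bs$ equal those of $\pi_x$ within $\bp$. The content of $\pi_x$ is $P\cap S$, so the contents of $\pi_i$ and $\pi_f$ are fixed, and hence so are those of $\pi_a$ and $\pi_b$.

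Combine this with $\pi_i\sim\pi_x\sim\pi_f$ and $\pi_a\sim\pi_b$, which hold for both patterns. This yields a single permutation $\tau$ for all blocks of length $2m-K$ and a single $\rho$ for all blocks of length $2K-3m$. The permutation $\sigma$ does tile into alternating blocks of these two lengths with period $d$. Applying $\tau$ and $\rho$ blockwise carries every occurrence of $\pi$ to an occurrence of $\pi'$, and the map is reversible.

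Once this is in place, your Step 1 reformulation via the cross relation $R$ is unnecessary. Also, $R$ is the same for $\pi$ and $\pi'$ only as a relation on values, not on positions. The poset fallback, as stated, is not an argument.
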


\begin{proof} Since $m>K/2$, the $m$-prefix $\bp$ and $m$-suffix $\bs$
of $\pi$ intersect in $\ell=2m-K$ letters, and likewise for $\pi'$.
We denote the word consisting of this intersection by $\pi_x$,
respectively $\pi'_x$.  The content of $\pi_x$ and $\pi'_x$ must be
the same since the prefixes of $\pi$ and $\pi'$ have the same content
and their suffixes have the same content, respectively.

Let $\pi=\pi_i\pi_a\pi_x\pi_b\pi_f$, where $\pi_i,\pi_x$ and $\pi_f$
each have length $\ell=2m-K$, so $\pi_a$ and $\pi_b$ each have length
$m-2\ell$, and define $\pi'_i$ etc. analogously.  Thus,
$\bp=\pi_i\pi_a\pi_x$ and $\bs=\pi_x\pi_b\pi_f$, and therefore, since
$\bp$ and $\bs$ are order isomorphic, $\pi_i,\pi_x$ and $\pi_f$ must
all be order isomorphic, and $\pi_a$ and $\pi_b$ must be order
isomorphic, and likewise for $\pi'$.  Note that if $m=2K/3$ (so
$3\ell=K$) then $\pi_a$ and $\pi_b$ are empty, but this does not
affect the following argument.

We now show that given the content of $\bp$ and $\bs$ the content of
$\pi_x$ determines the content of each of $\pi_i,\pi_a,\pi_b,\pi_f$:
Since $\bp$ and $\bs$ are order isomorphic, the letters of $\pi_x$
stand in the same relation in terms of size to the letters of the word
$\pi_b\pi_f$ as $\pi_i$ does to the letters of the word $\pi_a\pi_x$.
As an example, if the letters of $\pi_x$ were the second, fifth and
sixth in size among the letters of $\pi_x\pi_b\pi_f$, then the letters
of $\pi_i$ must be the second, fifth and sixth in size among the
letters of $\pi_i\pi_a\pi_x$.  Thus, the content of $\pi_x$ determines
the content of $\pi_i$ and, analogously, the content of $\pi_f$.  The
remaining letters of $\bp$ are just $\pi_a$, whose content is thus
also determined, and likewise for $\pi_b$.

Since the content of $\pi_x$ equals that of $\pi'_x$, the argument in
the preceding paragraph implies that $\pi_i$ and $\pi'_i$ must have
the same content, and likewise for $\pi_f,\pi'_f$ and thus also
$\pi_a,\pi'_a$ and $\pi_b,\pi'_b$.

Now, $\pi_i,\pi_x,\pi_f$ are order isomorphic and $\pi_a,\pi_b$ are
order isomorphic, and likewise for $\pi'$.  There is thus a
permutation $\tau$ that takes $\pi_i,\pi_x$ and $\pi_f$ to
$\pi'_i,\pi'_x$ and $\pi'_f$, respectively, and a permutation $\rho$
that takes $\pi_a$ to $\pi'_a$ and $\pi_b$ to $\pi'_b$.  Given a
permutation covered by $\pi$ with overlaps $m$ we can split it into
segments corresponding to $\pi_i,\pi_a,\pi_x,\pi_b,\pi_f\ldots$.
Permuting each segment corresponding to $\pi_i,\pi_x$ or $\pi_f$ by
$\tau$ and each segment corresponding to $\pi_a$ or $\pi_b$ by $\rho$
we get a permutation covered by $\pi'$ with overlaps $m$.  Since this
is clearly reversible it constitutes a bijection proving the lemma.
\end{proof}

The following lemma follows from the proof of Lemma~\ref{lem-content}.

\begin{lemma}\label{lem-porism} Given a pattern $\pi$ with
$\pi_i,\pi_a,\pi_x,\pi_b,\pi_f$ as in Lemma~\ref{lem-content} and
$K/2<m\le2K/3$, let $\pi'$ be the pattern obtained from $\pi$ by
permuting all of $\pi_i,\pi_x,\pi_f$ by an arbitrary permutation
$\tau$ and permuting both $\pi_a$ and $\pi_b$ by an arbitrary
permutation $\rho$.  Then $\pi$ and $\pi'$ have the same enumeration
with respect to overlap $m$.
\end{lemma}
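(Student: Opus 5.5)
The plan is to reuse the bijection from the last paragraph of the proof of Lemma~\ref{lem-content}, applied directly to $\pi$ and $\pi'$. The content-matching step of that proof is not needed here, because $\pi'$ is built from $\pi$ by permuting segments in place.

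First I will record the structure of the segments. With $d=K-m$ and $\ell=2m-K$, we have $K=3\ell+2(m-2\ell)$. A permutation $\sigma$ that is $m$-covered by $\pi$ with $j$ occurrences therefore splits into consecutive segments of the form
$$\pi_i\,\pi_a\,\pi_x\,\pi_b\,\pi_x\,\pi_b\,\pi_x\cdots\pi_b\,\pi_f.$$
The $t$-th occurrence then occupies $\pi_x\pi_b\pi_x$-type segments in the middle, and its $\pi_i$- and $\pi_a$-slots coincide with the $\pi_x$- and $\pi_b$-slots of the previous occurrence. Concretely, the first $\ell$ positions of occurrence $t$ are the overlap block shared with occurrence $t-1$, in which it plays the role of $\pi_f$ for occurrence $t-1$. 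Since $\bp\sim\bs$, the word $\pi_i\pi_a\pi_x$ is order isomorphic to $\pi_x\pi_b\pi_f$. Hence $\pi_i\sim\pi_x\sim\pi_f$ and $\pi_a\sim\pi_b$, as shown in the proof of Lemma~\ref{lem-content}.

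Next I define the map. Partition the positions of $\sigma$ into blocks of length $\ell$ (the $\pi_i/\pi_x/\pi_f$ slots) and blocks of length $m-2\ell$ (the $\pi_a/\pi_b$ slots). Permute every block of length $\ell$ by $\tau$ and every block of length $m-2\ell$ by $\rho$, and call the result $\sigma'$. Within each prescribed occurrence this is exactly the operation that turns $\pi$ into $\pi'$. It acts on the positions of that occurrence as a fixed position permutation, applied block by block. Applying the same position permutation to two order-isomorphic words gives order-isomorphic words. Consequently, each window $\sigma_{1+(t-1)d}\ldots\sigma_{K+(t-1)d}\sim\pi$ becomes a window $\sim\pi'$.

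The one point needing care is consistency on shared blocks. A block of $\sigma$ lying in two occurrences plays different roles in them: it is $\pi_f$ in one and $\pi_i$ in the next, or $\pi_x$ in one and $\pi_i$ or $\pi_f$ in another, or $\pi_b$ in one and $\pi_a$ in the next. The map is still well defined because the hypothesis uses the \emph{same} $\tau$ for all three of $\pi_i,\pi_x,\pi_f$ and the same $\rho$ for both $\pi_a,\pi_b$. So every block receives a single prescribed rearrangement, regardless of which occurrence is viewed. The map is inverted by applying $\tau^{-1}$ and $\rho^{-1}$ blockwise, and the same argument sends $\pi'$-coverings to $\pi$-coverings. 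This gives a bijection between the two sets, and therefore the equality of enumerations. In the boundary case $m=2K/3$ the $\rho$-blocks are empty and the same argument goes through.
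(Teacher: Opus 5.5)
Your argument is correct and is the paper's own: the paper simply observes that the lemma follows from the blockwise bijection in the last paragraph of the proof of Lemma~\ref{lem-content} (apply $\tau$ to every length-$\ell$ block and $\rho$ to every length-$(m-2\ell)$ block), and the content-matching step is unnecessary because $\pi'$ is obtained from $\pi$ in place. One harmless slip: the first $\ell$ positions of occurrence $t$ are the $\pi_x$-block of occurrence $t-1$ and the $\pi_f$-block of occurrence $t-2$ (so an $\ell$-block can lie in three occurrences), not the $\pi_f$-block of occurrence $t-1$; since every $\ell$-block receives $\tau$ whatever its role, well-definedness is unaffected.
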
 As an example, for overlap 5 the patterns 13725846 and
31752864 have the same enumeration, since their prefixes and suffixes
have content $\{1,2,3,5,7\}$ and $\{2,4,5,6,8\}$, respectively.

\begin{remark}\label{rem-content-analog} For $m\le K/2$,
Lemma~\ref{lem-overlap-pref-suff} is analogous to
Lemma~\ref{lem-content}.  In the former case, however, there are no
restrictions on how we may permute the prefix/suffix, as they don't
intersect.
\end{remark}

Note that Table~\ref{table-contain} also contains examples of patterns
$\pi$ and $\pi'$ (such as 2143 and 2413) that have the same size and
identical numbers of $1$-packings, but different numbers of
$2$-packings.

\emph{\textbf{Important convention:}} For a $K$-pattern, the overlap
$m$ is always taken to satisfy $1\le m<K$. For $m\le K/2$, we will
generally take the $m$-prefix $\bp$ and $m$-suffix $\bs$ to be
increasing and $\bp_1<\bs_1$, since any pattern can be transformed to
adhere to that without affecting its enumerative properties. For a
$K$-pattern with $m>K/2$, we will generally assume that $1$ does not
belong to the suffix, which can be accomplished by reversal if need
be, since $1$ can not belong to the intersection of $\bp$ and $\bs$ as
we shall later show.  We also always assume that, given a $K$-pattern
$\pi$ and overlap $m$, $\bp$ and $\bs$ are order isomorphic, since
otherwise there are no permutations $(m,j)$-covered by $\pi$ for
$j>1$.

In the study of permutation patterns, two patterns $\pi$ and $\pi'$
are said to be \emph{Wilf equivalent} if the permutations avoiding
$\pi$ are equinumerous with the permutations avoiding $\pi'$. (A
stronger notion of equivalence is achieved by not only considering
avoidance, but any given number of occurrences.)  In our context we
declare two pairs $(\pi,m)$ and $(\pi',m')$ to be Wilf equivalent if
$\cls_{\pi}^m(j)$ and $\cls_{\pi'}^{m'}(j)$ have the same size for all
$j$.  Of course, the numerical equivalences presented in the symmetry
lemmas in this section all imply Wilf equivalence, but it seems there
are many other Wilf equivalences, even for patterns and overlaps of
different lengths, and we mention some of these in
Section~\ref{sec-open}.

\section{Generating function and general recurrence}\label{sec-gf}

In this section, we establish two general enumeration results: an
explicit rational multivariate generating function for $1$-packings of
an arbitrary pattern, and a recurrence for $m$-packings when $m\leq
K/2$.
\subsection{A rational generating function for $1$-packings}

At first sight, the enumeration of \(1\)-packings appears to depend
rather sensitively on the pattern
(cf. Table~\ref{table-contain}). Surprisingly, however, despite the
variety of packing sequences that arise in subsequent sections, all
overlap-\(1\) counts admit a single multivariate generating function
formula.

For $j\geq 1$, let $x_1,\ldots,x_j,y_1,\ldots,y_j,z_1,\ldots,z_j$ be
commuting indeterminates and work in the formal power series ring
$$
\mathbb Z[[x_1,\ldots,x_j,y_1,\ldots,y_j,z_1,\ldots,z_j]].
$$
Write
$$
\bx=(x_1,\ldots,x_j),\qquad \by=(y_1,\ldots,y_j),\qquad
\bz=(z_1,\ldots,z_j),
$$
and set
$$
B_0:=\sum_{r=1}^j x_r,\quad\quad
B_i:=\sum_{r=1}^{i-1}z_r+y_i+\sum_{r=i+1}^j x_r,\qquad 1\leq i\leq
j,\quad\quad B_{j+1}:=\sum_{r=1}^j z_r,
$$
where an empty sum is understood to be zero. Furthermore, for
$a\in\mathbb Z_{\geq 0}$, denote
$$
\bx_j^a:=\left(\prod_{i=1}^j x_i\right)^a,\qquad
\by_j^a:=\left(\prod_{i=1}^j y_i\right)^a,\qquad
\bz_j^a:=\left(\prod_{i=1}^j z_i\right)^a,
$$
with zeroth powers equal to $1$, the multiplicative identity in the
ring of formal power series.

\begin{theorem}\label{thm-gf-1-packing} Let $\pi$ be a $K$-pattern
with $\pi_1<\pi_K$. The number of permutations $1$-packing $j$
occurrences of $\pi$ is the coefficient of
$$
\bx_j^{\pi_1-1}\by_j^{\pi_K-\pi_1-1}\bz_j^{K-\pi_K}
$$
in the formal power series
\begin{equation}\label{eq-GF}
F_j(\bx,\by,\bz):=\prod_{i=0}^{j+1}\frac{1}{1-B_i}.
\end{equation}
\end{theorem}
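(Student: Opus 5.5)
The plan is to encode a $1$-packing by the sequence of \emph{values} of $\sigma$ read in increasing order, recording for each value which occurrence it belongs to and where it sits relative to the shared letters. For $m=1$ we have $n=j(K-1)+1$. In Notation~\ref{notation-packing}, the letters $\rho^{(1)},\ldots,\rho^{(j+1)}$ are single entries, and occurrence $i$ consists of $\rho^{(i)}\omega^{(i)}\rho^{(i+1)}$. Since $\pi_1<\pi_K$, these shared letters satisfy $r_1<r_2<\cdots<r_{j+1}$, where $r_i$ is the value of $\rho^{(i)}$. They therefore split the remaining values into $j+2$ intervals $I_0<r_1<I_1<r_2<\cdots<r_{j+1}<I_{j+1}$. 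For occurrence $i$ to be isomorphic to $\pi$, the block $\omega^{(i)}$ must contain exactly $a=\pi_1-1$ values below $r_i$, exactly $b=\pi_K-\pi_1-1$ values strictly between $r_i$ and $r_{i+1}$, and exactly $c=K-\pi_K$ values above $r_{i+1}$. In addition, its letters must appear in positions prescribed by the interfix. It is cleanest to note that, once the \emph{set} of values of $\omega^{(i)}$ is known, the arrangement of $\omega^{(i)}$ is forced by $\pi$, since $\pi$ fixes the relative order of all $K-2$ interfix letters together with $\pi_1,\pi_K$.

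The key steps are as follows. First, I will translate the three conditions on block $i$ into conditions on intervals. A value of block $r$ lying below $r_r$ lies in some $I_k$ with $k\le r-1$; one between $r_r$ and $r_{r+1}$ lies in $I_r$; one above $r_{r+1}$ lies in $I_k$ with $k\ge r+1$. Second, I will label each non-shared value by a letter: $x_r$ if it belongs to block $r$ and lies below $r_r$, $y_r$ if it lies in $I_r$, and $z_r$ if it lies above $r_{r+1}$. Then interval $I_k$ can only contain the letters $x_r$ ($r>k$), $y_k$ (for $1\le k\le j$), and $z_r$ ($r<k$). These are exactly the variables appearing in $B_k$ for $0\le k\le j+1$. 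Third, reading each $I_k$ in increasing order of value gives a word $u_k$ over these letters. I will show that the map $\sigma\mapsto(u_0,\ldots,u_{j+1})$ is a bijection onto the set of tuples in which each $u_k$ is an arbitrary word in the letters of $B_k$, subject only to the condition that, for every $r$, the letters $x_r,y_r,z_r$ occur in total $a,b,c$ times.

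For the inverse map, the tuple determines the global order of all values: concatenate $u_0,r_1,u_1,\ldots,r_{j+1},u_{j+1}$ and assign values $1,\ldots,n$ in that order. This determines the value set of each block $\omega^{(r)}$ (all letters with subscript $r$). The arrangement of each block is then forced, as noted above, and the multiplicity condition guarantees that every occurrence is isomorphic to $\pi$. Finally, the expansion $\prod_k 1/(1-B_k)=\prod_k\sum_{u_k}(\text{monomial of }u_k)$ is a sum over all such tuples of words, since noncommuting words specialize to commuting monomials with multinomial multiplicity. Hence the coefficient of $\bx_j^{a}\by_j^{b}\bz_j^{c}$ counts exactly the admissible tuples. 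This is the claimed formula.

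The main obstacle is the bijectivity argument in the third step. One must check that letters of the same type (for example several $x_r$'s in one interval, or $x_r$'s spread across different intervals $I_k$) do not lead to overcounting. The point to verify is that distinct tuples give distinct relative orders between different blocks, and between blocks and the shared letters, and hence distinct permutations. At the same time, the internal order within a block carries no extra freedom, because the block's letters are distinguished only by value, and their positions are dictated by $\pi$. I would also double-check the boundary cases $B_0$ and $B_{j+1}$, which contain no $y$ term, since $I_0$ and $I_{j+1}$ lie outside every pair $(r_i,r_{i+1})$.
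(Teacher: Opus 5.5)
Your proposal is correct and is essentially the paper's proof. The paper likewise encodes $\sigma$ as a word indexed by values: it marks the shared boundary values by $w_0,\ldots,w_j$ and each interior value by $x_i$, $y_i$ or $z_i$ according to where it lies relative to the boundary values of its own occurrence. It then splits that word as $U_0w_0U_1\cdots w_jU_{j+1}$, with each $U_i$ a word over the alphabet of $B_i$, and extracts the coefficient to impose the multiplicities $\pi_1-1$, $\pi_K-\pi_1-1$, $K-\pi_K$. Your tuple $(u_0,\ldots,u_{j+1})$ separated by $r_1,\ldots,r_{j+1}$ is exactly this decomposition, and your inverse map is the paper's: once each block's value set is fixed, the multiplicities force its boundary values to have ranks $\pi_1$ and $\pi_K$, so the arrangement is determined.
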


\begin{example}\label{ex-coeff-2-3-occs} Expanding the notation we
have the following expression: \footnote{Note that setting
$x_i=y_i=z_i=0$ when $i<1$ or $i>j$, Equation~\eqref{eq-GF-expanded}
simplifies to this:
\begin{equation*}\label{eq-GF-simplified} F_j(\bx,\by,\bz) =
\dfrac{1}{ \displaystyle\prod_{i=0}^{j+1}
\left(1-\left(\displaystyle\sum_{r=1}^{i-1}z_r+y_i+\displaystyle\sum_{r=i+1}^j
x_r\right)\right)}.
\end{equation*} }
\begin{equation}\label{eq-GF-expanded} F_j(\bx,\by,\bz) = \dfrac{1}{
\left(1-\displaystyle\sum_{r=1}^j x_r\right)
\displaystyle\prod_{i=1}^j
\left(1-\left(\displaystyle\sum_{r=1}^{i-1}z_r+y_i+\displaystyle\sum_{r=i+1}^j
x_r\right)\right) \left(1-\displaystyle\sum_{r=1}^j z_r\right)}.
\end{equation}

Therefore, for $2$ occurrences, we look for the coefficient of
$$
(x_1x_2)^{\pi_1-1}(y_1y_2)^{\pi_K-\pi_1-1}(z_1z_2)^{K-\pi_K}
$$
in
\begin{equation}\label{eq-GF-2}
\dfrac{1}{(1-(x_1+x_2))(1-(y_1+x_2))(1-(z_1+y_2))(1-(z_1+z_2))}.
\end{equation}

For $3$ occurrences, we look for the coefficient of
$$
(x_1x_2x_3)^{\pi_1-1}(y_1y_2y_3)^{\pi_K-\pi_1-1}(z_1z_2z_3)^{K-\pi_K}
$$
in
\begin{equation}
\begin{aligned}
&\frac{1}{(1-x_1-x_2-x_3)(1-y_1-x_2-x_3)(1-z_1-y_2-x_3)}\\
&\hspace{3em}\times\frac{1}{(1-z_1-z_2-y_3)(1-z_1-z_2-z_3)}.
\end{aligned}
\end{equation}
\end{example}

\begin{corollary}\label{coro-1-pack-2-occs} The number of permutations
$1$-packing $2$ occurrences of a $K$-pattern $\pi$ is
$$
\binom{\pi_1+\pi_K-2}{\pi_K-1} \binom{2K-\pi_1-\pi_K}{K-\pi_K}.
$$
\end{corollary}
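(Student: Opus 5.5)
The plan is to extract the coefficient in Theorem~\ref{thm-gf-1-packing} for $j=2$ directly from \eqref{eq-GF-2}. First reduce to the case $\pi_1<\pi_K$. If $\pi_1>\pi_K$, replace $\pi$ by its complement $\pi^C$, which by Lemma~\ref{lemma-symmetries} has the same packing numbers. This sends $(\pi_1,\pi_K)\mapsto(K+1-\pi_1,K+1-\pi_K)$, and the claimed expression is invariant under this map. Indeed, $\pi_1+\pi_K-2\mapsto 2K-\pi_1-\pi_K$ and $2K-\pi_1-\pi_K\mapsto \pi_1+\pi_K-2$, so the two binomial factors swap. Using $\binom{N}{k}=\binom{N}{N-k}$ one checks that the lower indices match up: for instance $\binom{2K-\pi_1-\pi_K}{K-\pi_1}=\binom{2K-\pi_1-\pi_K}{K-\pi_K}$. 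So it suffices to treat $\pi_1<\pi_K$.

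Write $a=\pi_1-1$, $b=\pi_K-\pi_1-1$ and $c=K-\pi_K$. We need the coefficient of $x_1^ax_2^ay_1^by_2^bz_1^cz_2^c$ in
\[
\frac{1}{(1-x_1-x_2)(1-y_1-x_2)(1-z_1-y_2)(1-z_1-z_2)}.
\]
The key observation is that $x_1$ occurs only in the first factor, $y_1$ only in the second, $y_2$ only in the third, and $z_2$ only in the fourth. Expanding each factor as a geometric series, the first factor must therefore contribute $x_1^ax_2^p$, the second $y_1^bx_2^q$, the third $z_1^ry_2^b$ and the fourth $z_1^sz_2^c$, subject to $p+q=a$ and $r+s=c$. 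The coefficient of $u^kv^l$ in $1/(1-u-v)$ is $\binom{k+l}{k}$. Hence the desired coefficient factors as
\[
\Bigl(\sum_{p+q=a}\binom{a+p}{a}\binom{b+q}{b}\Bigr)\Bigl(\sum_{r+s=c}\binom{b+r}{b}\binom{c+s}{c}\Bigr).
\]

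Each sum is an instance of the Chu--Vandermonde identity in the form $\sum_{k=0}^{n}\binom{\alpha+k}{k}\binom{\beta+n-k}{n-k}=\binom{\alpha+\beta+n+1}{n}$. This follows by comparing coefficients of $t^n$ in $(1-t)^{-\alpha-1}(1-t)^{-\beta-1}=(1-t)^{-\alpha-\beta-2}$. The first sum becomes $\binom{2a+b+1}{a}$, and the second becomes $\binom{2c+b+1}{c}$. Substituting back, $2a+b+1=\pi_1+\pi_K-2$, and $\binom{\pi_1+\pi_K-2}{\pi_1-1}=\binom{\pi_1+\pi_K-2}{\pi_K-1}$. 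Likewise $2c+b+1=2K-\pi_1-\pi_K$ with lower index $c=K-\pi_K$. This yields the stated product.

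The only step needing care is the bookkeeping in the coefficient extraction: checking that each variable really is confined to the factors claimed, so that the sum collapses to the two independent convolutions. The Vandermonde evaluation itself is routine.
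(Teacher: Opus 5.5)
Your proposal is correct and follows the paper's proof essentially step for step. You reduce to $\pi_1<\pi_K$ by complementation, extract the coefficient from \eqref{eq-GF-2} as a product of two convolution sums, and evaluate each sum with the identity $\sum_{k=0}^{n}\binom{p+k}{p}\binom{q+n-k}{q}=\binom{p+q+n+1}{n}$. Beyond the paper, you explicitly check that the formula is invariant under complementation and justify the summation identity via $(1-t)^{-\alpha-1}(1-t)^{-\beta-1}=(1-t)^{-\alpha-\beta-2}$; the paper leaves both points implicit.
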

\begin{proof} By complementation, we may assume $\pi_1<\pi_K$. Set
$$
a=\pi_1-1,\qquad b=\pi_K-\pi_1-1,\qquad c=K-\pi_K.
$$
Extracting the coefficient of $(x_1x_2)^a(y_1y_2)^b(z_1z_2)^c$

in \eqref{eq-GF-2} gives the product
$$
\left(\sum_{r=0}^a \binom{a+r}{a}\binom{a+b-r}{b}\right)
\left(\sum_{s=0}^c \binom{b+s}{b}\binom{2c-s}{c}\right).
$$
Since
$$
\sum_{r=0}^n \binom{p+r}{p}\binom{q+n-r}{q} = \binom{p+q+n+1}{n},
$$
the two factors are, respectively,
$$
\binom{2a+b+1}{a} \qquad\text{and}\qquad \binom{b+2c+1}{c},
$$
which gives the stated expression.
\end{proof}

Prior to proving the main theorem of this section, we introduce some
additional notation. Fix $j,K\in\mathbb N$ and, for each $i\in[j]$,
denote by $I_i$ the set of positions occupied by the $i$th prescribed
occurrence of $\pi$ in a permutation that $1$-packs $j$ occurrences,
i.e.
$$
I_i:=\{1+(i-1)(K-1),\ldots,K+(i-1)(K-1)\}.
$$
For $0\leq i\leq j$, set
$$
e_i:=1+i(K-1).
$$
Then $e_i$ is the unique element of $I_i\cap I_{i+1}$ for $1\leq i\leq
j-1$, while $e_0$ and $e_j$ are the first and last positions of the
packing, respectively. For $i\in[j]$, we call $e_{i-1}$ and $e_i$ the
\emph{boundary positions} of the $i$th occurrence and set
$$
\widehat I_i:=I_i\setminus\{e_{i-1},e_i\}.
$$
Accordingly, we refer to $\sigma(e_{i-1})$ and $\sigma(e_i)$ as
\emph{boundary values}. Viewing $\sigma$ as a map
$[jK-(j-1)]\to[jK-(j-1)]$, the occurrences of $\pi$ naturally
partition the domain of $\sigma$ into the blocks
$$
\{e_0\},\widehat I_1,\{e_1\},\widehat I_2,\ldots,\{e_{j-1}\},\widehat
I_j,\{e_j\}.
$$

In the following, we once again denote by $\cls_{\pi}^1(j)$ the set of
permutations in $\cls_{jK-j+1}$ $1$-packing $j$ occurrences of a
$K$-pattern $\pi$.

We now state an elementary lemma concerning $1$-packings of a pattern
$\pi$ that will be used in the construction below.

\begin{lemma}\label{lemma-basic} Consider a permutation
$\sigma\in\clsn$ with an occurrence of a $K$-pattern $\pi$ spanning a
set of consecutive indices $A=\{a,\ldots,a+K-1\}$. Let $\widehat
A:=\{a+1,\ldots,a+K-2\}$. Assume $\pi_1<\pi_K$. Then there are exactly
$\pi_1-1$ indices $k\in\widehat A$ such that $\sigma(k)<\sigma(a)$,
exactly $\pi_K-\pi_1-1$ indices such that
$\sigma(a)<\sigma(k)<\sigma(a+K-1)$, and exactly $K-\pi_K$ indices
such that $\sigma(k)>\sigma(a+K-1)$.
\end{lemma}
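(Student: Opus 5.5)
This is immediate from order isomorphism. By definition of an occurrence of the consecutive pattern $\pi$ on the positions $A$, the word $\sigma(a)\sigma(a+1)\ldots\sigma(a+K-1)$ is order isomorphic to $\pi_1\pi_2\ldots\pi_K$. Hence, for any $k\in A$, writing $t=k-a+1\in[K]$, we have $\sigma(k)<\sigma(a)$ if and only if $\pi_t<\pi_1$, and $\sigma(k)<\sigma(a+K-1)$ if and only if $\pi_t<\pi_K$ (Definition~\ref{def-order-iso}). The map $k\mapsto t$ is a bijection from $\widehat A$ onto $\{2,\ldots,K-1\}$. So each of the three counts equals the number of interior indices $t\in\{2,\ldots,K-1\}$ whose values $\pi_t$ lie in the corresponding interval.

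Next I count these within $\pi$. Since $\pi\in\cls_K$ takes every value in $[K]$ exactly once, the values strictly below $\pi_1$ are $1,\ldots,\pi_1-1$, which gives $\pi_1-1$ of them. Among these, $\pi_1$ itself is excluded trivially. The value $\pi_K$ is also excluded, because $\pi_K>\pi_1$ by assumption. Hence all $\pi_1-1$ of these values sit at interior positions $t\in\{2,\ldots,K-1\}$.

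In the same way, the values strictly between $\pi_1$ and $\pi_K$ number $\pi_K-\pi_1-1$, and neither endpoint value is among them, so all of them are interior. The values strictly above $\pi_K$ number $K-\pi_K$, and $\pi_1<\pi_K$ is not among them, so they are interior as well.

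As a consistency check, the three counts sum to $K-2=|\widehat A|$, so every interior index falls into exactly one of the three classes. The only point requiring care is the use of the hypothesis $\pi_1<\pi_K$: it is what guarantees that neither boundary value falls into any of the three ranges, so the counts over $\widehat A$ are the full counts. There is no real obstacle beyond this.
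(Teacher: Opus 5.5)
Your proposal is correct and takes the same approach as the paper, which simply calls the lemma immediate from the relative order prescribed by $\pi$. You spell out the order-isomorphism transfer to $\pi$ and the value counts in $[K]$, and you correctly use $\pi_1<\pi_K$ to ensure that neither boundary value falls into any of the three ranges.
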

\begin{proof} Immediate from the relative order prescribed by $\pi$.
\end{proof}

\begin{definition}\label{def-words-W} Given a $K$-pattern $\pi$ with
$\pi_1<\pi_K$ and $j\in\mathbb N$, let $\clw_{1,\pi,j}$ denote the set
of words of length $jK-j+1$ on the alphabet
$$
\cla_j:=\{x_1,\ldots,x_j,y_1,\ldots,y_j,z_1,\ldots,z_j,w_0,w_1,\ldots,w_j\}
$$
satisfying the following conditions:
\begin{enumerate}
\item[A1.] Each element of $\{w_0,w_1,\ldots,w_j\}$ appears exactly
once, and these elements appear in increasing order of their indices.
\item[A2.] Every place to the left of $w_0$ is occupied by a letter in
$\{x_1,\ldots,x_j\}$.
\item[A3.] Every place to the right of $w_j$ is occupied by a letter
in $\{z_1,\ldots,z_j\}$.
\item[A4.] For each $1\leq i\leq j$, every place between $w_{i-1}$ and
$w_i$ is occupied by a letter in
$$
\{z_1,\ldots,z_{i-1},y_i,x_{i+1},\ldots,x_j\}
$$
\item[A5.] For each $1\leq i\leq j$, there are exactly $\pi_1-1$
occurrences of $x_i$, $\pi_K-\pi_1-1$ occurrences of $y_i$, and
$K-\pi_K$ occurrences of $z_i$.
\end{enumerate}
\label{def-set-W}
\end{definition}

\begin{lemma}\label{lemma-bijection} For every $K$-pattern $\pi$ with
$\pi_1<\pi_K$ and $j\in\mathbb N$, the sets $\cls_{\pi}^1(j)$ and
$\clw_{1,\pi,j}$ are in bijection.
\end{lemma}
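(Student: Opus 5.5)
The bijection goes through the inverse permutation. Given $\sigma\in\cls_\pi^1(j)$ with $n=jK-j+1$, read the positions of $\sigma$ in increasing order of their values, i.e.\ scan $\sigma^{-1}_1,\sigma^{-1}_2,\ldots,\sigma^{-1}_n$. Each position $k$ is replaced by a letter of $\cla_j$ as follows. If $k=e_i$ is a boundary position, write $w_i$. Otherwise $k$ lies in a unique $\widehat I_i$, and we write $x_i$, $y_i$ or $z_i$ according as $\sigma(k)<\sigma(e_{i-1})$, $\sigma(e_{i-1})<\sigma(k)<\sigma(e_i)$, or $\sigma(k)>\sigma(e_i)$. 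This gives a word $\Phi(\sigma)$ of length $n$.

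First I would check that $\Phi(\sigma)\in\clw_{1,\pi,j}$.
\begin{itemize}
\item Since $\pi_1<\pi_K$, every occurrence satisfies $\sigma(e_{i-1})<\sigma(e_i)$. Hence $\sigma(e_0)<\cdots<\sigma(e_j)$, and the $w_i$ appear once each in increasing order (A1).
\item Lemma~\ref{lemma-basic} applied to the $i$th occurrence gives the multiplicities in A5.
\item For A2--A4, a letter $x_r$ means a value below $\sigma(e_{r-1})$, $y_r$ a value in $(\sigma(e_{r-1}),\sigma(e_r))$, and $z_r$ a value above $\sigma(e_r)$. Since the boundary values increase, a letter sitting between $w_{i-1}$ and $w_i$ has value in $(\sigma(e_{i-1}),\sigma(e_i))$. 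This forces it to be $x_r$ with $r-1\ge i$, or $y_i$, or $z_r$ with $r\le i-1$, which is exactly A4.
\item The extreme cases $i=0$ (only $x$'s before $w_0$) and $i=j+1$ (only $z$'s after $w_j$) give A2 and A3.
\end{itemize}

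For the inverse, start from a word $u\in\clw_{1,\pi,j}$. Assign value $t$ to the $t$th letter of $u$, so that the letter in place $t$ of $u$ is what receives value $t$. Then assign positions:
\begin{itemize}
\item $w_i$ goes to position $e_i$.
\item For each $i$, the $K-2$ letters of $u$ from $\{x_i,y_i,z_i\}$, in the order they appear in $u$ (increasing value), are placed into the positions of $\widehat I_i$. The placement is the one dictated by $\pi$: the position $a+\ell$ with $1\le\ell\le K-2$ receives the $(\pi_{\ell+1}-1)$th smallest of these letters, counting the boundary letter $w_{i-1}$ among them.
\end{itemize}
More cleanly, the restriction of $\sigma$ to $I_i$ must be order isomorphic to $\pi$, and the relative order of all $K$ values in $I_i$ is already known from $u$. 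By A5, exactly $\pi_1-1$ interior values lie below $w_{i-1}$ and $K-\pi_K$ lie above $w_i$. So the ranks of $w_{i-1}$ and $w_i$ among the $K$ values are $\pi_1$ and $\pi_K$, and there is a unique way to place the interior values so that the occurrence is $\pi$. Positions of distinct blocks are disjoint, so this defines a permutation $\sigma=\Psi(u)$ with occurrences of $\pi$ on every $I_i$.

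The main obstacle, and the step to verify carefully, is the consistency claim just used: the letter labels in $u$ really do encode the relation between each interior value and the two boundary values of its own block. For example, an $x_i$ must truly lie below $w_{i-1}$ in $u$. This is exactly what A1--A4 guarantee. By A4, $x_i$ can only occur in a gap between $w_{r-1}$ and $w_r$ with $i\ge r+1$, i.e.\ before $w_{i-1}$. Likewise $y_i$ occurs only between $w_{i-1}$ and $w_i$, and $z_i$ only after $w_i$.

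With this established, $\Phi(\Psi(u))=u$ is immediate, because the label of each interior letter is recovered from its position relative to the boundary letters. $\Psi(\Phi(\sigma))=\sigma$ holds because within each block the positions are uniquely determined by $\pi$ and the value order. Hence $\Phi$ is a bijection.
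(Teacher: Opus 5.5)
Your proposal is correct and follows the paper's proof essentially step by step. It uses the same labelling of the value $\sigma(k)$ by $w_i$ or $x_i,y_i,z_i$, the same verification of A1--A5 from the increasing boundary values and Lemma~\ref{lemma-basic}, and the same inverse, reading A2--A4 as forcing each $x_i$ before $w_{i-1}$, each $y_i$ between $w_{i-1}$ and $w_i$, and each $z_i$ after $w_i$, so the boundary values have ranks $\pi_1$ and $\pi_K$ and each block is filled uniquely. The only slip is your explicit rule ``the $(\pi_{\ell+1}-1)$th smallest, counting $w_{i-1}$'', which is wrong in general (position $a+\ell$ gets rank $\pi_{\ell+1}$ among all $K$ values of the block). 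Your ``more cleanly'' rank argument replaces it correctly.
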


\begin{proof} Given $\sigma\in\cls_{\pi}^1(j)$, construct a word $W$
as follows. For $0\leq i\leq j$, put
$$
W_{\sigma(e_i)}:=w_i.
$$
For each $i\in[j]$ and $k\in\widehat I_i$, put in position $\sigma(k)$
the letter
$$
\begin{cases} x_i&\text{ if }\sigma(k)<\sigma(e_{i-1}),\\ y_i &\text{
if }\sigma(e_{i-1})<\sigma(k)<\sigma(e_i),\\ z_i &\text{ if }
\sigma(e_i)<\sigma(k).\\
\end{cases}
$$
(In other words, in terms of the underlying permutation, $x_i$, $y_i$,
and $z_i$ record whether an interior value of the $i$th occurrence
lies below its first boundary value, between its two boundary values,
or above its second boundary value.)

The blocks
$$
\{e_0\},\widehat I_1,\{e_1\},\ldots,\{e_{j-1}\},\widehat I_j,\{e_j\}
$$
partition the domain of $\sigma$. Since $\sigma$ is a permutation, the
construction assigns exactly one letter to each position of $W$. Thus
$W$ is a word of length $jK-j+1$ on $\cla_j$.

Since $\pi_1<\pi_K$, we have
$$
\sigma(e_0)<\sigma(e_1)<\cdots<\sigma(e_j),
$$
so A1 holds. If a value $v<\sigma(e_0)$, then $v$ is not a boundary
value and therefore $v=\sigma(k)$ for some $k\in\widehat I_r$. Since
$$
v<\sigma(e_0)\leq \sigma(e_{r-1}),
$$
the position $v$ in $W$ is marked $x_r$, proving A2. Similarly, if
$v>\sigma(e_j)$, then $v=\sigma(k)$ for some $k\in\widehat I_r$ and
$$
v>\sigma(e_j)\geq \sigma(e_r),
$$
so the position $v$ is marked $z_r$, proving A3.

Now suppose
$$
\sigma(e_{i-1})<v<\sigma(e_i)
$$
for some $1\leq i\leq j$. Again, $v=\sigma(k)$ for some $k\in\widehat
I_r$. If $r<i$, then
$$
v>\sigma(e_{i-1})\geq \sigma(e_r),
$$
so the position $v$ is marked $z_r$. If $r=i$, it is marked $y_i$. If
$r>i$, then
$$
v<\sigma(e_i)\leq \sigma(e_{r-1}),
$$
so it is marked $x_r$. This proves A4. Finally, A5 follows from
Lemma~\ref{lemma-basic}. Hence $W\in\clw_{1,\pi,j}$.

Conversely, let $W\in\clw_{1,\pi,j}$. Since positions in $W$ are
indexed by the values $1,\ldots,jK-j+1$, let $v_i$ denote the position
marked $w_i$. For each $i\in[j]$, let $V_i$ be the set consisting of
$v_{i-1}$, $v_i$, and all values whose positions in $W$ are marked
$x_i$, $y_i$, or $z_i$. By A5, $|V_i|=K$.

Reading A2--A4 as an exhaustive description of the letters permitted
in each interval determined by $w_0,\ldots,w_j$, every occurrence of
$x_i$ lies strictly to the left of $w_{i-1}$, every occurrence of
$y_i$ lies strictly between $w_{i-1}$ and $w_i$, and every occurrence
of $z_i$ lies strictly to the right of $w_i$. Hence all values marked
$x_i$ are smaller than $v_{i-1}$, all values marked $y_i$ lie strictly
between $v_{i-1}$ and $v_i$, and all values marked $z_i$ are larger
than $v_i$. By A5, in the increasing ordering of $V_i$, the value
$v_{i-1}$ therefore has rank $\pi_1$ and $v_i$ has rank $\pi_K$.

There is therefore a unique way to assign the values in $V_i$ to the positions
in $I_i$ so that the resulting word on $I_i$ is order-equivalent to $\pi$, with
$e_{i-1}$ receiving~$v_{i-1}$ and $e_i$ receiving~$v_i$. These assignments agree
on consecutive overlaps because both assign the common position~$e_i$ the value
$v_i$. Moreover, every value whose position in $W$ is marked $x_i$, $y_i$,
or~$z_i$ belongs to exactly one set $V_i$, while $v_i$ belongs to $V_i$ and
$V_{i+1}$ for $1\leq i\leq j-1$, with~$v_0$ belonging only to $V_1$ and $v_j$
only to $V_j$. Thus the overlaps among the sets $V_i$ agree exactly with the
overlaps among the sets $I_i$. Since every position of $W$ carries one of these
letters, the local assignments use every value in $[jK-j+1]$ exactly once and
define a permutation~$\sigma$. By construction, the restriction of $\sigma$ to
each $I_i$ is order-equivalent to $\pi$, so $\sigma\in\cls_{\pi}^1(j)$.

The two constructions are mutually inverse, completing the
proof.\end{proof}

\begin{proof}[Proof of Theorem~\ref{thm-gf-1-packing}] By A1, every
word $W\in\clw_{1,\pi,j}$ has a unique decomposition
$$
W=U_0w_0U_1w_1\ldots U_jw_jU_{j+1},
$$
where the $U_i$ are possibly empty words. Conditions A2--A4 state
precisely that the letters allowed in $U_0$ are $x_1,\ldots,x_j$,
those allowed in $U_i$ for $1\leq i\leq j$ are
$$
z_1,\ldots,z_{i-1},y_i,x_{i+1},\ldots,x_j,
$$
and those allowed in $U_{j+1}$ are $z_1,\ldots,z_j$.

Each $B_i$ has zero constant term, so
$$
\frac{1}{1-B_i}=\sum_{t\geq 0}B_i^t
$$
is a well-defined element of
$$
\mathbb Z[[x_1,\ldots,x_j,y_1,\ldots,y_j,z_1,\ldots,z_j]].
$$
In turn, the coefficient of a monomial in $B_i^t$ is the number of
words of length $t$ on the corresponding alphabet with the prescribed
multiplicities. Consequently, $(1-B_i)^{-1}$ is the multivariate
generating function for the possible words $U_i$. Since the
decomposition above is unique, the product
$$
\prod_{i=0}^{j+1}\frac{1}{1-B_i}
$$
is therefore the multivariate generating function for words satisfying
A1--A4, where the letters $w_0,\ldots,w_j$ contribute no variables.

Finally, extracting the coefficient of
$$
\bx_j^{\pi_1-1}\by_j^{\pi_K-\pi_1-1}\bz_j^{K-\pi_K}
$$
imposes A5, and the result follows by Lemma~\ref{lemma-bijection}.
\end{proof}

\subsection{General recurrence}

We now give a recursive formula for computing the number of
permutations of a given length that are covered by a given $K$-pattern
with overlaps $m\le K/2$.  For simplicity we assume the $m$-prefix,
and thus the $m$-suffix, of the pattern in question to be increasing,
as Lemma~\ref{lem-overlap-pref-suff} shows that the theorem also
applies if we permute the prefix and suffix in the same way.  The
following remark explains the treatment of separated components.

The following theorem first appeared in the authors' extended abstract
for the {\sc{fpsac}}-conference 2023~\cite{BKS-FPSAC-2023}.  For
$m=1$, the corresponding recurrence, with the first rather than the
last boundary value recorded, appears as (8) in the proof of
Theorem~28 in~\cite{Dotsenko-Khoroshkin-2013}. Their statement
concerns patterns whose distinct occurrences cannot overlap in more
than one position. Here only the prescribed overlaps have size one;
additional occurrences are allowed.

\begin{theorem}\label{thm-main-formula} Let $m\le K/2$ and let $\pi$
be a $K$-pattern with $m$-prefix $\bp$ and $m$-suffix $\bs$, where
$\bp$ and $\bs$ are increasing and $\bp_1<\bs_1$. Put $n_j=K+j(K-m)$
for $j\ge0$.

Let $u_1u_2\ldots u_{2m}$ be the concatenated word $\bp\bs$, and let
$\tau_i$ be the place of the $i$th smallest letter in $\bp\bs$. Let
$\tau_0=0$, $\tau_{2m+1}=2m+1$, and let $u_0=0$ and $u_{2m+1}=K+1$.

For an increasing word $L=\ell_{m+1}\ldots\ell_{2m}$ in $[n_j]$, let
$g_j(L)$ be the number of permutations of size $n_j$ that $m$-pack
$j+1$ occurrences of $\pi$ and end with $L$. Set $g_j(L)=0$ for every
other integer $m$-tuple $L$.

Then $g_0(\bs)=1$, $g_0(L)=0$ if $L\ne\bs$, and for $j\ge1$ we have
\begin{equation}\label{eq-master}
g_j(L)=\sum_{(\ell_1,\ldots,\ell_m)\in\cla_j(L)}
g_{j-1}(\ell'_1,\ldots,\ell'_m) \prod_{i=0}^{2m}
\binom{\ell_{\tau_{i+1}}-\ell_{\tau_i}-1}
{u_{\tau_{i+1}}-u_{\tau_i}-1},
\end{equation} where $\ell'_i=\ell_i-(u_i-i)$ for $1\le i\le m$, and
$\cla_j(L)$ is the set of sequences $\{(\ell_1,\ldots,\ell_m)$
satisfying
$$
0<\ell_{\tau_1}<\cdots<\ell_{\tau_{2m}}<n_j+1\}.
$$
In the product, $\ell_0=0$ and $\ell_{2m+1}=n_j+1$.  Here $\binom
uv=0$ if $u<v$.

Consequently, the number $h_j=|\cls_\pi^m(j+1)|$ of such packings is
\begin{equation}\label{eq-master-sum}
h_j=\sum_{1\le\ell_{m+1}<\cdots<\ell_{2m}\le n_j}
g_j(\ell_{m+1},\ldots,\ell_{2m}).
\end{equation}
\end{theorem}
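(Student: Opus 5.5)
The plan is to build a packing with $j+1$ occurrences by adjoining one new occurrence to a packing with $j$ occurrences, and to count how the old entries get relabeled. Let $\sigma\in\cls_\pi^m(j+1)$ have size $n_j$, and split it as $\sigma=\sigma'\,\omega\,\rho$ with $\rho$ the final $m$-block (values $L=\ell_{m+1}<\cdots<\ell_{2m}$) and $\omega$ the last interfix block. The last occurrence then occupies the $m$-block immediately before $\omega$ (values $\ell_1<\cdots<\ell_m$), followed by $\omega$ and $\rho$. Deleting $\omega\rho$ and standardizing the remaining entries gives a permutation $\bar\sigma$ of size $n_{j-1}$. This $\bar\sigma$ $m$-packs $j$ occurrences, because order isomorphism of each earlier window survives standardization. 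The final $m$-block of $\bar\sigma$ consists of the standardized values of $\ell_1,\ldots,\ell_m$.

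The key step is to show that, given $L$ and $(\ell_1,\ldots,\ell_m)$, the data decouple. Choosing $\bar\sigma$ with a prescribed end and choosing the set of values placed in $\omega$ are independent, and the binomial product counts the second choice. The last window must be order isomorphic to $\pi$, with prefix values $\ell_1,\ldots,\ell_m$, suffix values $L$, and interfix values filling in around them. Write $\pi=\bp\bw\bs$ and view $u=\bp\bs$. The $2m$ letters $u_{\tau_1}<\cdots<u_{\tau_{2m}}$ cut $[K]$ into $2m+1$ gaps; the $i$th gap contains $u_{\tau_{i+1}}-u_{\tau_i}-1$ letters of $\bw$, with the conventions $u_0=0$, $u_{2m+1}=K+1$. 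In $\sigma$ the boundary values $\ell_{\tau_i}$ must appear in the same relative order, which is exactly the condition defining $\cla_j(L)$. The entries of $\omega$ in the $i$th gap are then any subset of size $u_{\tau_{i+1}}-u_{\tau_i}-1$ of the $\ell_{\tau_{i+1}}-\ell_{\tau_i}-1$ values strictly between the two boundary values. The remaining values in that interval go to $\sigma'$. Once the subsets are fixed, $\omega$ is determined uniquely by $\bw$. This accounts for the factor $\prod_i\binom{\ell_{\tau_{i+1}}-\ell_{\tau_i}-1}{u_{\tau_{i+1}}-u_{\tau_i}-1}$, and the binomial vanishes automatically when the gap is too small.

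Next I verify the relabeling $\ell'_i=\ell_i-(u_i-i)$. The letter $\ell_i=\sigma$-value of prefix letter $u_i$ of the last window. The values below $\ell_i$ that are removed are the entries of $\omega\rho$ smaller than it. By the order isomorphism with $\pi$, these are exactly the letters of $\bw\bs$ below $u_i=\bp_i$ in $\pi$. That count is $u_i-1-(i-1)=u_i-i$, since $i-1$ smaller letters lie in the increasing prefix. Hence the standardized value is $\ell_i-(u_i-i)$, independent of everything else. Conversely, start from any $\bar\sigma$ ending in $(\ell'_1,\ldots,\ell'_m)$ and any choice of subsets. Insert values by inverting the standardization, uniquely given the full value set of $\omega\rho$, and append $\omega\rho$. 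This yields a valid $\sigma$, with every earlier window unchanged in order type. The construction is bijective, which proves \eqref{eq-master}. The base case $g_0$ is trivial: the only permutation is $\pi$ itself, which ends with $\bs$. Summing over $L$ gives \eqref{eq-master-sum}.

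The main obstacle is the bijectivity bookkeeping. First, every placement of the last window's values in $[n_j]$ consistent with $\cla_j(L)$ must standardize to a legitimate end-tuple of a size-$n_{j-1}$ packing. Second, the unchosen values within each gap must be exactly the values of $\sigma'$ in that gap, with no hidden constraint linking them. I would settle this by noting that $\sigma'$ carries no constraint relative to $\omega\rho$ except through the shared block. Here $m\le K/2$ is essential, since it keeps the shared block disjoint from $\rho$.
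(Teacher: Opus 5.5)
Your proposal is correct and follows essentially the same route as the paper's proof. Both remove the last occurrence's interfix and final $m$-block, count the admissible value sets for the interfix gap by gap via the binomial product, and standardize the remainder, using the increasing prefix to get the shift $u_i-i$. The paper phrases this as first choosing the letters of $\oc_M$ and then building $\sigma'$ on the leftover alphabet $R$, which is the same bijection read in the other direction.
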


\begin{proof} We prove this by induction on $j$, the base case, when
$j=0$, being $g_0(L)=1$ if and only if $L=\bs$, since there is
precisely one permutation of size $K$ with an occurrence of the
pattern $\pi$, namely the permutation consisting of $\pi$ itself.
  
  In order to construct a permutation of size $n_j$ covered by $\pi$
with overlaps~$m$ and ending with
$L=\ell_{m+1}\ell_{m+2}\ldots\ell_{2m}$, let $\oc=\oc_F\oc_M\oc_L$ be
the word consisting of the last occurrence of~$\pi$ in such a
permutation $\sigma$, and such that both $\oc_F$ and $\oc_L$ have
length $m$. Thus $\oc_F=\ell_1\cdots\ell_m$ and $\oc_L=L$, and
$(\ell_1,\ldots,\ell_m)\in\cla_j(L)$. We first analyze the possible
choices of letters in $\sigma$ that constitute $\oc_M$.

  Let $a<b$ be two letters in $\oc$ that belong to the union of
$\oc_F$ and $\oc_L$ and such that no letter $c$ in $\oc_F$ or $\oc_L$
satisfies $a<c<b$.  Thus, $a=\ell_{\tau_i}$ and $b=\ell_{\tau_{i+1}}$
for some $i$, where $\tau$ is the permutation defined in the statement
of the theorem.  However, there may be letters in $\oc_M$ that lie
between $a$ and $b$ in size.  In fact, there must be precisely
$u_{\tau_{i+1}}-u_{\tau_i}-1$ such letters in $\oc_M$, in order for
$\oc$ to be an occurrence of $\pi$.  Moreover, these letters can be
any of those that lie strictly between $\ell_{\tau_i}$ and
$\ell_{\tau_{i+1}}$ in size. In addition, there may be letters in
$\oc_M$ that are smaller or greater than all letters in $\oc_F$ and
$\oc_L$, which is taken care of by the definitions of
$\tau_0,\tau_{2m+1}$ etc. There are therefore
$\binom{\ell_{\tau_{i+1}}-\ell_{\tau_i}-1}{u_{\tau_{i+1}}-u_{\tau_i}-1}$
possible choices for these letters in $\oc_M$.  Having thus chosen the
letters to form $\oc_M$ there is only one way to order them so that
they will form the part of an occurrence of $\pi$ that lies between
the $m$-prefix and the $m$-suffix of that occurrence.

In order to complete the desired permutation $\sigma$, we must
construct a word $\sigma'$ of size $n_{j-1}$, with $\pi$-overlaps $m$,
and ending in $\ell_1,\ldots,\ell_m$.  Its alphabet is the set $R$
obtained from $[n_j]$ by removing the letters of $\oc_M$ and
$\oc_L$. This leaves $n_{j-1}$ letters, including all of $\oc_F$.

To do so, we define $\sigma''$ as the permutation of $[n_{j-1}]$ order
isomorphic to $\sigma'$. Standardization preserves every prescribed
occurrence.  $\sigma''$ has a suffix $\ell_1-(u_1-1) \ldots\ell_m
-(u_m-m)$ because among the $u_i-1$ letters of $\oc$ smaller than
$\ell_i$, exactly $i-1$ belong to the increasing prefix $\oc_F$.
Hence there are exactly $u_i-i$ letters in $\oc_M$ and $\oc_L$ smaller
than $\ell_i$.  By the inductive hypothesis, there are precisely
$g_{j-1}(\ell_1-(u_1-1),\ldots,\ell_m-(u_m-m))$ such $\sigma''$.

Each such $\sigma''$ uniquely determines $\sigma'$ on the alphabet
$R$, and appending $\oc_M\oc_L$ gives the required
packing. Conversely, the packing uniquely determines all these
choices. Since the choices in the disjoint intervals are independent,
their numbers multiply. Summing gives \eqref{eq-master}, and
\eqref{eq-master-sum} follows by partitioning packings according to
their final $m$ letters.
\end{proof}

\begin{remark}[Separated
components]\label{remark-separated-components} In cases where not all
pairs of successive prescribed occurrences are required to overlap,
this breaks the problem into smaller pieces of overlapping occurrences
--- whose enumeration can be computed via
Theorem~\ref{thm-main-formula} --- and segments of letters in a
permutation that are not required to belong to an occurrence. We then
need to multiply by the appropriate multinomial coefficient,
corresponding to choices of letters for each piece, and the
appropriate factorial for the letters not required to belong to an
occurrence, since those letters can be ordered arbitrarily.
\end{remark}

The procedure of Theorem~\ref{thm-main-formula} is computationally
efficient, in the sense that for a fixed $K$-pattern and a fixed $m$,
$h_j$ can be computed in time polynomial in $j$.  However, for $j>1$,
the methods developed in the next section, turning this computation
into counting linear extensions of posets, are more efficient in our
implementations.

\section{Pattern-packing posets}\label{sec-posets}

In Table~\ref{table-contain} we presented enumerative results for a
few patterns whose enumeration by combinatorial arguments is fairly
simple in the cases where we have explicit formulas, all of which are
presented in greater generality later in this section. For most
patterns, however, computing the enumeration is far from
straightforward, as they translate to very different combinatorial
problems, requiring distinct arguments and often diverse
techniques. Different patterns also pose vastly different levels of
challenge.  While the explicit results in Table~\ref{table-contain}
are low-hanging fruit, the other sequences, apart from 14325 with
overlap 2, were not known at the time of initiating this study (at
least not found in the \OEIS~\cite{OEIS}), and that is the case for
the vast majority of sequences we have encountered.

In this section we show that the permutations $(m,j)$-packing a
pattern $\pi$ are in one-to-one correspondence with the \emph{linear
extensions} of a poset determined by $\pi$, $m$ and $j$, where the
poset for $j$ is an induced subposet of the one for $j+1$.  A linear
extension of a poset $\clp$ is a total ordering $\clo$ of its elements
so that if $x<y$ in $\clp$ then $x$ precedes $y$ in $\clo$.

This correspondence was previously described for certain classes of
patterns by Dotsenko and Khoroshkin~\cite{Dotsenko-Khoroshkin-2013}
and by Elizalde and Noy~\cite{Elizalde-Noy-2012} and used to derive
generating functions counting packings of certain patterns.  Here we
elicit several general structural properties of these posets and
characterize completely those patterns the Hasse diagrams of whose
posets are trees or, in particular, chains (total orders).  These
structural properties in many cases suggest effective techniques for
enumerating linear extensions, which often explains why permutations
covered by different patterns have the same enumeration.

We begin by observing that the act of packing a given number of
occurrences of a consecutive permutation pattern $\pi$ forces
relations between the elements of the permutation thus covered by
$\pi$. For example, recalling Definition~\ref{def-order-isomorphism}
of the order isomorphism of permutations, any permutation $\sigma$
1-packing two occurrences of $\pi=2413$ must simultaneously satisfy:
$$
\sigma_1\,\sigma_2\,\sigma_3\,\sigma_4\sim \pi, ~\mbox{which is
equivalent to}~~ \sigma_3<\sigma_1<\sigma_4<\sigma_2 ~\mbox{(first
occurrence)}
$$
and also
$$
\sigma_4\,\sigma_5\,\sigma_6\,\sigma_7 \sim \pi, ~~\mbox{which is
equivalent to }~ \sigma_6<\sigma_4<\sigma_7<\sigma_5 ~\mbox{(second
occurrence)}.
$$
This implies, for example, that $\sigma_2$ must be greater than
$\sigma_6$, because $\sigma_6<\sigma_4<\sigma_2$.

As before, let $\cls_{\pi}^m(j)$ denote the set of permutations
$m$-packing $j$ occurrences of a $K$-pattern $\pi$.  Given $\pi$, any
permutation $\sigma\in \cls_{\pi}^m(j)$ (trivially) satisfies the
following equivalences, where $d=K-m$:

\begin{align*} &\sigma_1\,\ldots\,\sigma_K\sim \pi\notag\\
&\sigma_{1+d}\, \ldots\,\sigma_{K+d}\sim \pi\notag\\
&~~~~~~~~~~~~~\vdots\\
&\sigma_{1+(j-1)d}\ldots\sigma_{K+(j-1)d}\sim\pi\notag.
\end{align*} These statements are straightforwardly seen to correspond
to the following inequalities.

\begin{lemma}\label{lemma-inequalities} A permutation $\sigma$
$(m,j)$-packs a $K$-pattern $\pi$ if and only if $\sigma$ has length
$K+(j-1)(K-m)$ and satisfies the following $j$ inequalities, where
$\tau=\pi^{-1}$ and $d=K-m$:
\begin{align}
&\sigma(\tau_1)<\sigma(\tau_2)<\cdots<\sigma(\tau_K)\notag\\
&\sigma(\tau_1+d)<\sigma(\tau_2+d)<\cdots<\sigma(\tau_K+d)\notag\\
&~~~~~~~~~~~~~~~~~~~~~~\vdots\label{eqs-poset}\\
&\sigma(\tau_1+(j-1)d)<\sigma(\tau_2+(j-1)d)<\cdots<\sigma(\tau_K+(j-1)d)\notag
\end{align}
\end{lemma}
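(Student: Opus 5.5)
The plan is to reduce the statement to the single-occurrence case and then apply it once for each prescribed occurrence. By Definition~\ref{def-packing}, $\sigma$ $(m,j)$-packs $\pi$ exactly when $\sigma$ has length $n=K+(j-1)(K-m)$ and, for each $i\in[j]$, the consecutive window
$$\sigma_{1+(i-1)d}\ldots\sigma_{K+(i-1)d}$$
is order isomorphic to $\pi$. The length conditions in the two formulations are identical, so it suffices to fix one window and show that it is order isomorphic to $\pi$ if and only if the corresponding chain of inequalities in \eqref{eqs-poset} holds.

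First I would treat a single window. Let $w=w_1\ldots w_K$ be a word with distinct letters. By Definition~\ref{def-order-iso}, $w\sim\pi$ means that $w_a<w_b$ holds exactly when $\pi_a<\pi_b$. Since $\pi$ is a permutation, this says that the letter of rank $r$ in $w$ sits at the position where $\pi$ has the value $r$, namely position $\tau_r=\pi^{-1}(r)$. That is precisely the statement
$$w_{\tau_1}<w_{\tau_2}<\cdots<w_{\tau_K}.$$

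For the converse, suppose the chain holds. Then the rank of $w_{\tau_r}$ among the letters of $w$ is $r=\pi_{\tau_r}$. Since $\tau$ is a bijection of $[K]$, every position $a$ has the form $\tau_r$, so the rank of $w_a$ is $\pi_a$ for every $a$. Comparing ranks then gives $w_a<w_b$ if and only if $\pi_a<\pi_b$.

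Next I would apply this to the $i$th window, with $w_k=\sigma(k+(i-1)d)$. This gives exactly the $i$th line $\sigma(\tau_1+(i-1)d)<\cdots<\sigma(\tau_K+(i-1)d)$ of \eqref{eqs-poset}. Taking the conjunction over $i=1,\ldots,j$ yields the equivalence claimed in the lemma.

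There is no real obstacle here. The only point needing care is the indexing convention: one must check that $\tau=\pi^{-1}$, rather than $\pi$ itself, is the correct permutation to use, and that the shift by $(i-1)d$ acts on positions and not on values.
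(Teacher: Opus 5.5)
Your proposal is correct and follows the paper's approach: the paper asserts without further argument that each window condition $\sigma_{1+(i-1)d}\ldots\sigma_{K+(i-1)d}\sim\pi$ corresponds to the $i$th chain in \eqref{eqs-poset}. Your rank argument with $\tau=\pi^{-1}$, applied to each shifted window, supplies the missing details of that translation.
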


\begin{definition}[Pattern-packing posets]\label{def-patt-pack-poset}
Given a $K$-pattern $\pi$, overlap $m$ and number of occurrences $j$,
the poset $\ppm(j)$ defined by the inequalities \eqref{eqs-poset} is
said to be a \emph{pattern-packing poset}, or just a \emph{pattern
poset}, for $\pi$.
\end{definition}

The relationship between pattern-packing permutations and
pattern-packing posets follows immediately from
Lemma~\ref{lemma-inequalities}:

\begin{theorem}\label{thm-numoccs-linext} Given a pattern $\pi$,
overlap $m$ and number of occurrences $j$, the set of linear
extensions of the poset $\ppm(j)$ is in natural bijection with
$\cls_{\pi}^m(j)$.
\end{theorem}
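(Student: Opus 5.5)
The plan is to make the poset precise and then read the bijection directly off Lemma~\ref{lemma-inequalities}. Set $n=K+(j-1)(K-m)$ and $d=K-m$. We regard $\ppm(j)$ as the set of positions $[n]$, ordered by the transitive closure of the covering relations
$$
\tau_r+(i-1)d \;\prec\; \tau_{r+1}+(i-1)d, \qquad 1\le r<K,\ 1\le i\le j,
$$
where $\tau=\pi^{-1}$. The first thing to check is that this relation is acyclic, so that its transitive closure really is a partial order. This follows once we know that some permutation satisfies all of \eqref{eqs-poset}. Under the standing assumption $\bp\sim\bs$, such a permutation can be built by hand, or taken from Theorem~\ref{thm-main-formula} and Theorem~\ref{thm-gf-1-packing}, which show $\cls_\pi^m(j)\neq\emptyset$. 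Alternatively, we can define $\ppm(j)$ to be empty of linear extensions when the relations are inconsistent, and then both sides are empty. This consistency point is the only mildly delicate step.

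Next we fix the meaning of a linear extension. A linear extension is a bijection $\lambda:[n]\to[n]$ such that $x\prec y$ implies $\lambda(x)<\lambda(y)$; here $\lambda(x)$ is the rank of position $x$ in the total order. Given $\sigma\in\cls_\pi^m(j)$, we take $\lambda=\sigma$. By Lemma~\ref{lemma-inequalities}, $\sigma$ satisfies every chain in \eqref{eqs-poset}. Since $<$ on integers is transitive, $\sigma$ then respects the transitive closure, so it is a linear extension.

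Conversely, let $\lambda$ be a linear extension and regard it as a permutation $\sigma=\lambda(1)\ldots\lambda(n)$ of length $n$. Each generating relation gives one of the inequalities in \eqref{eqs-poset}. Hence, by the converse direction of Lemma~\ref{lemma-inequalities}, $\sigma$ $(m,j)$-packs $\pi$.

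The two maps are both the identity on the underlying functions $[n]\to[n]$, so they are mutually inverse. This bijection is the natural one referred to in the statement, and it gives $N_\pi^m(j)=|\cll(\ppm(j))|$.
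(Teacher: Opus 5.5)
Your proposal is correct and is essentially the paper's argument, which simply reads the bijection off Lemma~\ref{lemma-inequalities}. The only difference is cosmetic: the paper regards a linear extension as the word listing the positions in increasing order, so it equals $\sigma^{-1}$, while you regard it as the rank function $\lambda=\sigma$, which is the same correspondence. Your attention to acyclicity is a reasonable addition that the paper leaves implicit under $\bp\sim\bs$. Note, however, that Theorems~\ref{thm-main-formula} and~\ref{thm-gf-1-packing} only cover $m\le K/2$ and $m=1$. For general $m$, rely instead on the direct inductive construction: fit $d$ new values around the last $m$ entries, which are order isomorphic to $\bs\sim\bp$. Alternatively, use your observation that both sides are empty when the relations are inconsistent.
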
 More precisely, the inverses of the permutations
$(m,j)$-packing a pattern $\pi$ \emph{are} exactly the linear
extensions of the poset $\ppmj$.

In what follows, we will frequently refer to the sequence of
inequalities in Lemma~\eqref{lemma-inequalities} as the
\emph{$j$-stack of $\ppmj$}, which we now define. Given a $K$-pattern
$\pi$ and overlap $m$, let $\tau$ be the inverse, written in one-line
notation, of $\pi$ (that is,\ if $\pi$ has $j$ in the $i$-th place
then $\tau$ has $i$ in the $j$-th place).  For $\ell\geq 0$, let
\begin{equation} \shift_\ell=s_{\ell1}s_{\ell2}\ldots s_{\ell K}
\end{equation} where $s_{\ell i}=\tau_i+\ell(K-m)$.  That is,
$\shift_0=\tau$ and, for $\ell>0$, $\shift_\ell$ is $\shift_{\ell-1}$
with each letter incremented by $(K-m)$.  Generically, by
Lemma~\ref{lemma-inequalities}, for any given $K$-pattern $\pi$ and
overlap $m$, letting $d=K-m$ we have:
\begin{equation}
\begin{array}{ccccccccc}
\shift_0&=&\tau_1&\tau_2&\tau_3&\ldots&\tau_K&\\
\shift_1&=&\tau_1+d&\tau_2+d&\tau_3+d&\ldots&\tau_K+d&\\
\shift_2&=&\tau_1+2d&\tau_2+2d&\tau_3+2d&\ldots&\tau_K+2d&\\ &\vdots
\end{array}
\end{equation}

\begin{definition}\label{def-stack} We call the sequence
$\shift_0,\shift_1,\ldots,\shift_{j-1}$ the \emph{$j$-stack} for
$\ppmj$ and refer to $\shift_0,\shift_1,\ldots,\shift_{j-1}$ as
\emph{lines of the stack}.
\end{definition}

It follows that the poset $\ppmj$ consists of the elements of the
$j$-stack for $\pi$, with order generated by $x<y$ whenever $x$
appears before $y$ on some line.  Here $<$ denotes the poset order,
not numerical order of the labels.

\begin{example}\label{ex-stack} If $\pi=143625$, and overlap is $m=2$,
so $d=4$, then $\tau=\pi^{-1}=153264$, so
$$
\begin{array}{cccccccccc} \shift_0&=&1&5&3&2&6&4\\
\shift_1&=&5&9&7&6&10&8\\ \shift_2&=&9&13&11&10&14&12
\end{array}
$$
and the poset $\pp{2}(3)$ is pictured in Figure~\ref{fig-143625}.
\end{example}

\begin{figure}[h] \hspace*{2em}
\begin{tikzpicture} \draw node { $
\begin{array}{cccccccccc} \shift_0&=&1&5&3&2&6&4\\
\shift_1&=&5&9&7&6&10&8\\ \shift_2&=&9&13&11&10&14&12\\[20ex]
\end{array} $ };
\end{tikzpicture} \hspace*{2em}
\begin{tikzpicture}[scale=0.4, line width=1pt]
\def\vertex#1,#2,#3,{\draw [blue, fill=white] (#1,#2) circle
[radius=0.7] node [black] {$#3$};}
  \begin{scope}[shift={(0,0)}] \draw [blue] (3,1)--(3,4)--(3,7); \draw
[blue] (6,2)--(6,5)--(6,8)--(6,11); \draw [blue]
(9,3)--(9,6)--(9,9)--(9,12); \draw [blue] (12,10)--(12,13); \draw
[blue] (0,0)--(3,1)--(6,2)--(9,3); \vertex0,0,1, \vertex3,1,5,
\vertex6,2,9, \vertex9,3,13, \vertex3,4,3, \vertex6,5,7,
\vertex9,6,11, \draw [blue] (3,7)--(6,8)--(9,9)--(12,10);
\vertex3,7,2, \vertex6,8,6, \vertex9,9,10, \vertex12,10,14,
\vertex6,11,4, \vertex9,12,8, \vertex12,13,12,
\end{scope}
\end{tikzpicture}
\caption{\label{fig-143625} The Hasse diagram of the poset $\ppm(3)$
for three occurrences of the pattern $\pi=143625$ (whose inverse is
153264) with overlap 2 and its defining stack.}
\end{figure} In Example~\ref{ex-stack}, the letters 5 and 6 that
appear in both $\shift_0$ and $\shift_1$ (but not in $\shift_2$) have
different sets of letters between them in $\shift_0$ and $\shift_1$
(namely $3,2$ and $9,7$, respectively).  This implies that the
corresponding poset is not a total order.  In fact, we will show (see
Theorem~\ref{thm-posets-trees}) that this is enough to deduce that the
poset is not a tree, when seen as the graph in
Figure~\ref{fig-143625}.  The \emph{Hasse diagram} of a poset $\clp$
is a directed graph whose elements are the elements of $\clp$ and
where there is an edge from $x$ to $y$ if and only if $x$ is
\emph{covered} by $y$, that is, $x<y$ and there is no $z$ so that
$x<z<y$. Instead of indicating the orientation of edges by arrows, a
Hasse diagram is typically drawn so that an edge from $x$ to $y$ goes
upwards from $x$ to $y$.  In Figure~\ref{fig-143625}, 5 is covered by
3 and 9, and 6 \emph{covers} 7 and 2, for example.  Thus, the Hasse
diagram of $\ppm(\ell)$ has vertices consisting of all the letters
appearing in its $\ell$-stack, and the edges $x\rightarrow y$ if $x$
is covered by $y$. Note, however, that although $x$ must immediately
precede $y$ on some line of the stack in order to be covered by $y$,
this is not sufficient, since on a different line of the stack we may
have a letter $z$ between $x$ and $y$, as in Figure~\ref{fig-162534},
where 5 immediately precedes 6 and 7, respectively, but 6 does not
cover 5.  We state a couple of observations on the structure of a
stack. First, the letters from the first line that are repeated on the
second line will necessarily be the $m$ largest letters of $\shift_0$
and the $m$ smallest letters of $\shift_1$, namely
$K-m+1,K-m+2,\ldots,K$. In $\shift_0$, these appear in the places
determined by the $m$-suffix of $\pi$ (in the case of
Figure~\ref{fig-143625} in places 2 and 5) and in $\shift_1$ they
appear in the places determined by the $m$-prefix of $\pi$ (places 1
and 4).  Moreover, these repeated letters must appear in the same
order in $\shift_0$ and $\shift_1$, since their places are determined
by the values, respectively, of the prefix and suffix of $\pi$, which
are order isomorphic (if they didn't this would lead to a
contradiction in the poset).  This generalizes in the following
obvious way:
\begin{lemma}\label{lemma-same-order} Given a pattern $\pi$ and
overlap $m$, the letters appearing in both $\shift_i$ and
$\shift_{i+1}$ are the $m$ largest letters of $\shift_i$ and the $m$
smallest letters of $\shift_{i+1}$.  Letters that belong to both
$\shift_i$ and $\shift_\ell$, for any $i$ and $\ell$, must appear in
the same order in both.
\end{lemma}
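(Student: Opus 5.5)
The plan is to work directly from the definition of the stack: $\shift_\ell$ is the word $\tau_1+\ell d,\ldots,\tau_K+\ell d$ with $d=K-m$ and $\tau=\pi^{-1}$. Its content is the interval $[1+\ell d,\,K+\ell d]$ of consecutive integers.

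For the first claim, I intersect consecutive lines. The content of $\shift_i\cap\shift_{i+1}$ is $[1+(i+1)d,\,K+id]$. Since $K+id-(1+(i+1)d)+1=K-d=m$, this intersection has exactly $m$ elements. In $[1+id,K+id]$ these are the $m$ largest letters, and in $[1+(i+1)d,K+(i+1)d]$ they are the $m$ smallest. This part is pure interval arithmetic.

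For the second claim, I first handle consecutive lines. A letter $x=K-m+r+id$ with $1\le r\le m$ sits in $\shift_i$ at the position $\tau_{K-m+r}$. This is the place of the value $K-m+r$ in $\pi$, and since $\pi$ has no common letters outside the $m$-suffix, that place lies in the $m$-suffix. In $\shift_{i+1}$ the same letter is $r+(i+1)d$, at position $\tau_r$, the place of the value $r$ in $\pi$, which lies in the $m$-prefix.

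Consider two shared letters $x<y$ with indices $r<r'$ in this parametrization. In $\shift_i$ their relative order is the order of the places of the values $K-m+r$ and $K-m+r'$ in $\pi$, that is, the order of the $r$th and $r'$th smallest letters of $\bs$ within $\bs$. In $\shift_{i+1}$ their relative order is the order of the places of the $r$th and $r'$th smallest letters of $\bp$ within $\bp$. Because $\bp\sim\bs$ (the standing assumption), these agree. To nail this down I will check that the $m$ smallest letters of $\pi$ occupy the prefix and the $m$ largest occupy the suffix. This is not automatic in general, and it is the main obstacle.

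I expect the main obstacle to be exactly this hidden assumption. Without it, the shared letters need not line up with the prefix and suffix, and the stack would be inconsistent. The fix is to argue via the poset. If two lines ordered a shared pair oppositely, then \eqref{eqs-poset} would force $\sigma(a)<\sigma(b)$ and $\sigma(b)<\sigma(a)$, so $\cls_\pi^m(j)$ would be empty. This contradicts the standing assumption that packings exist, for example the one produced by Theorem~\ref{thm-main-formula}. I therefore plan to phrase the second claim under the paper's convention that packings exist, and to derive the agreement from consistency. Equivalently, for the $m$-overlapping positions, each occurrence forces the shared positions of $\sigma$ to be order isomorphic to both $\bp$ and $\bs$. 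Positions common to the occurrences starting at $1+id$ and $1+(i+1)d$ carry values that the two lines must both list in increasing $\sigma$-order, so the orders coincide.

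For arbitrary $i<\ell$, a letter in both $\shift_i$ and $\shift_\ell$ lies in every intermediate line, because the contents are intervals with monotone endpoints. The order of two such letters is then preserved line by line along $\shift_i,\shift_{i+1},\ldots,\shift_\ell$, using the consecutive case. This chaining step finishes the proof.
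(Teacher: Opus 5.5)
\emph{Verdict: genuine gap.} Your first claim is fine: intersecting the intervals $[1+id,K+id]$ and $[1+(i+1)d,K+(i+1)d]$ gives exactly the $m$ largest letters of one line and the $m$ smallest of the next. Your chaining from consecutive lines to arbitrary $\shift_i,\shift_\ell$ is also fine.

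The gap is in the consecutive case of the second claim, where you have swapped letters and places. The letter in place $q$ of $\shift_i$ is $\tau_q+id$, so a letter $x$ sits in place $\pi_{x-id}$ of $\shift_i$, not in place $\tau_{x-id}$. Hence the shared letter $x=K-m+r+id$ is in place $\pi_{K-m+r}=\bs_r$ of $\shift_i$. Written as $r+(i+1)d$, it is in place $\pi_r=\bp_r$ of $\shift_{i+1}$. In Example~\ref{ex-stack}, the letter $5$ is in place $2=\bs_1$ of $\shift_0$ and in place $1=\bp_1$ of $\shift_1$, whereas $\tau_5=6$. So for shared letters indexed $r<r'$, one line orders them by comparing $\bs_r$ with $\bs_{r'}$, the other by comparing $\bp_r$ with $\bp_{r'}$, and $\bp\sim\bs$ settles it at once. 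This is the paper's argument: the places of the repeated letters are given by the values of the suffix and prefix. The ``obstacle'' you identify, that the $m$ smallest values of $\pi$ must lie in the prefix and the $m$ largest in the suffix, is an artifact of the inversion. It fails for $143625$ with $m=2$, yet the lemma plainly holds there.

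Your proposed workaround does not repair this. The paper's standing convention is $\bp\sim\bs$, not $\cls_\pi^m(j)\neq\emptyset$; the paper only shows that $\bp\sim\bs$ is necessary for packings to exist. Theorem~\ref{thm-main-formula} does not supply existence either. It requires $m\le K/2$ and a normalized prefix and suffix, and it gives a recurrence for counts rather than producing a packing. To prove that $\bp\sim\bs$ forces packings to exist, you would build them one occurrence at a time. That needs exactly the fact that the $m$ shared places, ordered like $\bs$ by one occurrence, are compatible with the order $\bp$ demanded by the next. That is the statement you are proving, so the detour is circular. Your ``Equivalently'' sentence is close to the right idea. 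Phrase it in terms of the pattern values the shared places carry in the two occurrences, not the values of a hypothetical $\sigma$.
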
 The labels on $\shift_i$ form the integer interval
$[1+id,K+id]$.  Thus the lines containing a given label form an
interval of line indices.  Since incomparable elements cannot occur on
the same line, we obtain:
\begin{lemma}\label{lem-successive-lines} If an element $x$ appears in
$\shift_i$ and $\shift_\ell$ where $i<\ell$, then $x$ appears in
$\shift_k$ for all $k$ with $i<k<\ell$.  In particular, if $x$ and $y$
are incomparable elements in $\ppmj$ then either all lines in the
stack for $\ppmj$ containing $x$ must precede all lines containing
$y$, or vice versa.
\end{lemma}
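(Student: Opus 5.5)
The first claim follows from the observation, made just before the statement, that the labels of $\shift_k$ form exactly the integer interval $[1+kd,\,K+kd]$, where $d=K-m\ge1$. Since $\shift_k$ is $\tau$ with every letter increased by $kd$, and $\tau$ is a permutation of $[K]$, its content is $\{1+kd,\ldots,K+kd\}$. So $x$ appears in $\shift_k$ if and only if $1+kd\le x\le K+kd$, that is, if and only if $(x-K)/d\le k\le (x-1)/d$. The set of admissible $k$ is therefore the set of integers in a real interval, which is an interval of integers. Hence if $x$ lies in $\shift_i$ and $\shift_\ell$ with $i<\ell$, every $k$ with $i<k<\ell$ also satisfies both inequalities, and $x$ lies in $\shift_k$.

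For the second claim, I would use that every line $\shift_k$ is a chain in $\ppmj$. By Definition~\ref{def-patt-pack-poset} (equivalently Lemma~\ref{lemma-inequalities}), the letters of $\shift_k$ are totally ordered by their order of appearance on that line. Consequently, two incomparable elements $x,y$ never share a line. Let $[a_x,b_x]$ and $[a_y,b_y]$ be the sets of line indices containing $x$ and $y$; by the first part, each is an interval of integers. Incomparability means these intervals are disjoint. Two disjoint integer intervals satisfy $b_x<a_y$ or $b_y<a_x$, which is exactly the assertion that all lines containing $x$ precede all lines containing $y$, or vice versa.

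No step here is a real obstacle. The only point needing care is to state explicitly that each line is a chain, which is immediate from how the order on $\ppmj$ is generated, together with the interval description of the line contents.
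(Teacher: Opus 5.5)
Your proof is correct and follows the paper's own argument. The paper likewise observes that the labels of $\shift_i$ form the integer interval $[1+id,K+id]$, so the lines containing a given label form an interval of indices, and that incomparable elements cannot share a line. You have simply made the inequality $(x-K)/d\le k\le (x-1)/d$ and the disjoint-intervals step explicit.
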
 The following lemma will be useful in a couple of places.
\begin{lemma}\label{lemma-only-two-lines} Given a $K$-pattern $\pi$
and overlap $m$, if $m\le K/2$ then no letter will appear on more than
two lines of the stack, and two lines with a common letter must be
adjacent.
\end{lemma}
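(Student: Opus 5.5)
The proof is a direct computation with the labels of the stack. By construction, $\shift_i$ is a rearrangement of the consecutive integers $1+id,\ldots,K+id$, where $d=K-m$. So a letter $x$ appears on line $\shift_i$ exactly when
$$1+id\le x\le K+id,$$
that is, when $(x-K)/d\le i\le (x-1)/d$. The set of indices $i$ of lines containing $x$ is therefore the set of integers in a real interval of length $(K-1)/d$. This already gives that these lines form an interval of consecutive indices, which is the first half of Lemma~\ref{lem-successive-lines}.

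Next I would show that the assumption $m\le K/2$ means $d=K-m\ge K/2\ge m$, so $2d\ge K>K-1$. Hence the interval $[(x-K)/d,(x-1)/d]$ has length strictly less than $2$. It therefore contains at most two integers, and if it contains two they are consecutive.

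To finish, I would make this concrete rather than relying on the length estimate. Suppose $x$ lies on $\shift_i$ and $\shift_\ell$ with $\ell\ge i+2$. Then
$$x\le K+id\qquad\text{and}\qquad x\ge 1+\ell d\ge 1+(i+2)d.$$
Together these force $1+2d\le K$, i.e.\ $2(K-m)\le K-1$, i.e.\ $K+1\le 2m$, contradicting $m\le K/2$. So no letter lies on two lines whose indices differ by two or more. This proves both claims: no letter is on more than two lines, and any two lines sharing a letter are adjacent. Consistently with Lemma~\ref{lemma-same-order}, the shared letters between $\shift_i$ and $\shift_{i+1}$ are exactly the $m$ values $K-m+1+id,\ldots,K+id$.

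There is no real obstacle here. The only point needing care is getting the boundary inequality right: the argument needs $2d\ge K$, which is precisely where the hypothesis $m\le K/2$ enters. For $m>K/2$ one can have $2d<K$, and then a letter can appear on three lines.
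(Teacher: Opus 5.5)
Your proposal is correct and is essentially the paper's argument. The paper's proof is the single observation that the largest label $K+i(K-m)$ of $\shift_i$ is smaller than the smallest label $1+(i+2)(K-m)$ of $\shift_{i+2}$, which is exactly your concrete inequality; the interval-length discussion and the remark about $m>K/2$ are harmless extras.
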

\begin{proof} The greatest letter of $\shift_i$ is $K+i(K-m)$, which
is smaller than the smallest letter of $\shift_{i+2}$, which is
$1+(i+2)(K-m)$.
\end{proof}
\subsection{The general structure of pattern
posets}\label{subsec-gen-structure-posets} The \emph{dual} of a poset
$\clp$ is the poset with the same elements as $\clp$ where $y\le x$ if
and only if $x\le y$ in $\clp$.  The Hasse diagram of the dual of
$\clp$ is obtained by turning the Hasse diagram of $\clp$ upside down.
\begin{remark}\label{remark-symm-posets} It is straightforward to
verify that apart from taking the complement of a pattern (dealt with
in the following Lemma~\ref{lem-dual}), the maps proving the lemmas
about enumeration equivalences in Section~\ref{sec-symmetries} have
corresponding maps on the posets for the patterns in question that
give isomorphic posets, but with the labels of the elements permuted.
As an example, to obtain the poset of the reverse of a pattern $\pi$
we complement its elements, that is, replace the largest label of an
element in $\ppmj$ with 1, the second largest with 2, and so on.
\end{remark}
\begin{lemma}\label{lem-dual} The dual of the pattern poset $\ppm(j)$
is the pattern poset of the complement $C(\pi)$ of $\pi$.
\end{lemma}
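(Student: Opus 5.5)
The plan is to compare the defining inequalities of the two posets directly, using Lemma~\ref{lemma-inequalities} and the stack description. Write $C(\pi)_i=K+1-\pi_i$. First I compute the inverse of $C(\pi)$. If $\tau=\pi^{-1}$, then $C(\pi)$ has value $K+1-\pi_i$ in place $i$, so the letter $j$ of $C(\pi)$ is in place $\tau_{K+1-j}$. Hence $C(\pi)^{-1}=\tau_K\tau_{K-1}\ldots\tau_1=\tau^R$. In words, the inverse of the complement is the reverse of the inverse.

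Next I form the stacks. For $C(\pi)$ with overlap $m$ (so the same $d=K-m$), line $\ell$ of the $j$-stack is
$$\tau_K+\ell d,\ \tau_{K-1}+\ell d,\ \ldots,\ \tau_1+\ell d,$$
which is exactly line $\shift_\ell$ of the stack for $\pi$, read backwards. Both stacks have the same element set, namely the labels $1,\ldots,K+(j-1)d$. The hypothesis $\bp\sim\bs$ for $\pi$ is equivalent to the same hypothesis for $C(\pi)$, so neither family degenerates asymmetrically. Even without that hypothesis, the statement is about the relations generated by the inequalities, so it holds formally.

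The key step is to compare the generating relations. The order of $\ppm(j)$ is the transitive closure of $x<y$ whenever $x$ precedes $y$ on some line. For $C(\pi)$, the generating relations are $y<x$ whenever $x$ precedes $y$ on some line of the $\pi$-stack. Transitive closure commutes with reversing all relations, so the poset of $C(\pi)$ is the dual of $\ppm(j)$, with the identity map on labels giving the isomorphism.

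The only point needing care is that both structures are genuine posets, not merely preorders: antisymmetry must hold for one if and only if it holds for the other. This is automatic, since a cycle in one relation reversed is a cycle in the other. I also expect to need to state clearly that the isomorphism preserves labels. That matters later, since Remark~\ref{remark-symm-posets} contrasts this with the label-permuting isomorphisms for the other symmetries. As a sanity check, the statement is consistent with Theorem~\ref{thm-numoccs-linext} and Lemma~\ref{lemma-symmetries}: reversing a linear extension of a poset gives a linear extension of its dual, so the counts agree.
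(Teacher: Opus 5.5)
Your proof is correct and follows the paper's own argument: the inverse of $C(\pi)$ is the reverse of $\pi^{-1}$, so each line of the stack for $C(\pi)$ is the corresponding line of the $\pi$-stack read backwards, and the generated order relations are therefore opposite. Your added remarks, that antisymmetry transfers and that the isomorphism is the identity on labels, are harmless refinements of that same argument.
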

\begin{proof} The reverse of the inverse of a permutation $\pi$ is the
inverse of the complement of~$\pi$.  Thus, the inverses of $\pi$ and
$C(\pi)$ are the reverses of each other.  Therefore, each line of the
stack for $C(\pi)$ is the reverse of the corresponding line of the
stack for $\pi$, which implies that the order relations of the posets
(for a given number $j$ of occurrences) for $\pi$ and $C(\pi)$ are
each other's opposites. Therefore, these posets are each other's
duals.
\end{proof}
\begin{theorem}\label{thm-cover-2} An element in $\ppm(j)$ can cover
at most two elements and can be covered by at most two elements.
\end{theorem}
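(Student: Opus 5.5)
The plan is to locate all lower covers of a given element $x$ among at most two candidates, and then obtain the statement about upper covers by duality (Lemma~\ref{lem-dual}). The candidates come from the stack. By construction, $y$ is covered by $x$ in $\ppmj$ only if $y$ immediately precedes $x$ on some line of the $j$-stack, since the order is generated by these immediate adjacencies. By Lemma~\ref{lem-successive-lines}, the lines containing $x$ form an interval $\shift_i,\shift_{i+1},\ldots,\shift_l$. For each $k\in[i,l]$ in which $x$ is not the first letter of $\shift_k$, let $z_k$ be the letter immediately preceding $x$ on $\shift_k$. Every lower cover of $x$ is one of these $z_k$. I will show that every lower cover equals $z_i$ or $z_l$.

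The key step is a containment of labels. The labels on $\shift_k$ form the integer interval $[1+kd,K+kd]$, with $d=K-m$. Since $x$ lies on both $\shift_i$ and $\shift_l$, we have $1+ld\le x\le K+id$. Now fix $i<k<l$. Every label $y\in[1+kd,K+kd]$ satisfies $y\ge 1+kd\ge 1+id$, and also $y\le K+kd\le K+ld$. Moreover,
\[
[1+kd,K+kd]\subseteq [1+id,K+id]\cup[1+ld,K+ld],
\]
because the gap condition $1+ld\le K+id+1$ holds, as it follows from $1+ld\le x\le K+id$. So every letter of the middle line $\shift_k$ also appears on $\shift_i$ or on $\shift_l$.

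Now apply this to $z_k$. Suppose $z_k$ lies on $\shift_i$; the case $\shift_l$ is symmetric. By Lemma~\ref{lemma-same-order}, letters common to $\shift_k$ and $\shift_i$ appear in the same order on both lines, so $z_k$ precedes $x$ on $\shift_i$ as well. Hence either $z_k=z_i$, or $z_k$ appears strictly before $z_i$ on $\shift_i$. In the latter case $z_k<z_i<x$ in the poset, so $z_k$ is not covered by $x$. Therefore every lower cover of $x$ lies in $\{z_i,z_l\}$, which gives at most two lower covers; when $i=l$ there is only one candidate.

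For the upper covers, I apply the same argument to the complement $C(\pi)$. By Lemma~\ref{lem-dual}, its pattern poset is the dual of $\ppmj$, so lower covers there are upper covers here. Alternatively, I can rerun the argument with immediate successors in place of immediate predecessors. I expect the only delicate point to be the interval containment, namely checking that $x$ lying on both extreme lines forces $(l-i)d\le K-1$, so that the middle line adds no letters outside the two extreme lines. The rest follows directly from Lemmas~\ref{lemma-same-order} and~\ref{lem-successive-lines}.
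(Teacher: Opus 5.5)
Your proof is correct and follows essentially the paper's argument. The paper likewise observes that the neighbor of $x$ on an intermediate line containing $x$ must also lie on an earlier or later line containing $x$, where it is comparable to $x$'s neighbor on that line and so cannot also cover (or be covered by) $x$; the other direction is then handled by duality via Lemma~\ref{lem-dual}. The differences are cosmetic: you work with lower covers and pin the candidates down to the neighbors on the extreme lines $\shift_i,\shift_l$, whereas the paper argues by contradiction from three distinct upper covers on lines $\shift_a,\shift_b,\shift_c$.
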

\begin{proof} If an element $x$ in $\ppm(j)$ is covered by $y$, then
$y$ must immediately follow $x$ in $\shift_i$ for some $i$.  We then
say that $y$ is a successor of $x$ (in $\shift_i$).  In order for an
element $x$ of $\ppm(j)$ to be covered by three different elements in
$\ppmj$, $x$ must have three different successors, in
$\shift_a,\shift_b$ and $\shift_c$, respectively, for some $a<b<c$.
But since $x$ belongs to each of these shifts, every letter in
$\shift_b$ that is smaller (in size) than $x$ must also belong to
$\shift_a$, and every letter in $\shift_b$ that is greater than $x$
must also belong to $\shift_c$.  In particular, the successor $z$ of
$x$ in $\shift_b$ must also belong to either $\shift_a$ or $\shift_c$.
If $z$ also belongs to $\shift_a$ then $z$ and the successor of $x$ in
$\shift_a$ are comparable and so they can't both cover $x$ in
$\ppmj$. The case where $z$ belongs to $\shift_c$ is analogous, and so
no element of $\ppmj$ can be covered by three different elements.
Since the dual of a pattern poset is a pattern poset
(Lemma~\ref{lem-dual}), no letter can cover more than two letters.
\end{proof} We now characterize the minimal and maximal elements of a
pattern poset $\ppmj$ based on whether 1 and $K$ belong to the prefix
or suffix of $\pi$ (or neither).
 \begin{proposition}\label{prop-min-max-elts} Let $\pi$ be a
$K$-pattern with isomorphic $m$-prefix $\bp$ and $m$-suffix $\bs$.
Then the poset $\ppmj$ satisfies the following:
  \begin{enumerate}
  \item If $\bp$ contains 1 then the only minimal element of $\ppmj$
is the first letter of $\shift_0$.
  \item If $\bs$ contains 1 then the only minimal element of $\ppmj$
is the first letter of $\shift_{j-1}$.
  \item If $\bp$ contains $K$ then the only maximal element of $\ppmj$
is the last letter of $\shift_0$.
  \item If $\bs$ contains $K$ then the only maximal element of $\ppmj$
is the last letter of $\shift_{j-1}$.
  \item If 1 belongs to neither $\bp$ nor $\bs$ then the first element
of $\shift_i$ for each $i$ is minimal in $\ppmj$.
  \item If K belongs to neither $\bp$ nor $\bs$ then the last element
of $\shift_i$ for each $i$ is maximal in $\ppmj$.
  \end{enumerate} It follows that, for $j>1$, $\ppmj$ has a single
minimal element if and only if 1 belongs to either $\bp$ or $\bs$, and
a single maximal element if and only if $K$ belongs to either $\bp$ or
$\bs$.
\end{proposition}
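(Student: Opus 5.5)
The plan is to read minimality and maximality directly off the stack, using that the order of $\ppmj$ is generated by ``$x<y$ whenever $x$ precedes $y$ on some line''. An element is therefore minimal if and only if it is the first letter of every line containing it. Dually, it is maximal if and only if it is the last letter of every line containing it. It also helps to note that an element has no predecessor on any line exactly when it is minimal: it has no lower cover, and every relation $z<x$ would pass through a predecessor of $x$ on some line.

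The first letter of $\shift_i$ is $\tau_1+id$, where $\tau_1$ is the position of 1 in $\pi$. The last letter is $\tau_K+id$. By Lemma~\ref{lemma-same-order}, a letter of $\shift_i$ also lies on $\shift_{i+1}$ exactly when it sits in a suffix position of $\shift_i$, that is, a position $p$ with $\pi_p$ in $\bs$. Symmetrically, it also lies on $\shift_{i-1}$ exactly when it sits in a prefix position. When $m>K/2$ a letter may lie on several lines, but by Lemma~\ref{lem-successive-lines} these always form an interval of lines, and consecutive lines share exactly the letters just described.

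Items 5 and 6 come first. If 1 lies in neither $\bp$ nor $\bs$, then $\tau_1+id$ occupies a position that is neither prefix nor suffix, so it lies only on $\shift_i$ and is first there. Hence it is minimal. Item 6 is the same argument, and also follows from item 5 by Lemma~\ref{lem-dual}.

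For item 1, suppose $1\in\bp$; the task is to show that every element other than the first letter $x_0$ of $\shift_0$ has a predecessor on some line. Take $y\neq x_0$ and let $i$ be the smallest index with $y\in\shift_i$. If $y$ is not first on $\shift_i$, we are done. If $y$ is first on $\shift_i$ with $i\ge1$, then $y$ sits in the position of 1, which is a prefix position. So $y$ lies on $\shift_{i-1}$, contradicting the minimality of $i$. Hence $i=0$ and $y=x_0$. Since the poset is finite, every element lies above some minimal element, so $x_0$ is the unique minimal element.

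Item 2 is symmetric: one takes the largest line index containing $y$, uses that $1\in\bs$ corresponds to a suffix position, and concludes that only the first letter of $\shift_{j-1}$ is minimal. Items 3 and 4 then follow from items 1 and 2 by duality (Lemma~\ref{lem-dual}), since complementation exchanges 1 and $K$ while keeping which positions belong to the prefix and suffix.

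For the final claim, the ``if'' direction is items 1 through 4. For ``only if'', if 1 lies in neither $\bp$ nor $\bs$, item 5 supplies the elements $\tau_1$ and $\tau_1+d$. These are distinct minimal elements when $j>1$, since they are distinct labels; the same argument handles maximal elements.

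I expect the only delicate step to be the case $m>K/2$, where a letter may lie on more than two lines. The argument above avoids it by taking the extremal line index containing $y$ and invoking Lemma~\ref{lemma-same-order} only between adjacent lines.
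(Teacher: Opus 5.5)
Your proof is correct, and its skeleton matches the paper's. An element is minimal exactly when it has no predecessor on any line, and when $1\in\bp$ the first letter of each $\shift_i$ with $i\ge1$ also lies on $\shift_{i-1}$, where it is not first.

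The one genuine difference is how you get minimality of the first letter $x_0$ of $\shift_0$. The paper verifies directly that $x_0\notin\shift_1$. That forces a case split on $m\le K/2$ versus $m>K/2$ and, in the second case, a separate argument that $1$ cannot lie in $\bp\cap\bs$ when $\bp\sim\bs$. You instead show that every other element has a predecessor, and let the existence of a minimal element in a finite poset finish the job. Your route is shorter and needs no case split. It even yields $1\notin\bp\cap\bs$ as a by-product, since otherwise items 1 and 2 together would leave no minimal element for $j\ge2$. The price is that it uses acyclicity of the stack relations, which the paper also presupposes by calling $\ppmj$ a poset. Getting items 3 and 4 from Lemma~\ref{lem-dual} instead of from ``analogous arguments'' is also fine.

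One slip to fix: the parenthetical ``a position $p$ with $\pi_p$ in $\bs$'' confuses $\pi$ with $\tau=\pi^{-1}$. Position $p$ of $\shift_i$ holds $\tau_p+id$, which lies on $\shift_{i+1}$ iff $\tau_p>K-m$, i.e.\ iff the \emph{value} $p$ is a letter of $\bs$. For example, with $\pi=143625$ and $m=2$, position $2$ of $\shift_0$ holds the shared letter $5$, although $\pi_2=4\notin\bs$. Every later use you make of ``prefix/suffix position'' agrees with the corrected condition (e.g.\ position $1$ is a prefix position iff $1\in\bp$), so nothing downstream is affected.
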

\begin{proof} The only elements that can be minimal in the poset $\ppmj$ are
  those appearing as the first letter of $\shift_i$ for some $i$.  Such an
  element $\epsilon$ is minimal if it does not appear on any other line of the
  stack, since if it doesn't then it can't be greater than any other element
  appearing in the stack.  Conversely, if $\epsilon$ does appear on another line
  of the stack then on that line it cannot appear in the first place (since
  those letters differ by $d=K-m$ from one line to the next), and consequently
  $\epsilon$ is greater in $\ppmj$ than the letters preceding it on that line.
  Similarly, the only elements that can be maximal in the poset $\ppmj$ are
  those appearing as the last letter of $\shift_i$ for some $i$ and such an
  element is maximal if and only if it appears on no other line of the stack.
  Suppose 1 belongs to $\bp$.  Then the maximum possible value of the first
  element of $\shift_0$ is $m$, so the maximum possible value of the first
  letter of $\shift_1$ is $m+(K-m)=K$, which is also found in $\shift_0$, so the
  first letter of $\shift_1$ cannot be minimal in $\ppmj$.  Analogous reasoning
  shows that the first letter of $\shift_i$ for any $i>1$ cannot be minimal in
  $\ppmj$.  The first letter $\ell$ of $\shift_0$ is minimal, however, when 1
  belongs to $\bp$, as we now show: If $m\le K/2$ then $\ell$ is at most $K/2$
  and the least element of $\shift_1$ is $1+K-m>K/2$, so $\ell$ does not appear
  in $\shift_1$ (or on any later line of the stack).  If $m>K/2$ then 1 cannot
  appear in the intersection of $\bp$ and $\bs$ since if it did then it of
  course would be the smallest element of each.  But then the smallest element
  of the prefix would have fewer greater letters after it than the smallest
  letter of the suffix, so they couldn't be isomorphic.  So if 1 belongs to
  $\bp$ then the maximum possible value of $\ell$ is $K-m$ and the least element
  in $\shift_1$ is $1+K-m$, so again $\ell$ does not appear in $\shift_1$.  The
  argument in the case when 1 belongs to the suffix is similar, and analogous
  arguments prove the claims about the position of $K$.  Finally, if~1 belongs
  to neither $\bp$ nor $\bs$ then an analysis similar to the one above shows
  that the first letter of $\shift_i$ for any $i$ cannot appear on any other
  line of the stack, and an analogous argument applies when $K$ belongs neither
  to $\bp$ nor $\bs$.  The last part of the theorem follows from its preceding
  parts.
\end{proof} Before deriving a corollary of
Proposition~\ref{prop-min-max-elts} we make the following definition.
Note first that every pattern-packing poset $\calP$ is connected, by
virtue of the occurrences of the pattern overlapping.

\begin{definition}\label{def-tree-chain} A pattern poset $\clp$ whose
Hasse diagram, viewed as an undirected graph, is a tree is referred to
as a \emph{tree poset} or just a \emph{tree}. If, in addition, $\clp$
has a unique minimal and a unique maximal element then $\clp$ is a
\emph{chain}.
\end{definition} A poset $\clp$ being a chain means that there is a
total order on the elements of $\clp$.

If $m\ge K/2$ then each of 1 and $K$ belongs to either prefix or
suffix, and so $\ppmj$ has a unique minimal and a unique maximal
element by Proposition~\ref{prop-min-max-elts}:

\begin{corollary}\label{coro-unique-min-max} For overlap $m\ge K/2$,
if $\ppmj$ is a tree then it is a chain.
\end{corollary}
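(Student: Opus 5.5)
The plan is to combine Proposition~\ref{prop-min-max-elts} with the definition of a chain (Definition~\ref{def-tree-chain}). By that definition, a tree poset is a chain exactly when it has a unique minimal and a unique maximal element. So it suffices to show that for $m\ge K/2$ the poset $\ppmj$ always has a unique minimal and a unique maximal element. This holds whether or not it is a tree; the tree hypothesis is needed only to invoke the definition.

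The first step is to show that for $m\ge K/2$, each of the letters $1$ and $K$ lies in $\bp$ or in $\bs$. The prefix occupies positions $1,\ldots,m$ and the suffix occupies positions $K-m+1,\ldots,K$. Since $m\ge K/2$, we have $K-m+1\le m+1$, so these two intervals together cover all positions $1,\ldots,K$. When $m=K/2$ they are complementary; when $m>K/2$ they overlap. Thus every letter of $\pi$, in particular $1$ and $K$, belongs to at least one of $\bp$ and $\bs$.

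Next I apply parts 1--4 of Proposition~\ref{prop-min-max-elts}. If $1\in\bp$, the unique minimal element is the first letter of $\shift_0$; if $1\in\bs$, it is the first letter of $\shift_{j-1}$. Similarly, $K\in\bp$ or $K\in\bs$ gives a unique maximal element. A potential worry is the case where $1$ lies in both $\bp$ and $\bs$, which would make the two descriptions conflict. The proof of Proposition~\ref{prop-min-max-elts} rules this out: it shows that $1$ (and symmetrically $K$) cannot lie in the intersection of $\bp$ and $\bs$. In any event, each part yields uniqueness of the minimal (maximal) element, and that uniqueness is all we need.

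Finally, the case $j=1$ is immediate, since $\ppm(1)$ is the single chain $\shift_0$. For $j>1$, $\ppmj$ is a tree with a unique minimum and a unique maximum, hence a chain by Definition~\ref{def-tree-chain}. No step is a real obstacle. The only point requiring care is the covering argument in the first step, together with the convention that the proposition assumes $\bp\sim\bs$, which is in force throughout.
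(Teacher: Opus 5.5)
Your proposal is correct and is essentially the paper's own argument: for $m\ge K/2$ the $m$-prefix and $m$-suffix together cover every position of $\pi$, so $1$ and $K$ each lie in $\bp$ or $\bs$, and Proposition~\ref{prop-min-max-elts} then gives a unique minimal and a unique maximal element, which by Definition~\ref{def-tree-chain} makes a tree poset a chain. Your additional check that $1$ (or $K$) cannot lie in both $\bp$ and $\bs$ is harmless but not needed, since only uniqueness matters.
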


In the following definition we restrict the pattern $\pi$ to not have
1 in its suffix, since that simplifies the definition and the proof of
the following theorem.  This causes no loss of generality, since if 1
is in the suffix we obtain an equivalent pattern by reversing $\pi$.
\begin{definition} Let $\pi$ be a $K$-pattern that doesn't have 1 in
its $m$-suffix.  Let $M_p$ be the greatest element of the $m$-prefix
of $\pi$ and let $M_s$ be the greatest element of the $m$-suffix of
$\pi$.

  The \emph{bottom element} of $\ppmj$ is the first letter of the
first line of the stack for $\ppmj$.

  The \emph{top element} of $\ppmj$ is the last letter of the first
line of the stack for $\ppmj$ if $M_p>M_s$, but otherwise it is the
last letter of the last line of the stack.

  The \emph{core} of $\ppmj$ is the interval between the top and
bottom elements of $\ppmj$.

  A \emph{branch} of $\ppmj$ is a nonempty connected subset of
$\ppmj$, disjoint from the core and maximal with respect to inclusion.
\defends
\end{definition}

See Figure~\ref{fig-251368947} for an example of the core and branches of a
poset.

For a $K$-pattern $\pi$ and overlap $m$ recall that the \emph{interfix} of $\pi$
is $\pi_{m+1}\pi_{m+2}\ldots \pi_{K-m}$, that is, the segment of letters between
the $m$-prefix and $m$-suffix of $\pi$.  For $m>K/2$, the interfix is understood
to be empty. The length of a chain is its number of elements.

As in the preceding definition, in the following theorem we restrict
the pattern $\pi$ to not have 1 in its suffix but also, when $m\le
K/2$, to have the least element of its $m$-prefix be smaller than the
least element of its $m$-suffix, and to have increasing prefix and
suffix, since this simplifies the proof, without losing generality, by
the lemmas of Section~\ref{sec-symmetries}.

\begin{theorem}\label{thm-branch-is-chain} Let $\pi$ be a $K$-pattern
with $m$-prefix $\bp$ and $m$-suffix $\bs$ that does not have 1 in
$\bs$, where the least element of $\bp$ is smaller than the least
element of $\bs$, and, when $m\le K/2$, assume $\bp$ and $\bs$ are
increasing.
  
  Let $M_p$ be the greatest element of $\bp$ and let $M_s$ be the
greatest element of $\bs$.\\[0.5ex] Let $1,2,3,\ldots,a$ be the
maximal sequence of successive values, starting with~1, contained (in
any order) in the interfix of $\pi$ and set $a=0$ if the sequence is
empty.\\[0.5ex] Let $K-b+1,K-b+2,\ldots,K$ be the maximal sequence of
successive values, ending with $K$, contained (in any order) in the
interfix of $\pi$ and set $b=0$ if the sequence is empty.

  If either sequence above is empty the corresponding following
statements are omitted.

  Then:
\begin{enumerate}
\item For each $i>0$, the first $a$ elements of $\shift_i$ form a
branch in $\ppmj$.

\item If $M_p>M_s$ then, for each $i>0$,the last $b$ elements of
$\shift_i$ form a branch in $\ppmj$.

\item If $M_p<M_s$ then, for each $i<j-1$, the last $b$ elements of
$\shift_i$ form a branch in $\ppmj$.

\item Each branch of $\ppmj$ is a chain, connected by a single edge to
the core of $\ppmj$.

\item All other elements of the stack belong to the core.

\item Every chain of maximal length in $\ppmj$ contains the top and
bottom elements of $\ppmj$.
\end{enumerate}
\end{theorem}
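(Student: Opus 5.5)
We work directly with the $j$-stack, using the fact that $\shift_i$ is $\tau=\pi^{-1}$ shifted by $id$. The first $a$ letters of $\tau$ are the positions of the values $1,\ldots,a$ in $\pi$. Since these values lie in the interfix, these positions are neither in the prefix nor in the suffix. Hence the first $a$ letters of $\shift_i$ lie on no other line: a letter shared with $\shift_{i\pm1}$ would occupy a prefix or suffix position. So these $a$ letters form a chain in $\ppmj$ whose only connection to the rest of the poset is the cover relation from the $a$th letter to the $(a+1)$st letter of $\shift_i$. The $(a+1)$st letter is the position of the value $a+1$, which by maximality of $a$ lies in $\bp$ or $\bs$ (the value $a+1$ is not in the interfix, or $a=0$). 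For $i>0$ we show that this $(a+1)$st letter is not the bottom element. The only thing below it on line $i$ is the branch, but it also appears on a neighbouring line, where there are elements below it that are connected to $\shift_0$. Thus the first $a$ letters are not in the core. Item~1 follows, provided we check that the letter of value $a+1$ is shared and is not the first letter of $\shift_0$. For $i=0$ the first $a$ letters do lie in the core, since the bottom element is the first letter of $\shift_0$. Items~2 and~3 are the dual statement (Lemma~\ref{lem-dual}) applied to the last $b$ letters. Whether the top element sits on $\shift_0$ or on $\shift_{j-1}$ depends on $M_p$ versus $M_s$, and this determines which line is excluded.

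The substantive part is items~4--6: every other element of the stack lies in the core, that is, between bottom and top. We first show that every element $x$ not in a branch is $\ge$ the bottom element, using induction on the line index. On $\shift_0$, every letter is above the first letter. On $\shift_i$ with $i>0$, the first element not in the initial branch is the position of the value $a+1$. This letter lies in $\bp$ or $\bs$, hence it is shared with $\shift_{i-1}$ or $\shift_{i+1}$. We want it to be above an element of $\shift_{i-1}$ already known to be above the bottom. This forces us to use the hypotheses $\bp_1<\bs_1$, increasing prefix and suffix, and $1\notin\bs$. For $m\le K/2$ these say that the smallest shared letter of $\shift_i$ with $\shift_{i-1}$ is the first prefix letter of $\shift_i$, and that it comes early enough on line $i$. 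All later letters on line $i$ then follow it and are above the bottom. The upward statement (every non-branch element is $\le$ top) is handled symmetrically, with the case split $M_p>M_s$ versus $M_p<M_s$: we run the induction from $\shift_0$ downward through the lines, or from $\shift_{j-1}$, accordingly. For $m>K/2$, lines share more letters (Lemma~\ref{lemma-only-two-lines} fails), but the shared block of consecutive lines is still the top $m$ of one line and the bottom $m$ of the next, ordered consistently (Lemma~\ref{lemma-same-order}). The same induction should go through, using that $1$ is not in $\bp\cap\bs$ (shown in Proposition~\ref{prop-min-max-elts}).

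Given this, the complement of the core is exactly the union of the described initial and terminal segments, and each such segment is a chain attached by one edge, which gives items~4 and~5. For item~6, a maximal-length chain ends at a maximal element and starts at a minimal element. Every element is either in the core or in a pendant chain hanging below the core (the initial segments) or above it (the terminal segments). Suppose a longest chain avoided the bottom element. Then it would start in some initial branch of length $a$ and enter the core through that branch's attachment point $y$, which is strictly above the bottom. We must show that the core contains a chain from the bottom to $y$ of length at least $a+1$ elements. I expect this comparison to be the main obstacle. The approach I would try first is to exhibit on $\shift_{i-1}$ (or by induction, on $\shift_0$) a chain from the bottom reaching $y$ that passes through at least the $a$ letters preceding the analogous position, namely the first $a$ letters of $\shift_0$ together with the route through the shared prefix/suffix letters. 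The top element is handled dually.
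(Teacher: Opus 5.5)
Your argument for items 1--5 follows the paper's route. Letters in interfix places of $\shift_i$ occur on no other line, so they form pendant chains. Every other element lies above the bottom, by induction along the lines through the shared letter $1+id$: this is the $(a+1)$st letter of $\shift_i$ and sits at place $\bs_1$ of $\shift_{i-1}$. The top is handled symmetrically, with the $M_p$/$M_s$ split. Getting items 2--3 by duality instead of repeating the argument is fine, provided you check one thing. The complement, followed by reversal when $M_p<M_s$ and by re-sorting prefix and suffix, must again satisfy the hypotheses and exchange top and bottom; it does. One small slip: in your first paragraph, ``Thus the first $a$ letters are not in the core'' does not follow from the preceding sentences about the $(a+1)$st letter. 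The correct reason is that these letters occur only on $\shift_i$. Hence everything below them is among them, and for $i>0$ that set does not contain the bottom.

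The genuine gap is item 6, which you leave open, and your stated target there is off by one. To show that \emph{every} longest chain contains the bottom, you need a strictly longer replacement, not just a comparable one. The missing observation is this: the attachment vertex $y=1+id$ lies at place $\bs_1$ of $\shift_{i-1}$, and $\bs_1>\bp_1=a+1$, so at least $a+1$ letters precede $y$ on $\shift_{i-1}$. Replace the $a$ branch elements by these letters, keeping $y$ and everything above it. The result is a chain longer by at least one, so the original chain was not of maximal length. The replacement need not start at the bottom. A chain ``passing through at least the $a$ letters'', or one with $a+1$ elements counting $y$, only ties the branch. That would show that \emph{some} longest chain contains the bottom, which is weaker than the statement. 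The top is handled the same way. When $M_p>M_s$, the attachment vertex sits at place $\bs_m<\bp_m=K-b$ on $\shift_{i-1}$. When $M_p<M_s$, it sits at place $\bp_m<\bs_m=K-b$ on $\shift_{i+1}$. In either case at least $b+1$ letters follow it on that line.
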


\begin{proof} The case $j=1$ is trivial, since $\ppm(1)$ is a single
chain. Suppose first that $m>K/2$.  Then neither $1$ nor $K$ can
belong to the intersection of $\bp$ and $\bs$, for if 1 belonged to
both then it would be the least element of both, but in different
positions, so they couldn't be order isomorphic.  The case for $K$ is
analogous. Thus $1$ belongs only to $\bp$, since it is assumed to not
belong to $\bs$, and $K$ belongs either to $\bp$ or $\bs$.

By Proposition~\ref{prop-min-max-elts}, the poset has a unique minimal
and a unique maximal element. This minimum must be the first letter of
the first line, because the first letter of every later line also
belongs to the preceding line.  Namely, since 1 is in $\bp$, and not
in the intersection of $\bp$ and $\bs$, its place in $\pi$ is at most
$K-m<m$, so the first element $f_0$ of $\shift_0$ is therefore
strictly smaller than $m$, and the first element $f_i$ of $\shift_i$,
for any $i>0$, is strictly smaller than $m+i(K-m)=K+(i-1)(K-m)$, which
is the greatest letter in $\shift_{i-1}$, so $f_i$ belongs to
$\shift_{i-1}$.

Analogous arguments show that if $K$ belongs to $\bp$, each later
line's last letter belongs to the preceding line and if $K$ belongs to
$\bs$, each line's last letter, except for the last line, belongs to
the following line. The minimum and maximum are therefore the stated
bottom and top. Since each is comparable to every element of the
stack, every maximal chain contains both, the core is the entire poset
and there are no branches, as $a=b=0$.

Now suppose $m\le K/2$, that $\bp$ and $\bs$ are increasing, that the
minimum of $\bp$ is smaller than the minimum of $\bs$ (and thus that 1
is not in $\bs$).  If $a>0$ then, since the elements $1,2,\ldots,a$
belong to the interfix, and thus are in places greater than $m$, the
first $a$ elements of $\shift_i$ for $i>0$ are greater than
$m+i(K-m)=K+(i-1)(K-m)$, the greatest element of $\shift_{i-1}$, and
therefore do not belong to $\shift_k$ for any $k<i$. Their labels are
at most $(K-m)+i(K-m)=(i+1)(K-m)$, since they precede the $m$-suffix,
so they do not belong to any later line either. Thus each occurs on
just one line, so they form a chain whose only attachment to the rest
of the poset is via the edge to the element in place $a+1$.

When $b>0$, a similar argument shows that the last $b$ elements of
$\shift_i$ for $i<j-1$ do not belong to $\shift_k$ for any $k>i$, nor
do the last $b$ elements of $\shift_i$ for $i>0$ belong to $\shift_k$
for any $k<i$, so regardless of whether the top element is in the
first or last line of the stack, the last $b$ elements on each line
specified in the statement cannot, in $\ppmj$, be below the top
element.

We now show that the bottom element, the first element of $\shift_0$,
is, in $\ppmj$, below the element in place $a+1$, and thus below all
elements in places after the $a$-th, in each $\shift_i$ for $i>0$,
which excludes only the $a$ branch elements at the beginning of each
of these lines.  The letter in place $a+1$ in $\shift_0$ is 1, because
1 is in the place determined by the first letter of $\bp$ (which is
assumed increasing), and this letter must be $a+1$, since that's the
smallest letter not in the interfix, and that letter cannot be in
$\bs$, whose least letter is greater than that of $\bp$.  Thus, the
letter in place $a+1$ in $\shift_i$ is $1+i(K-m)$, which occurs in
place $\bs_1$ of $\shift_{i-1}$.  Since $\bs_1>\bp_1=a+1$, induction
from the first line proves the claim.

Again, similar arguments show that the top element is greater than or
equal to all elements in places up to $K-b$ in each $\shift_i$.

Consequently, the elements other than those described as branches in
the statement are exactly the core.  Since each of the sets of
elements described as branches has been shown to form a chain,
connected by a single edge to the core, it does indeed constitute a
branch.

It remains to consider longest chains, which are necessarily
maximal. A maximal chain $\clc$ not starting at the bottom element
starts in a branch, consisting of the first $a$ elements of some
$\shift_i$ for $i>0$, and must contain all of those $a$ elements,
since the last one of these is the only one that is connected to the
core of the poset, at a vertex $v$, which is the letter in place $a+1$
in $\shift_i$.  But that letter, as shown above, also belongs to
$\shift_{i-1}$ and there lies to the right of place $a+1$.  Thus,
replacing the initial $a$ elements of the chain $\clc$ by all the
letters in $\shift_{i-1}$ up to and including the one in place $a+1$,
while retaining $v$ and the rest of $\clc$, we have lengthened the
chain $\clc$ by at least one element.  So a chain not starting at the
bottom element cannot have maximal length.  Analogous arguments show
the top element must belong to any chain of maximal length.
\end{proof}

The following are consequences of the proof of
Theorem~\ref{thm-branch-is-chain}.
\begin{porism}\label{porism-branches} Let $a$ and $b$ be as defined in
the statement of Theorem~\ref{thm-branch-is-chain}.
  \begin{itemize}
  \item Branches determined by the sequence $1,2,\ldots,a$ go
downwards from the core, that is, their elements are below the vertex
connecting them to the core, and they all have the same length $a$.
  \item Branches determined by the sequence $K-b+1,K-b+2,\ldots,K$ go
upwards from the core, that is, their elements are above the vertex
connecting them to the core, and they all have the same length $b$.
    \item If the prefix and suffix of $\pi$ between them contain both
$1$ and $K$, in particular if $m\geq K/2$, then $\ppmj$ has no
branches.
  \end{itemize}
\end{porism}

\begin{figure}[htbp]
\centering
\begin{tikzpicture}[scale=0.060, line width=1pt]
  \def\rvertex#1,#2,#3,{\draw [red, fill=white] (#1,#2) circle [radius=5] node [black] {$#3$};}
  \def\bvertex#1,#2,#3,{\draw [blue, fill=white] (#1,#2) circle [radius=5] node [black] {$#3$};}
\begin{scope}[shift={(0,80)}]
\draw node {%
$
\begin{array}{ccccccccccccc}
\shift_0=\;\;\;\;\blu3&\blu1&\blu4&\blu8&\blu2&\blu5&\blu9&\rd6&\rd7\\  \shift_1=\;\;\rd{10}&\blu8&\blu{11}&\blu{15}&\blu9&\blu{12}&\blu{16}&\rd{13}&\rd{14}\\   \shift_2=\;\;\rd{17}&\blu{15}&\blu{18}&\blu{22}&\blu{16}&\blu{19}&\blu{23}&\blu{20}&\blu{21}
\end{array}
$
};
\end{scope}
\begin{scope}[shift={(20,0)}, scale=0.85]
\draw [blue] (35,30)--(41,10); 
  \draw [red] (40,10)--(45,-10); 
  \draw [red] (65,80)--(85,0); 
  \draw [blue] (75,40)--(80,20); 
  \draw [red] (105,90)--(115,50); 
  \draw [blue] (115,50)--(120,30); 
  \draw [blue] (145,100)--(155,60); 

  \draw [blue] (0,0)--(5,-20); 
  \bvertex5,-20,3,
  \rvertex45,-10,10,
  \rvertex85,0,17,
  
  \draw [blue] (0,0)--(120,30); 
   
  \bvertex0,0,1,
  \bvertex20,5,4,
  \bvertex40,10,8,
  \bvertex60,15,11,
  \bvertex80,20,15,
  \bvertex100,25,18,
  \bvertex120,30,22,
   
  \draw [blue] (35,30)--(155,60);
  \bvertex35,30,2,
  \bvertex55,35,5,
  \bvertex75,40,9,
  \bvertex95,45,12,
  \bvertex115,50,16,
  \bvertex135,55,19,
  \bvertex155,60,23,
    
  \rvertex70,60,6,
  \rvertex65,80,7,
    
  \rvertex110,70,13,
  \rvertex105,90,14,
    
  \bvertex150,80,20,
  \bvertex145,100,21,
   
\end{scope}
\end{tikzpicture}
\caption{\label{fig-251368947} The poset for three occurrences of the pattern 25\rdd136\rdd{89}47 (whose inverse is 314825967) with overlap 2 and its defining stack.  The core is blue, branches red, and \rdd1 and \rdd{89} are the respective maximal sequences in the interfix of $\pi$ that determine branch lengths.}
\end{figure}

An \emph{upper bound} of two elements $a,b$ in a poset $\clp$ is an
element $x$ such that $a,b\le x$.  A \emph{least upper bound} of $a$
and $b$ is an upper bound $x$ such that $x\le y$ for any upper bound
$y$ of $a$ and $b$.  A \emph{greatest lower bound} is defined
analogously, with '$\le$' replaced by '$\ge$'.  An order theoretic
\emph{lattice} is a poset where each pair of elements has a least
upper bound and a greatest lower bound.

\begin{theorem}\label{thm-core-lattice} The core of a pattern poset
$\ppmj$ is a lattice.  In particular, if the $m$-suffix and $m$-prefix
of $\pi$ between them contain both 1 and $K$ then $\ppmj$ is a
lattice.  Also, the poset obtained from $\ppmj$ by adding an element
greater than all elements of $\ppmj$ and an element smaller than all
elements of $\ppmj$ is a lattice.
\end{theorem}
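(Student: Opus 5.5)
The plan is to reduce the statement to the existence of joins, and then build the joins by induction on $j$ using the stack.

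\emph{Step 1: reduction to joins.} By Theorem~\ref{thm-branch-is-chain}, the core is the interval between the bottom and the top element, so it is a finite poset with a $\hat0$ and a $\hat1$. In a finite poset with a $\hat1$ in which every pair has a join, every pair also has a meet: the meet of $x$ and $y$ is the join of their (nonempty, because $\hat0$ is present) set of common lower bounds. It therefore suffices to show that any two core elements $x,y$ have a least upper bound. The second sentence of the theorem then follows from Porism~\ref{porism-branches}, since in that case there are no branches and the core is all of $\ppmj$. The third sentence follows by the same reduction: adjoining a new $\hat0$ and $\hat1$ makes the whole poset bounded, so it is enough to check joins there too. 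The only new cases are joins involving branch elements. A branch is a chain attached by one edge (Theorem~\ref{thm-branch-is-chain}), so a join involving a branch element reduces to a join of core elements or to the new $\hat1$.

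\emph{Step 2: order relation across lines.} I will use the order structure coming from the stack. Each line $\shift_i$ is a chain. Consecutive lines share exactly the top $m$ letters of $\shift_i$ and the bottom $m$ letters of $\shift_{i+1}$, in the same order (Lemma~\ref{lemma-same-order}), and the lines containing a given letter form an interval (Lemma~\ref{lem-successive-lines}). From this I would prove the following description of the order: $x<z$ if and only if there is a sequence of lines between the lines of $x$ and $z$, moving monotonically in line index, together with shared letters, that ascends along each line. The key consequence, for a fixed element $x$ and a fixed line $\shift_k$, is that $\{z\in\shift_k : z\ge x\}$ is an up-set of the chain $\shift_k$, hence a final segment of that line.

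\emph{Step 3: constructing the join.} Let $x,y$ be incomparable. By Lemma~\ref{lem-successive-lines}, all lines containing $x$ precede all lines containing $y$ (or vice versa). For each line $k$, the common upper bounds of $x$ and $y$ on $\shift_k$ form the intersection of two final segments. This intersection is again a final segment, so if it is nonempty it has a least element $c_k$. I would take the candidate join $c$ to be the minimum of the $c_k$ over the lines lying between or adjacent to the line ranges of $x$ and $y$. Such a $c_k$ exists at least on lines reaching the top element, since the top is an upper bound of both.

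\emph{Main obstacle.} The hardest step is showing that this minimum is well defined and actually lies below every common upper bound $u$, including those $u$ on far-away lines. Concretely, I must show that the various $c_k$ are pairwise comparable, and that any path witnessing $x<u$ and $y<u$ passes through some line $k$ at an element $\ge c_k$. I would try first an induction on $j$. $\ppm(j)$ is obtained from $\ppm(j-1)$ by gluing the chain $\shift_{j-1}$ along its bottom $m$ letters, which are the top $m$ letters of $\shift_{j-2}$ and appear in the same order. Gluing a chain onto a lattice along a chain that is order-embedded in both should preserve the existence of joins. If this bookkeeping fails when $m>K/2$, where a letter lies on many lines, my fallback is to prove that the stack gives an upward planar drawing of the Hasse diagram of the core. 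The bounded planar-poset theorem (Kelly--Rival) would then yield the lattice property.
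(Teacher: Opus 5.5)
Your reductions in Step~1 are essentially fine, and Step~2 proves more than you need: on a chain, the elements above a fixed $x$ trivially form a final segment. The gap is that the one nontrivial assertion of the theorem is never proved, namely that two elements with a common upper bound have a least one. Step~3 only defines candidates $c_k$. The properties that would make their minimum the join (pairwise comparability, and lying below every common upper bound) are exactly what you leave open as the ``main obstacle''.

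The induction you sketch is also set up with the wrong hypothesis. In general $\ppm(j-1)$ is not a lattice: when $1$ lies in neither the prefix nor the suffix it has several minimal elements. So ``gluing a chain onto a lattice'' does not apply as stated. The Kelly--Rival fallback presupposes an upward planar drawing of the core, which you have not constructed.

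The induction does go through if you carry the statement $(H)$: any two elements with a common upper bound have a join.
\begin{itemize}
\item For every $m$, not only $m\le K/2$, the line $\shift_{j-1}$ meets $\ppm(j-1)$ in exactly the $m$ largest labels of $\shift_{j-2}$. Call this set $S$; it is ordered identically on both lines (Lemma~\ref{lemma-same-order}), and $\ppm(j-1)$ is an induced subposet of $\ppm(j)$.
\item For $c\in\shift_{j-1}\setminus S$, let $c^-$ and $c^+$ be the nearest elements of $S$ before and after $c$ on $\shift_{j-1}$. Any comparison between $c$ and an old element $l$ must leave the new line through $S$. Hence $c<l$ iff $c^+\le l$, and $l<c$ iff $l\le c^-$.
\item If a new $c$ is a common upper bound of old $l,l'$, then $c^-$ is an old common upper bound of them. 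So joins of old pairs are unchanged, and no old pair acquires a common upper bound it lacked.
\item Suppose $c$ and an old $l$ are incomparable, and $c'\in\shift_{j-1}\setminus S$ is a common upper bound of them. Then $c'^-\ge c^+$, since otherwise $l\le c'^-=c^-<c$.
\item Therefore the common upper bounds of $\{c,l\}$ are exactly those of $\{c^+,l\}$, and the join is $c^+\vee l$. There are none if $c^+$ does not exist.
\end{itemize}
This gives $(H)$ for all $j$, and Lemma~\ref{lem-dual} gives the dual statement for meets. All three assertions of the theorem then follow at once, including the one with $\widehat0,\widehat1$ adjoined, with no need for your branch analysis.

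This is a constructive route, different from the paper's. The paper proves $(H)$ by contradiction. It takes two distinct minimal common upper bounds $x,y$ of $a,b$, with saturated chains from $a$ and from $b$ to each. These chains occupy two disjoint intervals of lines. Lemma~\ref{lem-successive-lines} then forces both $x$ and $y$ onto the last line of the first interval, making them comparable.
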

\begin{proof} Since the core has a bottom and a top, every pair of its
elements has an upper and a lower bound. We show, more generally, that
whenever two elements of $\ppmj$ have common upper bounds, they have a
least upper bound.

  Suppose $a,b$ have common upper bounds but no least upper bound.
They must therefore have two distinct common upper bounds $x$ and $y$
that are minimal with respect to being common upper bounds, and they
are necessarily incomparable. Choose four saturated chains, from $a$
to $x$, $a$ to $y$, $b$ to $x$ and $b$ to $y$.

  Every element other than $x,y$ on either chain from $a$ is
incomparable to every element other than $x,y$ on either chain from
$b$. Namely, if $u$ is on a chain from $a$ and $v$ on a chain from
$b$, with $u\le v$, then $v$ is a common upper bound for $a,b$
strictly below $x$ or $y$, contradicting minimality. The case $v\le u$
is analogous.

Consecutive elements of each chain appear together on some line of the
stack, since their comparison is a cover relation, as the chains are
saturated.  By Lemma~\ref{lem-successive-lines}, the lines containing
elements of either chain from $a$, with $x,y$ omitted, form an
interval of lines, for each chain meets consecutive lines, and the two
chains meet at $a$. The same holds for the chains from $b$. These two
intervals are disjoint, by the incomparability established
above. Assume, without loss of generality, that the interval for the
chains from $a$ precedes that for the chains from $b$.

Let $\shift_r$ be the last line of the former interval.  Each of $x,y$
occurs on a line in each interval, since it occurs together with its
predecessor on each of the two chains ending there. By
Lemma~\ref{lem-successive-lines}, both $x$ and $y$ therefore occur on
$\shift_r$. This makes them comparable, a contradiction. Hence $a,b$
have a least upper bound. Since the dual of a pattern poset is a
pattern poset this also applies to greatest lower bounds.

If the prefix and suffix between them contain both $1$ and $K$ then
the core is the whole poset, so $\ppmj$ is a lattice.

After adjoining $\widehat1$ and $\widehat0$, existing least upper and
greatest lower bounds remain unchanged. A pair with no common upper
bound now has least upper bound $\widehat1$, and a pair with no common
lower bound has greatest lower bound $\widehat0$, since these are
common bounds, and the only possible such. The resulting poset is
therefore a lattice.
\end{proof}

Note that Theorem~\ref{thm-core-lattice} in particular implies that in
a pattern poset two distinct elements cannot both cover (or be covered
by) another two distinct elements, since that would violate unique
least upper (and greatest lower) bounds.

\subsection{Chains and tree posets}

We now give necessary and sufficient conditions for a pattern packing
poset to be a tree, or a chain. Figures~\ref{fig-162534} and
\ref{fig-35124867} supply examples.  (Of course, the poset for one
occurrence of a pattern is always a chain.)

\begin{figure}[h]
  \vskip7ex
  \hspace*{2em}
\begin{tikzpicture} \draw node { $
\begin{array}{cccccccccc} \shift_0&=&1&3&5&6&4&2\\
\shift_1&=&5&7&9&10&8&6
\end{array} $ };
\end{tikzpicture}\\%
\vskip-7ex
\hspace*{11em}
\begin{tikzpicture}[scale=0.45, line width=1pt]
\def\vertex#1,#2,#3,{\draw [blue, fill=white] (#1,#2) circle
[radius=0.7] node [black] {$#3$};}
  \begin{scope}[shift={(0,0)}] \draw [blue] (0,0)--(18,9);
\vertex0,0,1, \vertex2,1,3, \vertex4,2,5, \vertex6,3,7, \vertex8,4,9,
\vertex10,5,10, \vertex12,6,8, \vertex14,7,6, \vertex16,8,4,
\vertex18,9,2,
\end{scope}
\end{tikzpicture}
\caption{The Hasse diagram of the pattern poset $\ppm(2)$, for
$\pi=162534$ (whose inverse is 135642) and $m=2$, with its defining
stack.}
\label{fig-162534}
\end{figure}

\begin{figure}[H] \vskip7ex \hspace*{2em}
\begin{tikzpicture} \draw node { $
\begin{array}{cccccccccc} \shift_0&=&3&4&1&5&2&7&8&6\\
\shift_1&=&9&10&7&11&8&13&14&12\\ \shift_2&=&15&16&13&17&14&19&20&18
\end{array} $ };
\end{tikzpicture}\\%
\vskip-12ex
\hspace*{2em}
\begin{tikzpicture}[scale=0.37, line width=1pt]
\def\vertex#1,#2,#3,{\draw [blue, fill=white] (#1,#2) circle
[radius=0.7] node [black] {$#3$};} \draw [blue] (0,0)--(30,10); \draw
[blue] (0,0)--(0,-6); \draw [blue] (9,3)--(9,-3); \draw [blue]
(18,6)--(18,0); \draw [blue] (15,5)--(15,8); \draw [blue]
(24,8)--(24,11); \draw [blue] (30,10)--(30,13); \vertex0,0,1,
\vertex3,1,5, \vertex6,2,2, \vertex9,3,7, \vertex12,4,11,
\vertex15,5,8, \vertex18,6,13, \vertex21,7,17, \vertex24,8,14,
\vertex27,9,19, \vertex30,10,20, \vertex15,8,6, \vertex24,11,12,
\vertex30,13,18, \vertex0,-3,4, \vertex0,-6,3, \vertex9,0,10,
\vertex9,-3,9, \vertex18,3,16, \vertex18,0,15,
\end{tikzpicture}
\caption{\label{fig-35124867} The Hasse diagram of the pattern poset
$\ppm(3)$, for $\pi=35124867$ (whose inverse is 34152786) and $m=2$,
with its defining stack.}
\end{figure}

The following theorem characterizes the trees among pattern posets.
The idea of the proof is that having a cycle in the Hasse diagram,
seen as a graph, implies that some four elements, $a<x,y<b$, of that
cycle must satisfy the conditions in the theorem, and conversely.  We
first need a lemma.

A \emph{simple cycle} in a graph is a sequence of vertices, each
connected by edges to its predecessor and successor, and such that the
only repeated vertex in the sequence is the first and last one.  If a
graph has a cycle then it necessarily has a simple cycle. A chain $C$
is \emph{saturated} if no element of the poset lies strictly between
two consecutive elements of~$C$.  A chain $x_1<x_2<\cdots<x_n$ being
saturated is equivalent to each of its inequalities being a cover
relation.  Note that if $a<b$ is a cover relation in a pattern poset
then $a$ must immediately precede $b$ on some line of the stack.  The
converse is not true in general, because we might have another element
$x$ between $a$ and $b$ on some other line of the stack.

In the remainder of the paper, the \emph{interior} of a vector or
sequence of any sort is that thing itself take away its first and last
element.

\begin{lemma}\label{lemma-cycle-in-core} If a pattern poset $\clp$ is
not a tree then it has a simple cycle that consists of the two
elements $a$ and $b$ and two saturated chains $C$ and $D$ that are
disjoint apart from both having $a$ as minimum and $b$ as maximum, and
such that each element in the interior of $C$ is incomparable to every
element in the interior of $D$.
\end{lemma}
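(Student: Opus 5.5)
Since $\clp$ is connected (the occurrences overlap), failing to be a tree means its Hasse diagram, viewed as an undirected graph, contains a cycle, and hence a simple cycle. The plan is to take a simple cycle $Z$ that is as short as possible and show it already has the required shape.

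\emph{Step 1: a shortest cycle has one minimum and one maximum.} Orient each edge of $Z$ upward, as a cover relation. Call a vertex of $Z$ a local minimum if both of its neighbours on $Z$ lie above it, and a local maximum if both lie below. Local minima and maxima alternate around $Z$, and there is at least one of each. If there is exactly one of each, call them $a$ and $b$. Then $Z$ splits into two upward paths from $a$ to $b$. Each is a chain of cover relations, hence a saturated chain; call them $C$ and $D$. Being parts of a simple cycle, they are disjoint apart from $a$ and $b$.

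Suppose instead there are two or more local maxima. I would use Theorem~\ref{thm-core-lattice}, more exactly the version proved there: any two elements with a common upper bound have a least upper bound. Pick a local minimum $u$ between two local maxima $x,y$ on $Z$. Its two neighbours $u_1,u_2$ on $Z$ both cover $u$, and they have a common upper bound only if they have a join. The plan is to reroute $Z$ through saturated chains from $u_1,u_2$ up to their join, or to use the dual statement at a local maximum, to obtain a strictly shorter cycle or fewer local extrema. This contradicts minimality.

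\emph{Step 2: the interiors are incomparable.} Let $u\in C$ and $v\in D$ be interior elements, and suppose $u<v$. Take a saturated chain from $u$ to $v$. Combined with the part of $C$ from $a$ to $u$ and the part of $D$ from $v$ to $b$, it produces a cycle. That cycle contains a strictly shorter simple cycle, unless the new chain coincides with an existing edge. It cannot be an existing edge, because $u$ and $v$ are not adjacent on $Z$, as they lie on different sides. To make the length comparison work, I would choose the comparable pair $(u,v)$ with $u$ lowest on $C$ and $v$ highest on $D$, or simply minimise over all cycles of the required form. Then the cycle formed by the portion of $C$ from $u$ up to $b$, the portion of $D$ from $v$ up to $b$, and the chain from $u$ to $v$ has its minimum at $u$. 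If it is shorter we are done by induction. The case $v<u$ is symmetric.

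\emph{Main obstacle.} The hard part is Step 1: a shortest cycle need not obviously be a single "diamond". The cleanest route is probably to avoid shortest cycles altogether. Start from any simple cycle and take a local minimum $a'$ whose two neighbours $c_1,c_2$ on the cycle are distinct covers. Follow the cycle upward from $c_1$ and from $c_2$; because the cycle closes up, the two upward paths eventually give a common upper bound. Let $b$ be their least upper bound, which exists by the proof of Theorem~\ref{thm-core-lattice}. Take saturated chains $c_1\to b$ and $c_2\to b$, set $a=a'$, and then shrink the configuration: replace $a$ by a later common point and $b$ by an earlier common point until the two chains are disjoint except at their ends. The incomparability argument of Step 2 then finishes the proof, and it mirrors the argument in the proof of Theorem~\ref{thm-core-lattice}.
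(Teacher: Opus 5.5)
Your fallback route (under ``Main obstacle'') has the same skeleton as the paper's proof. You take an element $a$ covered by two distinct elements $c_1,c_2$, the least upper bound $b=c_1\vee c_2$ supplied by the argument in the proof of Theorem~\ref{thm-core-lattice}, and saturated chains from $c_1$ and $c_2$ up to $b$. However, that argument only produces $b$ once $c_1,c_2$ have \emph{some} common upper bound, and your justification for this fails. A cycle in a Hasse diagram can have several local maxima. Following it upward from $c_1$ and from $c_2$ then ends at different local maxima, with no element of the cycle above both. Your Step~1, which was meant to exclude such cycles, is unproved: rerouting through $u_1\vee u_2$ can lengthen the cycle, so minimality yields no contradiction. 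Nor is a common upper bound automatic for two covers of one element. For $\pi=124635$ with $m=2$ (Figure~\ref{fig-124635}), the element $6$ is covered by $9$ and by the maximal element $4$. The missing idea is the paper's opening step. By Theorem~\ref{thm-branch-is-chain}, every branch is a chain attached to the core by a single edge, so a simple cycle cannot enter a branch and therefore lies in the core. The top element of the core is then a common upper bound of $c_1$ and $c_2$.

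Your Step~2, to which you defer the incomparability, also does not work as written. A saturated chain from $u$ to $v$ can be long and can meet $C$ or $D$ again, so the resulting closed walk need be neither shorter nor simple, and no induction is actually set up. Step~2 is unnecessary once $b=c_1\vee c_2$. If an interior element $u$ of $C$ and an interior element $v$ of $D$ satisfied $u\le v$, then $v$ would be a common upper bound of $c_1,c_2$ strictly below $b$; the case $v\le u$ is symmetric. The same observation shows the two chains share only $a$ and $b$, so your shrinking step is vacuous. With these two repairs your argument is the paper's. You take $a$ to be a local minimum of the cycle, whereas the paper takes the lowest core element covered by two elements. This is a harmless variation, with the small advantage that $c_1,c_2$ are then automatically core elements.
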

\begin{proof} If a pattern poset is not a tree then it has a simple
cycle, seen as a graph, and such a cycle must belong entirely to the
core of $\clp$.  Namely, a cycle that wanders onto a branch can only
be closed by leaving that branch again, via its only edge connected to
the core, and so would not be simple.

Let $a$ be the smallest element of the core of $\clp$ that is covered
by two different elements $x_1$ and $y_1$. (If $a$ is not the bottom
element of $\clp$ then it is the maximum element in the chain of
elements at the bottom of $\clp$ all of whose other elements are
covered by only one element.)  Since the core of $\clp$ is a lattice,
by Theorem~\ref{thm-core-lattice}, $x_1$ and $y_1$ have a least upper
bound $b$.  Then $\clp$ has two saturated chains
$$
 C: a<x_1<x_2<\cdots<x_r<b ~~~\mathrm{and}~~~ D:
a<y_1<y_2<\cdots<y_s<b,
$$
that are disjoint apart from $a$ and $b$, since if $x_i=y_\ell$ for
some $i$ and $\ell$ then $x_i$ would be an upper bound for $x_1$ and
$y_1$, so~$b$ would not be a least upper bound for $x_1$ and $y_1$.
Moreover, if $x_i<y_\ell$ for some $i$ and $\ell$ then $y_\ell$ would
be an upper bound for $x_1$ and $y_1$, so~$b$ would not be a least
upper bound, and analogously if $y_\ell<x_i$.
\end{proof}

We now present a theorem that completely characterizes pattern posets
that are trees, in terms of the stack.  Note that if the overlap $m$
is at most half the length $K$ of the pattern here then, by
Lemma~\ref{lemma-only-two-lines}, any elements $a,b$ that appear on
two lines of the stack appear only on two adjacent lines, which makes
the proof almost trivial.

\begin{theorem}\label{thm-posets-trees} A pattern poset $\ppmj$ has a
cycle if and only if there are two lines $\shift_s,\shift_t$, with
$s<t$, and elements $a,b,x,y$ satisfying the following conditions:
\begin{itemize}
\item No line contains both $x$ and $y$.
\item On $\shift_s$, $x$ lies between $a$ and $b$.
\item On $\shift_t$, $y$ lies between $a$ and $b$.
\item No element lies between $a$ and $b$ on two distinct lines among
$\shift_s,\shift_{s+1},\ldots,\shift_t$.
\end{itemize}
\end{theorem}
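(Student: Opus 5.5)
The plan is to prove the two directions separately, using Lemma~\ref{lemma-cycle-in-core} for ``only if'' and building an explicit cycle for ``if''.

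\emph{Only if.} Assume $\ppmj$ has a cycle. By Lemma~\ref{lemma-cycle-in-core} there are elements $a<b$ and two saturated chains
$C: a<x_1<\cdots<x_r<b$ and $D: a<y_1<\cdots<y_q<b$. They are disjoint apart from their endpoints, and every interior element of $C$ is incomparable to every interior element of $D$. Since the chains are distinct and saturated, both interiors are nonempty. Take $x=x_1$ and $y=y_1$. These are incomparable, so no line contains both, which gives the first condition.

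Each cover relation in $C$ lies on a single line. By Lemma~\ref{lem-successive-lines}, the set of lines meeting the interior of $C$ is an interval, and likewise for $D$. As in the proof of Theorem~\ref{thm-core-lattice}, these two intervals are disjoint. Without loss of generality the $C$-interval precedes the $D$-interval; let $\shift_s$ be the last line of the first and $\shift_t$ the first line of the second.

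Both $a$ and $b$ occur on a line of each interval, since each shares a line with its neighbour on $C$ and on $D$. Lemma~\ref{lem-successive-lines} then puts $a$ and $b$ on every line from $\shift_{s'}$ to $\shift_{t'}$, for suitable $s'\le s<t\le t'$.

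The main obstacle is to show that on $\shift_s$ some element of $C$ (or a suitable substitute) lies strictly between $a$ and $b$. My first attempt: $a<b$ holds on every line containing both, and saturatedness of $C$ combined with the interval structure should let me walk along $C$. Each interior $x_i$ lies on lines inside the $C$-interval. If I choose $s$ to be a line containing $a$, $b$ and some $x_i$, then $x_i$ lies between $a$ and $b$ there, since $a<x_i<b$ and all three lie on that line. I would then redefine $x:=x_i$, and choose $y_k$ and $t$ symmetrically.

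For the fourth condition: if some $z$ lay between $a$ and $b$ on two distinct lines among $\shift_s,\ldots,\shift_t$, then $z$ would lie on both sides of the gap between the intervals, or else within one of them. In the first case $z$ is comparable to both an interior element of $C$ and an interior element of $D$. I would then replace the chains by shorter ones through $z$, contradicting the choice in Lemma~\ref{lemma-cycle-in-core} ($b$ a least upper bound, and the incomparability). To make this work I will pick $s,t$ as close as possible, with $t$ minimal after $s$.

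\emph{If.} Given $a,b,x,y$ and $s<t$, $a$ and $b$ lie on $\shift_s$ and $\shift_t$, hence on every line in between. On $\shift_s$, the segment from $a$ to $b$ gives a saturated chain in the Hasse diagram through $x$, after refining to covers. Similarly, $\shift_t$ gives one through $y$.

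Because no element lies between $a$ and $b$ on two of the lines $\shift_s,\ldots,\shift_t$, the segments on distinct lines share no interior element. Refining each comparison to cover relations might reroute through other elements, but any such element lies between $a$ and $b$ in the poset. I expect to argue that the refinement stays within the elements appearing on that line's segment, using Lemma~\ref{lem-successive-lines}. Taking the segments on $\shift_s$ and $\shift_t$ then produces two $a$--$b$ paths with disjoint interiors. They are distinct because $x$ and $y$ never share a line, so $x\ne y$ and the paths differ. Their union therefore contains a cycle.
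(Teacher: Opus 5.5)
Your overall architecture matches the paper's: chains $C$ and $D$ whose lines form disjoint intervals, with $\shift_s$ the last line of the first interval and $\shift_t$ the first of the second. Both directions, however, have a genuine gap.

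\textbf{The ``if'' direction.} You never prove that $x$ and $y$ are incomparable, and neither of your substitutes works.

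First, you expect that refining the $\shift_s$-segment from $a$ to $b$ into cover relations stays inside that segment. This is false. Take $\pi=1346257$, $m=3$, $j=3$. The stack is $\shift_0=1\,5\,2\,3\,6\,4\,7$, $\shift_1=5\,9\,6\,7\,10\,8\,11$, $\shift_2=9\,13\,10\,11\,14\,12\,15$. The choice $s=1$, $t=2$, $a=9$, $b=10$, $x=6$, $y=13$ satisfies all four conditions. Yet $6,7$ are adjacent on $\shift_1$ while $6<4<7$ via $\shift_0$, so the refined chain leaves the segment.

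Second, ``$x\ne y$, so the paths differ'' is a non sequitur. If, say, $x<y$, nothing prevents both refined chains from passing through both $x$ and $y$ and coinciding.

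The missing step is the paper's argument:
\begin{enumerate}
\item Any comparison between $x$ and $y$ can be realized on $\shift_s,\ldots,\shift_t$ alone. A detour through earlier (later) lines enters and leaves at letters of $\shift_s$ (resp.\ $\shift_t$) and can be short-cut on that line.
\item Since no line contains both $x$ and $y$, the comparison must change lines at some $z$ with $a<z<b$.
\item That $z$ then lies between $a$ and $b$ on two of these lines, contradicting the fourth condition.
\end{enumerate}
Once $x$ and $y$ are incomparable, any saturated chains from $a$ to $b$ through $x$ and through $y$ are distinct, and their union contains a cycle.

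\textbf{The ``only if'' direction.} Your first case ($z$ on both $\shift_s$ and $\shift_t$) does close, and directly. Here $z$ is comparable to an interior element of $C$ on $\shift_s$ and to one of $D$ on $\shift_t$. Each of the four possible orderings contradicts either that $b$ is the least upper bound of $x_1,y_1$, or that the interiors are incomparable (using that $x_1,y_1$ are the only covers of $a$). For $m\le K/2$ this is the only case, since then $t=s+1$.

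For $m>K/2$, however, there may be lines strictly between $\shift_s$ and $\shift_t$. An element $z$ lying between $a$ and $b$ on such a line is in neither chain. Lemma~\ref{lemma-cycle-in-core} does not exclude this: its $C$ and $D$ are arbitrary saturated chains and $[a,b]$ need not equal $C\cup D$. So there is no ``choice'' to contradict.

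The paper avoids this by taking $[a,b]$ to be a \emph{minimal} interval that is not a chain. Minimality forces distinct maximal chains of $[a,b]$ to meet only at $a,b$, with pairwise incomparable interiors. Since $a$ has at most two covers, $[a,b]=C\cup D$. Then any element between $a$ and $b$ on a line among $\shift_s,\ldots,\shift_t$ is interior to $C$ (hence occurs only on $\shift_s$) or to $D$ (hence only on $\shift_t$). The fourth condition is then immediate.

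Your ``as close as possible'' idea can also be made rigorous. Choose the pair $(C,D)$ to minimize $t-s$, and reroute $C$ (or $D$) through such a $z$; this shrinks the gap. But that argument has to be supplied.
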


\begin{proof} Suppose first that there are elements and lines as
described.  We show that $x$ and $y$ are incomparable.

Any comparison between $x$ and $y$ can be established using only the
lines from $\shift_s$ to $\shift_t$. Indeed, any part of a comparison
passing through earlier lines has its entry and exit letters on
$\shift_s$, and can therefore be replaced by their comparison on that
line. The same argument applies to lines after $\shift_t$.

If $x$ and $y$ were comparable, a sequence of comparisons on these
lines would consequently establish $x<y$, or $y<x$.  Since no line
contains both, this sequence must pass from one line to another at
some intermediate element $z$. But $z$ then lies strictly between $a$
and $b$ on both lines, contrary to the last condition. Thus $x$ and
$y$ are incomparable. Since both lie between $a$ and $b$, there are
two distinct saturated chains from $a$ to $b$, whose union contains a
cycle. Hence $\ppmj$ is not a tree.

Conversely, suppose $\ppmj$ is not a tree. By
Lemma~\ref{lemma-cycle-in-core}, it has an interval that is not a
chain. Choose such an interval $[a,b]$ with as few elements as
possible. Every proper subinterval is then a chain.

Distinct maximal chains in $[a,b]$ can meet only at $a$ and $b$, and
their interior elements are pairwise incomparable.  Otherwise, a
meeting or comparison between them would give two distinct maximal
chains in a proper subinterval.  Since $a$ is covered by at most two
elements, $[a,b]$ therefore consists of exactly two saturated chains
$C$ and $D$, disjoint apart from their endpoints.

The lines containing interior elements of $C$ are consecutive: each
element occurs on consecutive lines, and consecutive elements of $C$
occur together on some line because their comparison is a cover
relation. The same holds for $D$.  These two sets of lines are
disjoint, since no line can contain incomparable elements. We may
assume that the lines containing interior elements of $C$ precede
those containing interior elements of $D$.

Let $\shift_s$ be the last line containing an interior element $x$ of
$C$, and let $\shift_t$ be the first line containing an interior
element $y$ of $D$. Each of $a$ and $b$ occurs on a line with an
interior element of each chain, by the cover relations at the ends of
the chains. Consequently, both $a$ and $b$ occur on every line from
$\shift_s$ to $\shift_t$.  Thus $x$ and $y$ lie between $a$ and $b$ on
their respective lines, and no line contains both.

Finally, every element strictly between $a$ and $b$ belongs to $C$ or
$D$. Among the lines from $\shift_s$ to $\shift_t$, an interior
element of $C$ can occur only on $\shift_s$, and an interior element
of $D$ only on $\shift_t$. No such element can therefore occur on two
of these lines. This proves the last condition and completes the
proof.
\end{proof}

An example illustrating why in the above theorem it is not enough to
examine just the two lines $\shift_s$ and $\shift_t$ is $\pi=13526748$
with overlap 5, whose 3-stack is
$$
\begin{array}{ccccccccccccc} \shift_0=\;\;&1&4&2&7&3&5&6&8\\
\shift_1=\;\;&4&7&5&10&6&8&9&11\\ \shift_2=\;\;&7&10&8&13&9&11&12&14
\end{array}
$$
With $a=7,b=8,x=3,y=10$ we see that (i) no line contains both 3 and
10, (ii) in $\shift_0$ 3 lies between 7 and 8, (iii) in $\shift_2$ 10
lies between 7 and 8, and (iv) on these two lines there is no shared
element between 7 and 8.  But, 5 lies between 7 and 8 in both
$\shift_0$ and $\shift_1$, so $3\prec5\prec10$ in $\pp{5}(3)$ and this
poset has no cycle, it is in fact a chain.

In~\cite[Section 2.2]{Dwyer-Elizalde-2018}, Dwyer and Elizalde
describe the posets we are discussing here in the case of overlap 1,
for patterns they call \emph{non-overlapping}, which are those whose
occurrences can overlap in at most one position.
\begin{corollary}\label{coro-olap-1-tree} For overlap 1 and any
$K$-pattern $\pi$, the poset $\ppmj$ is a tree.  It is a chain (for
any $j>1$) if and only if the prefix of $\pi$ is 1 and the suffix is
$K$ or vice versa.
\end{corollary}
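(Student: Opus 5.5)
For the tree part I will apply Theorem~\ref{thm-posets-trees}. When $m=1$ we have $m\le K/2$, so Lemma~\ref{lemma-only-two-lines} says every element lies on at most two lines, and two lines sharing an element are adjacent. Moreover, $\shift_i$ and $\shift_{i+1}$ share exactly one letter, namely $K+i(K-1)$, by Lemma~\ref{lemma-same-order}.

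A cycle would require two distinct elements $a,b$ that both lie on $\shift_s$ and on $\shift_t$ with $s<t$. By Lemma~\ref{lem-successive-lines} they would then lie on $\shift_s$ and $\shift_{s+1}$. But those two lines share only one element, a contradiction. Hence no quadruple as in Theorem~\ref{thm-posets-trees} exists, and $\ppmj$ is a tree. Alternatively, one can note directly that the Hasse diagram is a union of $j$ paths, one per line, glued successively at single vertices, so it cannot contain a cycle. In either form this step is essentially immediate.

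For the chain part, since the poset is a tree, Definition~\ref{def-tree-chain} reduces being a chain to having a unique minimal and a unique maximal element. By Proposition~\ref{prop-min-max-elts}, for $j>1$ this happens iff $1\in\{\pi_1,\pi_K\}$ and $K\in\{\pi_1,\pi_K\}$. Since $1\ne K$, this means exactly $\{\pi_1,\pi_K\}=\{1,K\}$, which is the condition that the prefix is $1$ and the suffix is $K$, or vice versa.

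One point needs care in the converse. Definition~\ref{def-tree-chain} calls a tree with unique minimum and maximum a chain, and we should confirm this agrees with being a total order. I will argue it directly. Take $\pi_1=1$, $\pi_K=K$ (the other case follows by Lemma~\ref{lem-dual} or by reversal). Then each line $\shift_i$ begins with $1+i(K-1)$ and ends with $K+i(K-1)$, so the last letter of $\shift_i$ is the first letter of $\shift_{i+1}$. Concatenating the lines therefore gives a single total order, and the poset is a chain. The only mild obstacle is this reconciliation of the definition with total order, and the explicit concatenation handles it.
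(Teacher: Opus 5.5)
Your proposal is correct and follows the paper's own proof. For $m=1$ distinct lines of the stack share at most one element, so no pair $a,b$ as in Theorem~\ref{thm-posets-trees} can exist, and Proposition~\ref{prop-min-max-elts} then gives the chain criterion. Your explicit concatenation check that the poset is a total order is a harmless addition that the paper leaves implicit; it also holds in general, since two distinct saturated chains from the unique minimum to the unique maximum of a tree would form a cycle.
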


\begin{proof} For overlap 1 and $i\ne \ell$, $\shift_i$ and
$\shift_\ell$ can have at most one element in common. So, there are no
pairs of repeated elements $a,b$ to form a cycle.  By
Proposition~\ref{prop-min-max-elts}, $\ppmj$ has a unique minimal
element and a unique maximal element if and only if the prefix and
suffix contain both 1 and $K$, and a tree is a chain if and only if it
has a unique minimal and a unique maximal element.
\end{proof}

Recall from Corollary~\ref{coro-unique-min-max} that if $\ppmj$ is a
tree and $m\ge K/2$ then $\ppmj$ is necessarily a chain.  We now give
necessary and sufficient conditions for this when $m<K/2$. This
characterization also holds for $m>K/2$, but assuming $m\le K/2$
simplifies the proof.

\begin{proposition}\label{prop-tree-is-chain} Let $\pi$ be a
$K$-pattern.  If $m\le K/2$ and $\ppmj$ is a tree then $\ppmj$ is a
chain if and only if it satisfies the following conditions:
\begin{itemize}
    \item If an element $x$ of $\ppmj$ is the leftmost repeated
element on two different lines of the stack then at least one of the
two sets of elements to the left of $x$ is empty.
    \item If an element $x$ of $\ppmj$ is the rightmost repeated
element on two different lines of the stack then at least one of the
two sets of elements to the right of $x$ is empty.
\end{itemize}
\end{proposition}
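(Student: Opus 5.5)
The plan is to reduce the statement to Proposition~\ref{prop-min-max-elts} and the fact that a tree poset is a chain exactly when it has a unique minimal and a unique maximal element (Definition~\ref{def-tree-chain}). Since $m\le K/2$, Lemma~\ref{lemma-only-two-lines} says that any repeated element lies on exactly two lines, and these lines are adjacent, $\shift_i$ and $\shift_{i+1}$. By Lemma~\ref{lemma-same-order}, the repeated elements of $\shift_i\cap\shift_{i+1}$ are the $m$ largest letters of $\shift_i$ and the $m$ smallest of $\shift_{i+1}$, and they occur in the same order on both lines. On $\shift_i$ they sit in the places given by $\bs$, and on $\shift_{i+1}$ in the places given by $\bp$. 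The leftmost repeated element $x$ therefore occupies place $\tau$-position of the least letter of $\bs$ in $\shift_i$ and of the least letter of $\bp$ in $\shift_{i+1}$. The set to its left on $\shift_i$ is empty exactly when the first letter of $\pi^{-1}$ points into the suffix, i.e.\ $1\in\bs$. The set on $\shift_{i+1}$ is empty exactly when $1\in\bp$. So the first condition translates to ``$1$ lies in $\bp$ or in $\bs$''. Symmetrically, the second condition translates to ``$K$ lies in $\bp$ or in $\bs$''. This uses that the places of the repeated letters in each line are those of the prefix/suffix, which is independent of $i$, so the condition is uniform over all pairs of lines.

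Having done this translation, I would invoke the last sentence of Proposition~\ref{prop-min-max-elts}. For $j>1$, $\ppmj$ has a unique minimal element iff $1\in\bp\cup\bs$, and a unique maximal element iff $K\in\bp\cup\bs$. Combined with the assumption that $\ppmj$ is a tree, the two conditions hold iff the poset has a unique minimum and maximum, iff it is a chain. The case $j=1$ is trivial: there are no repeated elements, so the conditions hold vacuously, and $\ppm(1)$ is a chain.

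The main point requiring care is the translation step. I have to check that ``the set of elements to the left of $x$ on a line'' is governed by the position of the smallest repeated letter relative to the first position of the line. Concretely, in $\shift_i$ the first letter is $\tau_1+id$, and it is repeated iff $\pi_{\tau_1}=1$ lies among the last $m$ places of $\pi$. If it is not repeated, then something precedes $x$ on $\shift_i$. Doing this precisely, including the possibility that no letter at all is repeated when $\bp\not\sim\bs$ (excluded by convention), should be a short but careful index computation. If the direct translation proves awkward, an alternative is to argue directly. A nonempty left part on both lines yields two distinct first-letters of lines preceding $x$ with no other connection, since they are unrepeated, hence two minimal elements. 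Conversely, if every leftmost repeated element has an empty side, one follows the first letters down to a single minimum. Dually for maxima, via Lemma~\ref{lem-dual}.
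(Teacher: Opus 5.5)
Your proposal is correct and follows essentially the paper's route. Like the paper, you show that for $j>1$ the first condition holds exactly when $1\in\bp\cup\bs$ and the second exactly when $K\in\bp\cup\bs$, and then conclude via Proposition~\ref{prop-min-max-elts} and the fact that a tree with a unique minimal and a unique maximal element is a chain. Your single biconditional translation, using that the leftmost shared letter sits in place $\min\bs$ on $\shift_i$ and in place $\min\bp$ on $\shift_{i+1}$, is a slightly cleaner packaging of the paper's two one-directional arguments.
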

\begin{proof} The case $j=1$ is immediate. Let $j>1$. A tree poset is
a chain if and only if it has a unique minimal element and a unique
maximal element.
    
    If $\ppmj$ has a unique minimal element and a unique maximal
element then, by Proposition~\ref{prop-min-max-elts}, the $m$-prefix
and $m$-suffix of $\pi$ between them contain both 1 and $K$.  If 1 is
contained in the $m$-prefix then the first element of $\shift_0$ is at
most $m$, and consequently the first element

of $\shift_i$ is at most $m+i(K-m)$.  But the largest element of
$\shift_{i-1}$, for $i>0$, is $K+(i-1)(K-m)=m+i(K-m)$, so the first
element of $\shift_i$ also appears in $\shift_{i-1}$, and must
consequently be the leftmost repeated element in both, by
Lemma~\ref{lemma-same-order}.  Also, since $m\le K/2$, no element
appears on more than two lines of the stack, by
Lemma~\ref{lemma-only-two-lines}.  Thus the set of elements to its
left on one of the two lines is empty.  The cases when 1 belongs to
the suffix, and when $K$ belongs to the prefix or suffix, are
analogous.

    Conversely, suppose $\ppmj$ does not have a unique minimal
element.  Then, by Proposition~\ref{prop-min-max-elts}, 1 belongs
neither to the $m$-prefix nor the $m$-suffix of $\pi$.  Since 1 does
not belong to the $m$-prefix, the first element of $\shift_{0}$ is at
least $m+1$ and, in general, the first element of $\shift_{i}$ is at
least $m+1+i(K-m)$, which is greater than the greatest element of
$\shift_{i-1}$, which is $K+(i-1)(K-m)$. Thus, the leftmost element in
$\shift_{i}$ is not repeated in $\shift_{i-1}$.  Also, since 1 does
not belong to the $m$-suffix, the first element of $\shift_0$ is at
most $K-m$ and, in general, the first element of $\shift_{i-1}$ is at
most $K-m+(i-1)(K-m)=i(K-m)$, which is smaller than the smallest
element of $\shift_i$, which is $1+i(K-m)$.  Thus, the leftmost
element in $\shift_{i-1}$ is not repeated in $\shift_{i}$.

Therefore, the leftmost element $\ell$ that appears in both $\shift_i$
and $\shift_{i-1}$ is not in the first place in either, and thus
$\ell$ has an element to its left on both lines, and two such elements
can not be identical since that would constitute a different leftmost
repeated element. Both sets of elements preceding it are therefore
nonempty, contrary to the first condition. The case when $\ppmj$ does
not have a unique maximal element is analogous.
\end{proof} We now wish to express the above results, when $m\le K/2$,
in terms of a $K$-pattern $\pi$ rather than its stack.  To simplify
the argument we assume, without loss of generality (see
Remark~\ref{remark-symm-posets}), that the elements of the prefix and
suffix, respectively, of $\pi$ are in increasing order.  We then
define the \emph{gap vector} of the $m$-prefix
$\bp=\bp_1\bp_2\ldots\bp_m$ as
$$
\mathbf{dp}= (\bdp_1,\bdp_2,\ldots,\bdp_{m+1})=
(\bp_1-1,\bp_2-\bp_1-1,\ldots,\bp_m-\bp_{m-1}-1,K-\bp_m).
$$
The gap vector $\mathbf{ds}$ for the suffix
$\bs=\bs_1\bs_2\ldots\bs_m$ is defined in the analogous way, with
$\bs_i$ replacing $\bp_i$.  Note that $\bdp$ and $\bds$ both have
length $m+1$.

\begin{theorem}\label{thm-diff-vectors}
  Let $\pi$ be a $K$-pattern, $m\le K/2$ and $j\ge2$.  The poset $\ppmj$ is a
  tree if and only if the interiors of the gap vectors of the $m$-prefix and
  $m$-suffix of $\pi$ are empty or have dot product 0.  Also, $\ppmj$ is a chain
  if and only if the dot product of the gap vectors is 0.
\end{theorem}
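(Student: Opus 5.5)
The plan is to reduce everything to the two-line situation guaranteed by Lemma~\ref{lemma-only-two-lines}. Since $m\le K/2$, each element lies on at most two lines, and those lines are adjacent. So in Theorem~\ref{thm-posets-trees} only the case $t=s+1$ can occur: $a$ and $b$ must lie on both $\shift_s$ and $\shift_t$, while $x$ lies only on lines $\le s$ and $y$ only on lines $\ge t$. By Lemma~\ref{lemma-same-order}, the letters shared by $\shift_i$ and $\shift_{i+1}$ are the $m$ values $K-m+1+id,\ldots,K+id$, where $d=K-m$. By our convention the prefix and suffix are increasing, so these shared values appear in increasing order on both lines. On $\shift_i$ they occupy the places $\bs_1<\cdots<\bs_m$, and on $\shift_{i+1}$ the places $\bp_1<\cdots<\bp_m$. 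The key counting observation follows from this. Between the $r$th and $(r+1)$st shared letter, $\shift_i$ has exactly $\bs_{r+1}-\bs_r-1=\bds_{r+1}$ letters and $\shift_{i+1}$ has exactly $\bdp_{r+1}$ letters, and none of these letters is shared. Likewise, $\shift_i$ has $\bds_1$ letters before the first shared letter and $\bds_{m+1}$ after the last, and $\shift_{i+1}$ has $\bdp_1$ and $\bdp_{m+1}$.

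For the tree statement I would apply Theorem~\ref{thm-posets-trees} with $t=s+1$. If $a<b$ are shared letters that are not consecutive among the shared ones, some shared $c$ lies between them on both lines. That violates the last condition, so we may assume $a,b$ are consecutive shared letters, say the $r$th and $(r+1)$st, with $1\le r\le m-1$. The last condition then holds automatically, because the letters between $a$ and $b$ on each line are unshared. The condition that no line contains both $x$ and $y$ also holds: $x$ is unshared with $\shift_{s+1}$, so it lives on lines $\le s$; $y$ lives on lines $\ge s+1$; and $x\ne y$. Hence a cycle exists if and only if some interior index satisfies $\bdp_{r+1}>0$ and $\bds_{r+1}>0$. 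Since all gap entries are nonnegative, this happens if and only if the interior dot product is positive. When $m=1$ the interiors are empty and no such $r$ exists, which matches Corollary~\ref{coro-olap-1-tree}. The hypothesis $j\ge2$ ensures that two lines actually exist, which the converse direction needs.

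For the chain statement, I would combine the tree criterion with Proposition~\ref{prop-tree-is-chain}. The leftmost repeated element on $\shift_i,\shift_{i+1}$ is the first shared letter, which has $\bds_1$ letters to its left on $\shift_i$ and $\bdp_1$ on $\shift_{i+1}$. So the first condition is equivalent to $\bdp_1\bds_1=0$, and similarly the second is equivalent to $\bdp_{m+1}\bds_{m+1}=0$. A full dot product of $0$ forces every term to vanish, since the terms are nonnegative. This gives both the tree property and the two conditions, so $\ppmj$ is a chain. Conversely, a chain is a tree, so the interior terms vanish, and Proposition~\ref{prop-tree-is-chain} kills the end terms. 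One gap must be checked: Proposition~\ref{prop-tree-is-chain} speaks of any pair of lines with a repeated element, but by Lemma~\ref{lemma-only-two-lines} these are only adjacent pairs, and all adjacent pairs look identical up to shifting by $d$.

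The main obstacle I expect is bookkeeping rather than ideas. I must verify carefully that the places of the shared letters in $\shift_i$ are exactly given by the suffix values, and in $\shift_{i+1}$ by the prefix values, in the same increasing order. That is what makes the gap counts line up index-by-index with $\bds$ and $\bdp$. I also need to confirm that the unshared letters between consecutive shared ones do not reappear on any other line in a way that breaks the hypotheses of Theorem~\ref{thm-posets-trees}; Lemma~\ref{lemma-only-two-lines} should handle this.
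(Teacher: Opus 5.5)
Your proof is correct. For the tree criterion it is the paper's argument: Lemma~\ref{lemma-only-two-lines} forces $t=s+1$ in Theorem~\ref{thm-posets-trees}, and the shared letters sit at places $\bs_r$ on one line and $\bp_r$ on the next, so $a,b$ must be consecutive shared letters with $\bds_{r+1}$ and $\bdp_{r+1}$ letters between them. For the chain criterion the paper goes directly through Proposition~\ref{prop-min-max-elts}: a unique minimum holds iff $1$ lies in $\bp$ or $\bs$, iff $\bdp_1\bds_1=0$, and likewise for $K$ at index $m+1$. You instead route through Proposition~\ref{prop-tree-is-chain}, which is itself proved from Proposition~\ref{prop-min-max-elts}, so the difference is only cosmetic.
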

\begin{proof} Note that in the definition of the gap vectors $\bdp$
and $\bds$, the $m$-prefix and the $m$-suffix are assumed to be
increasing.
  
The $\ell$-th coordinate $\bdp_\ell$ of the gap vector of the
$m$-prefix $\bp$ of $\pi$, when $1<\ell\le m$, counts the elements of
$\pi$ that are strictly between $\bp_{\ell-1}$ and $\bp_\ell$ in size,
and likewise for the $m$-suffix~$\bs$.  The $m$ elements that appear
both in $\shift_r$ and $\shift_{r+1}$ in the stack for $\ppmj$ appear
in places $s_1,s_2,\ldots,s_m$ in $\shift_r$ and in places
$p_1,p_2,\ldots,p_m$ in $\shift_{r+1}$.  Thus, between the element $a$
in place $s_i$ and the element $b$ in place $s_{i+1}$ in $\shift_r$,
we have $\bds_{i+1}$ elements, and between the element in place $p_i$,
which is necessarily $a$, and the element in place $p_{i+1}$,
necessarily $b$, in $\shift_{r+1}$, we have $\bdp_{i+1}$ elements.
Also, between $a$ and $b$ on these two lines of the stack there are no
repeated elements.

Thus, the elements $a$ and $b$ that appear in both $\shift_r$ and
$\shift_{r+1}$ (which have no repeated element between them) have some
elements between them on both lines if and only if
\mbox{$\bdp_{i+1}\cdot\bds_{i+1}\ne0$}.  This applies to all
coordinates of the interiors of $\bdp$ and $\bds$, and therefore, by
Theorem~\ref{thm-posets-trees}, $\ppmj$ is a tree if and only if the
interiors of $\bdp$ and $\bds$ have dot product 0 or are empty, as a
repeated element can only appear on two successive lines of the stack
when $m\le K/2$, by Lemma~\ref{lemma-only-two-lines}.

For $\ppmj$ to be a chain it must, in addition to the dot product of the
interiors of $\bdp$ and $\bds$ being 0, have a unique minimal element and a
unique maximal element.  By Proposition~\ref{prop-min-max-elts}, that is
equivalent to $\bp$ and $\bs$ containing, between them, both 1 and $K$. That, in
turn, is equivalent to $(\bdp_1=0 \text{ or } \bds_1=0)$ and
$(\bdp_{m+1}=0 \text{ or } \bds_{m+1}=0)$, which is equivalent to
$\bdp_1\cdot\bds_1=\bdp_{m+1}\cdot\bds_{m+1}=0$, in which case the dot product
of the entire gap vectors $\bdp$ and $\bds$ is 0.
\end{proof}

As an example, if $\pi=124635$ and $m=2$ (see Figure~\ref{fig-124635})
then $\mathbf{dp}=(0,0,4)$ and $\mathbf{ds}=(2,1,1)$. The vector
$\mathbf{dp}$ thus gives the number of elements in $\pi$ that lie
strictly between successive entries of $\bp$ in size, except that its
first entry is the number of elements in $\pi$ strictly smaller than
the first (and smallest) element of $\mathbf{p}$ and the last entry is
the number of elements strictly greater than the last (and largest)
element of $\bp$, and likewise for $\mathbf{ds}$.  Looking at the
first two lines of the stack for $\pi=124635$ in
Figure~\ref{fig-124635}, we see that $\mathbf{ds}=(2,1,1)$ corresponds
to the fact that on the first line of the stack there are two elements
before the first repeated element 5, one element between 5 and the
other repeated element, the 6, and one element after 6 on the first
line.  Similarly, $\mathbf{dp}=(0,0,4)$ corresponds to the fact that
on the second line there is no element preceding 5, no element between
the 5 and the 6 and four elements after the~6.  The dot product of the
interiors is $1\cdot0=0$, and the poset in question is a tree.  It is
not a chain, since the last coordinates of both gap vectors are
positive. If we were to switch 5 and 6 in $\pi$, however, the
resulting poset would become a chain, and $\bds=(2,2,0)$.
\begin{figure}[h] \vskip7ex \hspace*{2em}
\begin{tikzpicture} \draw node { $
\begin{array}{cccccccccc} \shift_0&=&1&2&5&3&6&4\\
\shift_1&=&5&6&9&7&10&8
\end{array} $ };
\end{tikzpicture}\\%
\vskip-10ex \hspace*{10em}
\begin{tikzpicture}[scale=0.5, line width=1pt]
\def\vertex#1,#2,#3,{\draw [blue, fill=white] (#1,#2) circle
[radius=0.7] node [black] {$#3$};}
  \begin{scope}[shift={(0,0)}] \draw [blue] (0,0)--(14,7); \draw
[blue] (8,4)--(6.5, 6.5); \draw [blue] (14,7)--(12.5, 9.5);
\vertex0,0,1, \vertex2,1,2, \vertex4,2,5, \vertex6,3,3, \vertex8,4,6,
\vertex6.5,6.5,4, \vertex10,5,9, \vertex12,6,7, \vertex14,7,10,
\vertex12.5,9.5,8,
\end{scope}
\end{tikzpicture}
\caption{The Hasse diagram of the pattern poset $\ppm(2)$, for
$\pi=124635$}
\label{fig-124635}
\end{figure}


If $1$ belongs to the prefix (or $K$ to the suffix) of a $K$-pattern
whose poset is a tree then the enumeration of linear extensions has a
product formula:

\begin{proposition}\label{prop-gen-prod-formula-trees} Let $\pi$ be a
$K$-pattern with 1 in its prefix, whose poset is a tree. Let
$v_1<\cdots<v_t$ be the vertices of the core that branches are
attached to, let $\ell$ be the length of the branches, and let $N_i$
be the number of vertices strictly above $v_i$ and not on the branch
attached to $v_i$. Then
$$
 |\cls_\pi^m(j)| = \prod_{i=1}^t\binom{N_i+\ell}{\ell}.
$$  
\end{proposition}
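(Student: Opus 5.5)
The plan is to show that the core of $\ppmj$ is a chain and then count linear extensions by inserting the branches into that chain one at a time, starting from the top. Since 1 lies in the prefix, Proposition~\ref{prop-min-max-elts} gives a unique minimal element, the bottom element. By Theorem~\ref{thm-branch-is-chain} (after normalizing with the symmetries of Section~\ref{sec-symmetries}), the poset decomposes into its core together with branches. Because 1 is in the prefix, every branch comes from the sequence $K-b+1,\ldots,K$ in the interfix, so $\ell=b$. By Porism~\ref{porism-branches}, each branch is a chain of length $\ell$ going upwards from its attachment vertex $v_i$.

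First I will show that the core is a chain. The core is a lattice (Theorem~\ref{thm-core-lattice}) and, as a subgraph of the Hasse diagram, it is a tree. If it contained two incomparable elements, their least upper bound and greatest lower bound would give two distinct saturated chains between the same endpoints. These form a cycle, contradicting the tree hypothesis. Hence the core is a chain $c_1<c_2<\cdots<c_M$, with the $v_i$ among its elements and $v_1<\cdots<v_t$ in the core order. The whole poset is therefore this chain with $t$ upward pendant chains of length $\ell$, the $i$th attached at $v_i$. The only relations involving branch elements are that the branch at $v_i$ is a chain lying above $v_i$; there are no other relations, since the Hasse diagram is a tree and the branch meets the core in a single edge.

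Next I will count linear extensions by induction from the top attachment vertex downward. Let $P_i$ be the subposet consisting of all elements strictly above $v_i$ other than those on the branch at $v_i$. Then $|P_i|=N_i$, and $P_i$ consists of the core elements above $v_i$ together with the branches at $v_{i+1},\ldots,v_t$. A linear extension of the set of elements $\ge v_i$ is obtained by taking $v_i$ first and then interleaving two parts: a linear extension of $P_i$, and the branch at $v_i$ placed as a chain. Those two parts have no relations between them. The interleavings are counted by $\binom{N_i+\ell}{\ell}$, so
$$
e(\{\ge v_i\}) = \binom{N_i+\ell}{\ell}\, e(P_i).
$$
Moreover, $e(P_i)=e(\{\ge v_{i+1}\})$, because the core elements strictly between $v_i$ and $v_{i+1}$ are forced to come first, in chain order. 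Iterating this down to $v_1$, and noting that the elements below $v_1$ form a forced chain, gives the product formula.

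The main obstacle is the first step: making sure that ``tree'' really forces the core to be a chain. Concretely, the core must contain no incomparable pair and there must be no hidden relations between branch elements and core elements other than the attachment edge. I would handle this carefully via Lemma~\ref{lemma-cycle-in-core}, which says that any non-chain core interval yields a cycle. I also need to check that, with 1 in the prefix, no downward branches exist, i.e.\ that $a=0$ in Theorem~\ref{thm-branch-is-chain}. That holds because 1 lies in the prefix and not in the interfix.
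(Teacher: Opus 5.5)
Your proposal is correct and follows essentially the same route as the paper. Branches point upward by Porism~\ref{porism-branches}, the core of a tree poset is a chain, the $\ell$ elements of the branch at $v_i$ interleave freely with the $N_i$ elements above $v_i$, and induction gives the product. The only differences are that you run the induction top-down through the up-sets $\{\ge v_i\}$ rather than removing the lowest branch first, and that you spell out why the core is a chain and why there are no hidden relations, both of which the paper leaves implicit.
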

\begin{proof} It follows from Porism~\ref{porism-branches}, since 1 is
in the prefix, that all branches go upwards from the core, which in a
tree is necessarily a chain.  Now, everything outside the lowest
branch is either weakly below its attachment to the core or above that
vertex, so its elements must in a linear extension come after the
former, but can be placed arbitrarily with respect to the latter.
This gives the first factor, and induction gives the formula.
\end{proof}

For a consecutive prefix the numbers $N_i$ have a particularly simple
form, giving the following product.  Recall that if $m\ge K/2$ then a
corresponding tree poset is a chain, so nothing is lost by the
restriction $m\le K/2$ here:

\begin{proposition}\label{prop-prod-formula-trees} Let $\pi$ be a
pattern with prefix $12\cdots m$ and increasing suffix ending in $s$,
where $m\le K/2$.  Let $d=K-m$ and $\ell=K-s$. Then
$$
 |\cls_\pi^m(j)| = \prod_{i=0}^{j-1}\binom{id+\ell}{\ell}.
$$
\end{proposition}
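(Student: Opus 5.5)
The plan is to obtain this as a special case of Proposition~\ref{prop-gen-prod-formula-trees}. That requires three things: checking that the poset is a tree, identifying the branches and their attachment vertices, and computing the numbers $N_i$.

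\emph{Tree property.} Since the prefix is $12\cdots m$, its gap vector is $\bdp=(0,\ldots,0,K-m)$, so the interior of $\bdp$ is identically zero. Its dot product with the interior of $\bds$ is therefore $0$, and by Theorem~\ref{thm-diff-vectors} the poset $\ppmj$ is a tree. The degenerate case $j=1$ is a single chain, and the formula gives the empty-product value $1$. The value $1$ lies in the prefix, and $\bp_1=1<\bs_1$, so the normalisation of Theorem~\ref{thm-branch-is-chain} holds.

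\emph{Branches.} The suffix is increasing with maximum $s$, and the prefix contains only $1,\ldots,m$. Hence the values $s+1,\ldots,K$ all lie in the interfix, and $K-s+1,\ldots,K$ is the maximal run of top values there, so $b=K-s=\ell$. Also $a=0$, since $1$ is in the prefix.

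If $s=K$ then $\ell=0$ and every factor equals $1$. This agrees with the absence of branches, by Porism~\ref{porism-branches}.

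Otherwise $M_p=m<s=M_s$, and case~3 of Theorem~\ref{thm-branch-is-chain} gives an upward branch of length $\ell$ on each line $\shift_r$ with $r<j-1$. That branch consists of the last $\ell$ entries of $\shift_r$, which are the positions of the values $s+1,\ldots,K$ in $\pi$. It is attached to the core vertex $v$ that immediately precedes it on $\shift_r$. There are thus $j-1$ branches, matching the $j-1$ nontrivial factors with $i=1,\ldots,j-1$ (the factor $i=0$ is $1$).

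\emph{Counting the $N_i$.} Since the poset is a tree with a unique minimum, the core is a chain. Because the prefix is $12\cdots m$, the core chain runs through $\shift_0$, then through the new elements of $\shift_1$, and so on. The branch on line $\shift_r$ lies below the branches on later lines.

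Index the branches by $i=j-1-r$. I would then show that the vertices strictly above $v$ and not on its own branch are exactly the elements on lines $\shift_{r+1},\ldots,\shift_{j-1}$ other than those shared with $\shift_r$, possibly corrected by the elements of $\shift_r$ that lie between $v$ and the shared suffix block. The goal is to show that this total equals $id$: here $(j-1-r)d=id$ is the number of elements on those later lines not already on $\shift_r$, and this count includes the branches attached to higher vertices. Proposition~\ref{prop-gen-prod-formula-trees} then yields $\prod_{i}\binom{id+\ell}{\ell}$.

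\emph{Main obstacle.} The delicate step is this bookkeeping of $N_i$. I have to verify precisely where the attachment vertex $v$ sits relative to the $m$ elements that $\shift_r$ shares with $\shift_{r+1}$, and that these shared elements lie below $v$ or exactly compensate. Equivalently, the elements above $v$ must number exactly $id$, and not $id$ plus or minus the number of suffix positions after $v$. I would settle this by using $\bdp=(0,\ldots,0,d)$: on $\shift_{r+1}$ the $m$ shared elements come first, consecutively, and are followed by $d$ new elements. I would check this on the example $124635$ from Figure~\ref{fig-124635} before writing the general count.
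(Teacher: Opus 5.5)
Your plan is essentially the paper's argument. The paper likewise uses Porism~\ref{porism-branches} to get upward branches of length $\ell$ on a core chain, interleaves the lowest branch with the $(j-1)d$ elements above its attachment node, and inducts. In effect it reruns the proof of Proposition~\ref{prop-gen-prod-formula-trees} with $N_i=id$. Your explicit check of the tree property via Theorem~\ref{thm-diff-vectors} is something the paper leaves implicit.

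The step you flag as the main obstacle closes at once, with no correction term:
\begin{itemize}
\item Since $\bs$ is increasing and ends in $s$, we have $\pi_K=s$, so $\tau_s=K$.
\item The attachment vertex on $\shift_r$ is therefore $v=\tau_s+rd=K+rd$. This is the last of the $m$ labels shared with $\shift_{r+1}$.
\item Because $\tau_m=m$, the same label equals $\tau_m+(r+1)d$, which is the $m$th entry of $\shift_{r+1}$.
\end{itemize}
Hence the elements above $v$ and off its own branch are exactly the $d$ entries of each line $\shift_{r+1},\ldots,\shift_{j-1}$ that it does not share with the preceding line. Nothing else qualifies: core elements of $\shift_0,\ldots,\shift_r$ lie below $v$, and branches of earlier lines are incomparable to $v$. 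So $N=(j-1-r)d$ exactly.

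One minor typo: the top run of values in the interfix is $s+1,\ldots,K$, not $K-s+1,\ldots,K$.
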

\begin{proof} Since 1 is in the $m$-prefix of $\pi$, all branches of
$\ppmj$ go upwards, and each has length $\ell$, by
Porism~\ref{porism-branches}.  Because the branches rise at regular
intervals from the core (which in a tree is a chain), to count linear
extensions we may assume that the $j$-th occurrence adds a branch
below the previous branches rather than above them.  The elements of
that branch are incomparable to all $(j-1)d$ elements above the node
$N$ of the core that the branch is attached to, and in a linear
extension the $\ell$ elements of the branch can be placed anywhere
with respect to those elements, but must lie above all elements below
and including $N$.  This gives the factor $\binom{(j-1)d+\ell}{\ell}$
and a straightforward inductive argument completes the proof.
\end{proof}

The special case of Proposition~\ref{prop-prod-formula-trees} when
$m=1$ is Proposition~26 in~\cite{Dotsenko-Khoroshkin-2013}, see also
Figure 12 in~\cite{Elizalde-Noy-2012}.

In particular, the proposition applies to every overlap-$1$ pattern
whose first letter is $1$.

\begin{definition}\label{def-sfact} For integers $n\ge1$ and $k\ge1$,
the \emph{$k$-shift factorial of $n$} is $\displaystyle
n\sfact{k}:=\prod_{0\le i<n/k}(n-ik)$.
\end{definition} Thus, $n\sfact{k}$ is the product of all positive
integers in the sequence $n,(n-k),(n-2k),\cdots$.

For overlap 1 we get the following special case:
\begin{corollary}\label{coro-1-k-1} Let $\pi$ be a $K$-pattern with
$\pi_1=1$ and $\pi_K=K-1$.  The number of permutations that
$(1,j)$-pack $\pi$ is $n\sfact{K-1}$, where $n=K+(j-2)(K-1)$.
\end{corollary}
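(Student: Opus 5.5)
The corollary is the case $m=1$ of Proposition~\ref{prop-prod-formula-trees}, so the plan is to check that the proposition applies, read off its product, and match that product with the $(K-1)$-shift factorial of Definition~\ref{def-sfact}.

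First, the hypotheses of Proposition~\ref{prop-prod-formula-trees}. With $m=1$, the $1$-prefix of $\pi$ is the single letter $\pi_1=1$, which is the prefix $12\cdots m$. The $1$-suffix is the single letter $\pi_K=K-1$, trivially increasing, so $s=K-1$. Since $\pi_1=1$ and $\pi_K=K-1$ are distinct letters, $K\ge 3$, so $m=1\le K/2$ holds. (The tree property needed behind the proposition holds anyway for overlap $1$, by Corollary~\ref{coro-olap-1-tree}.) Here $d=K-1$ and $\ell=K-s=1$.

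Second, evaluate the product. Proposition~\ref{prop-prod-formula-trees} gives
$$
|\cls_\pi^1(j)|=\prod_{i=0}^{j-1}\binom{i(K-1)+1}{1}=\prod_{i=0}^{j-1}\bigl(1+i(K-1)\bigr).
$$

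Third, match this with $n\sfact{K-1}$, where $n=K+(j-2)(K-1)=1+(j-1)(K-1)$. By definition, $n\sfact{K-1}=\prod_{0\le i<n/(K-1)}(n-i(K-1))$. The terms $n-i(K-1)=1+(j-1-i)(K-1)$ run over $1+(j-1)(K-1),\ldots,K,1$ as $i=0,1,\ldots,j-1$. The index range is exactly $0\le i\le j-1$: we have $j-1<n/(K-1)$ because $n-(j-1)(K-1)=1>0$, and $j\ge n/(K-1)$ because $j(K-1)\ge 1+(j-1)(K-1)$, as $K-1\ge 1$. Reindexing $i\mapsto j-1-i$ then turns $n\sfact{K-1}$ into the product above.

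The only step that needs care is this index-range check, including the justification that $K\ge 3$; everything else is a direct substitution.
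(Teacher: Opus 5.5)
Your proof is correct and follows the paper's own route. The paper obtains this corollary as the $m=1$ case of Proposition~\ref{prop-prod-formula-trees}, with $d=K-1$ and $\ell=1$, so that $\prod_{i=0}^{j-1}\bigl(1+i(K-1)\bigr)=n\sfact{K-1}$ for $n=1+(j-1)(K-1)$; your check of the index range (and of $K\ge 3$) fills in bookkeeping the paper leaves implicit.
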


An example of Proposition~\ref{prop-prod-formula-trees} is the pattern
125634 with overlap 2, where $d=4$, $\ell=2$, whose poset is shown in
Figure~\ref{fig-125634-163425-167359248}.  The same figure shows the
poset for the pattern 163425 with overlap 2.  An argument similar to
the one in Proposition~\ref{prop-prod-formula-trees}, also analyzing
the effect on the poset of adding a new occurrence, shows that all
$K$-patterns $\pi$ with $m$-prefix $123\ldots(m-1)K$ and any
increasing $m$-suffix have enumeration of the same form as in
Proposition~\ref{prop-prod-formula-trees}.  Namely, a straightforward
analysis of the stack of the poset for such a pattern shows that each
new occurrence adds $K-m$ elements to the right edge of the Hasse
diagram, when drawn as in Figure~\ref{fig-125634-163425-167359248}.

As with the argument for the tree, we can instead add a new first
occurrence. Its $\ell$ letters between $K-1$ and $K$ form a chain
incomparable to the $(j-1)d$ old elements strictly between these
vertices. All remaining new letters are comparable to every element.
Thus the new factor is $\binom{(j-1)d+\ell}{\ell}$.

\begin{proposition}\label{prop-prod-formula-cones} Let $m\ge2$ and let
$\pi$ be a $K$-pattern with $m$-prefix $123\ldots(m-1)K$ and
increasing $m$-suffix ending in $ab$, and let $\ell=b-a-1$.  Then,
letting $d=K-m$, we have
$$
|\cls_\pi^m(j)| = \prod_{i=0}^{j-1} \binom{id + \ell}{\ell}.
$$
\end{proposition}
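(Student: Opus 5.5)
The plan is to argue by induction on $j$, adding a \emph{new first occurrence} rather than a new last one, as in the remark preceding the statement. Write $\tau=\pi^{-1}$ and $d=K-m$. Label the stack for $\ppmj$ so that the old poset, for $j-1$ occurrences, is carried by $\shift_1,\ldots,\shift_{j-1}$; this is just a relabelling by $+d$, so it is isomorphic to $\ppm(j-1)$. The new line is $\shift_0=\tau$. By Lemma~\ref{lemma-only-two-lines}, $\shift_0$ shares with the rest of the stack exactly the labels $d+1,\ldots,K$. The base case $j=1$ is a single chain with one linear extension, matching the empty product beyond the $i=0$ factor $\binom{\ell}{\ell}=1$.

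\textbf{Step 1: structure of $\shift_1$.} Read $\shift_1=\tau+d$ directly from the prefix $12\cdots(m-1)K$. The values $1,\ldots,m-1$ of $\pi$ sit in positions $1,\ldots,m-1$, and the value $K$ sits in position $m$. Hence $\shift_1$ begins $d+1,d+2,\ldots,d+m-1=K-1$ and ends with $d+m=K$. Consequently, in the old poset the elements $d+1<\cdots<K-1$ form a bottom chain, and $K$ is the unique maximum. Every other old element lies strictly between $K-1$ and $K$. There are $(j-1)d+m-m=(j-1)d$ of these, and they are comparable to the chain $d+1,\ldots,K$ in the obvious way.

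\textbf{Step 2: where the new letters of $\shift_0$ go.} In $\shift_0$ the shared labels $d+1,\ldots,K$ occupy the suffix positions, in increasing order of their values (Lemma~\ref{lemma-same-order}). The labels $K-1$ and $K$ correspond to the suffix values $a$ and $b$, so exactly $\ell=b-a-1$ letters of $\shift_0$ lie between $K-1$ and $K$; none of them is shared. The remaining new letters of $\shift_0$ split into two groups:
\begin{itemize}
\item letters with values below $a$, which sit before $K-1$ and interleave with the chain $d+1<\cdots<K-1$;
\item letters with values above $b$, which sit after $K$ and end with label $m$, the position of the value $K$.
\end{itemize}
Since $\shift_0$ orders the shared labels exactly as $\shift_1$ does, adding $\shift_0$ creates no new relation among old elements. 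The letters of the first group merge with the bottom chain into a longer chain below $K-1$. The letters of the second group lie above the old maximum $K$. All of these are comparable to every element. The $\ell$ letters between $K-1$ and $K$ form a chain lying above $K-1$ and below $K$. I expect them to be incomparable to all $(j-1)d$ old elements strictly between $K-1$ and $K$, since no line contains both a new letter and such an old element.

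\textbf{Step 3: counting.} A linear extension of $\ppmj$ is obtained from one of the old poset by merging the new $\ell$-chain with the $(j-1)d$ old elements of the open interval $(K-1,K)$. The merge is arbitrary, while the placement of all other new letters is forced. This gives the factor $\binom{(j-1)d+\ell}{\ell}$, and induction yields the product.

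The main obstacle is justifying the incomparability claim in Step 2 rigorously. The risk is that a comparison between the $\ell$ new letters and an old interior element might be forced through transitivity via the shared labels. I would handle this with the argument of Theorem~\ref{thm-posets-trees}: any comparison chain must switch lines at a shared element, and every shared element is $\le K-1$ or equal to $K$. A chain through such an element therefore cannot connect an element strictly inside $(K-1,K)$ on $\shift_0$ to one strictly inside it on $\shift_1$. The hypothesis $m\ge2$ is used to ensure that $K-1=d+m-1$ is indeed a shared label distinct from $d$.
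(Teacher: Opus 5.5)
Your proposal is correct and is essentially the paper's own argument. Like the paper, you add a new first occurrence, show that its $\ell$ letters between $K-1$ and $K$ form a chain incomparable to the $(j-1)d$ old elements of that interval, and note that every other new letter is comparable to everything, which gives the factor $\binom{(j-1)d+\ell}{\ell}$. You in fact supply details the paper leaves implicit, namely the ordinal-sum decomposition and the check through shared labels that transitivity forces no comparison. One small point: $m\le K/2$, which you need for Lemma~\ref{lemma-only-two-lines}, is not stated, but it holds automatically, since otherwise the $K$ at position $m$ would lie in the increasing suffix without being its last entry.
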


\begin{figure}[H] \centering
\begin{tikzpicture}[scale=0.42, every node/.style={font=\small},
nodecircle/.style={draw=blue, line width=1.2pt, circle, inner
sep=1.5pt, minimum size=4.5mm, fill=white, text=black,
font=\footnotesize}, posedge/.style={blue, line width=1.2pt, shorten
>=1pt, shorten <=1pt}]
\begin{scope}[xshift=-180mm, yshift=60mm] \node[nodecircle] (n5) at
(3,1.5) {5}; \node[nodecircle] (n9) at (6,3) {9}; \node[nodecircle]
(n13) at (9,4.5) {13}; \node[nodecircle] (n11) at (9,6.0) {11};
\node[nodecircle] (n12) at (9,7.5) {12}; \node[nodecircle] (n14) at
(9,9.0) {14}; \node[nodecircle] (n10) at (6,10.5) {10};
\node[nodecircle] (n6) at (3,12.0) {6}; \node[nodecircle] (n1) at
($(n5)+(0,-3)$) {1}; \node[nodecircle] (n2) at ($(n6)+(0, 3)$) {2};
\node[nodecircle] (n3) at ($(n5)!0.33!(n6)$) {3}; \node[nodecircle]
(n4) at ($(n5)!0.66!(n6)$) {4}; \node[nodecircle] (n7) at
($(n9)!0.33!(n10)$) {7}; \node[nodecircle] (n8) at ($(n9)!0.66!(n10)$)
{8}; \draw[posedge] (n1)--(n5); \draw[posedge] (n5)--(n3);
\draw[posedge] (n3)--(n4); \draw[posedge] (n4)--(n6); \draw[posedge]
(n5)--(n9); \draw[posedge] (n9)--(n7); \draw[posedge] (n7)--(n8);
\draw[posedge] (n8)--(n10); \draw[posedge] (n9)--(n13); \draw[posedge]
(n13)--(n11); \draw[posedge] (n11)--(n12); \draw[posedge]
(n12)--(n14); \draw[posedge] (n2)--(n6); \draw[posedge] (n6)--(n10);
\draw[posedge] (n10)--(n14);
\end{scope}
\begin{scope}[xshift=-60mm, yshift=60mm] \def\dx{1} \def\dy{1.5}
\def\vsep{2.25} \node[nodecircle] (m1) at (0*\dx, 0*\dy) {1};
\node[nodecircle] (m2) at (1*\dx, 1*\dy) {2}; \node[nodecircle] (m5)
at (2*\dx, 2*\dy) {5}; \node[nodecircle] (m6) at (3*\dx, 3*\dy) {6};
\node[nodecircle] (m9) at (4*\dx, 4*\dy) {9}; \node[nodecircle] (m10)
at (5*\dx, 5*\dy) {10}; \node[nodecircle] (m13) at (6*\dx, 6*\dy)
{13}; \node[nodecircle] (m14) at (7*\dx, 7*\dy) {14};
\node[nodecircle] (m3) at ($(m6) + (0, \vsep)$) {3}; \node[nodecircle]
(m4) at ($(m6) + (0, 2*\vsep)$) {4}; \node[nodecircle] (m7) at ($(m10)
+ (0, \vsep)$) {7}; \node[nodecircle] (m8) at ($(m10) + (0, 2*\vsep)$)
{8}; \node[nodecircle] (m11) at ($(m14) + (0, \vsep)$) {11};
\node[nodecircle] (m12) at ($(m14) + (0, 2*\vsep)$) {12};
\draw[posedge] (m1) -- (m2); \draw[posedge] (m2) -- (m5);
\draw[posedge] (m5) -- (m6); \draw[posedge] (m6) -- (m9);
\draw[posedge] (m9) -- (m10); \draw[posedge] (m10) -- (m13);
\draw[posedge] (m13) -- (m14); \draw[posedge] (m6) -- (m3);
\draw[posedge] (m3) -- (m4); \draw[posedge] (m10) -- (m7);
\draw[posedge] (m7) -- (m8); \draw[posedge] (m14) -- (m11);
\draw[posedge] (m11) -- (m12);
\end{scope}
\begin{scope}[xshift=100mm, xscale=-1, scale=1] \node[nodecircle]
(N20) at (11,8) {20}; \node[nodecircle] (N7) at (3,1.9) {7};
\node[nodecircle] (N1) at ($(N7)+(-2,-0.8)$) {1}; \node[nodecircle]
(N19) at (11,5.1) {19}; \node[nodecircle] (N16) at ($(N19)!0.5!(N20)$)
{16}; \node[nodecircle] (N13) at (7,3.5) {13}; \node[nodecircle] (N10)
at (7,6.75) {10}; \node[nodecircle] (N20) at (11,8) {20};
\node[nodecircle] (N17) at (9,9.0) {17}; \node[nodecircle] (N14) at
(7,10) {14}; \node[nodecircle] (N11) at (5,11) {11}; \node[nodecircle]
(N8) at (3,12) {8}; \node[nodecircle] (N5) at ($(N8)+(-1.2, 1.4)$)
{5}; \node[nodecircle] (N2) at ($(N5)+(-1.2, 1.4)$) {2};
\node[nodecircle] (N3) at ($(N2)+(-1.2, 1.4)$) {3}; \node[nodecircle]
(N9) at ($(N3)+(-1.2, 1.4)$) {9}; \node[nodecircle] (N15) at
($(N9)+(-1.2, 1.4)$) {15}; \node[nodecircle] (N21) at ($(N15)+(-1.2,
1.4)$) {21}; \node[nodecircle] (N6) at ($(N9)+(1.4, 1.2)$) {6};
\node[nodecircle] (N12) at ($(N15)+(1.4, 1.2)$) {12};
\node[nodecircle] (N18) at ($(N21)+(1.4, 1.2)$) {18};
\node[nodecircle] (N4) at ($(N7)!0.5!(N8)$) {4}; \draw[posedge]
(N5)--(N2); \draw[posedge] (N2)--(N3); \draw[posedge] (N3)--(N9);
\draw[posedge] (N9)--(N15); \draw[posedge] (N15)--(N21);
\draw[posedge] (N9)--(N6); \draw[posedge] (N15)--(N12); \draw[posedge]
(N21)--(N18); \draw[posedge] (N1)--(N7); \draw[posedge] (N7)--(N4);
\draw[posedge] (N4)--(N8); \draw[posedge] (N7)--(N13); \draw[posedge]
(N13)--(N10); \draw[posedge] (N10)--(N14); \draw[posedge]
(N13)--(N19); \draw[posedge] (N19)--(N16); \draw[posedge]
(N16)--(N20); \draw[posedge] (N20)--(N17); \draw[posedge] (N5)--(N8);
\draw[posedge] (N8)--(N11); \draw[posedge] (N14)--(N11);
\draw[posedge] (N14)--(N17);
\end{scope} \node[anchor=north west] (A) at ($(current bounding
box.north west)+(-5mm,32mm)$) { \setlength{\arraycolsep}{1pt} $
\begin{array}{cccccccc} \shift_0&=&1&5&3&4&6&2\\
\shift_1&=&5&9&7&8&10&6\\ \shift_2&=&9&13&11&12&14&10
\end{array} $ }; \node (B) at ($(A.east)+(50mm,0)$) {
\setlength{\arraycolsep}{1pt} $
\begin{array}{cccccccc} \shift_0&=&1&2&5&6&3&4\\
\shift_1&=&5&6&9&10&7&8\\ \shift_2&=&9&10&13&14&11&12
\end{array} $ }; \node (C) at ($(B.east)+(75mm,0)$) {
\setlength{\arraycolsep}{1.7pt} $
\begin{array}{ccccccccccc} \shift_0&=&1&7&4&8&5&2&3&9&6\\
\shift_1&=&7&13&10&14&11&8&9&15&12\\
\shift_2&=&13&19&16&20&17&14&15&21&18
\end{array} $ };
\end{tikzpicture}
\caption{\label{fig-125634-163425-167359248} Three non-isomorphic
pattern posets---for (left to right) 163425 and 125634 with overlap~2,
and 167359248 with overlap~3---all with the same enumeration for each
number of occurrences.  An inductive argument for each of these,
analyzing the effect of adding an occurrence, gives the common product
$\prod_{i=0}^{j-1}\binom{4i+2}{2}$, see
Propositions~\ref{prop-gen-prod-formula-trees}
and~\ref{prop-prod-formula-trees}.}
\end{figure}

\subsection{Shifted grid posets}

\enlargethispage{2ex}

There is a standard partial order on the integer lattice $\ZZ^2$ where
$(x,y)$ covers both $(x-1,y)$ and $(x,y-1)$ for all $x$ and $y$.  Its
Hasse diagram can be obtained by rotating this lattice $45$ degrees
counterclockwise.  We say that the Hasse diagram $\clh$ of a poset
$\clp$ can be embedded in $\ZZ^2$ if $\clh$ can be obtained from a
connected subset of the Hasse diagram of $\ZZ^2$ by adding vertices to
some of its edges.  An example is given in Figure~\ref{fig-146378259}.

We now define a particular class of posets that can be embedded in
$\ZZ^2$ and coincides with a certain class of pattern posets.

\begin{definition}\label{def-shifted-grid} Fix $m\le K/2$, $J\ge1$
  and nonnegative integers $h_1,\ldots,h_m,v_1,\ldots,v_{m-1}$.  The
  \emph{shifted grid poset of width $J$ with parameters
    $(h_1,\ldots,h_m:v_1,\ldots,v_{m-1})$} has as Hasse diagram the horizontal
  lattice segments in the integer lattice $\ZZ^2$ from $(r,r)$ to $(r+J,r)$,
  $1\le r\le m$, and the vertical lattice edges between them, directed
  rightwards and upwards, with $h_r$ vertices inserted in every horizontal unit
  edge at height $r$, and $v_r$ vertices in every vertical unit edge from height
  $r$ to $r+1$.
\end{definition}

An example of a \sgp\ and a pattern of the type mentioned in the
following theorem is given in Figure~\ref{fig-146378259}. The
corresponding parameters are $h_1=h_2=0,h_3=2$ and $v_1=1,v_2=0$.

\begin{figure}[H] \setlength{\abovecaptionskip}{-29ex} \hspace*{0.5em}

\begin{tikzpicture}[scale=2.5, line width=1.5pt]
\def\vertex#1,#2,#3,{\draw [blue, fill=white] (#1,#2) circle
  [radius=0.1] node [black] {$#3$};}
\begin{scope}[xshift=-9.3em, yshift=-3ex]
  \draw [blue] (0,0)--(1,0);
  \draw [red] (1,0)--(2,0);
  \draw [green] (2,0)--(3,0);
  \draw [blue]   (1,1)--(2,1);
  \draw [red] (2,1)--(3,1);
  \draw [green] (3,1)--(4,1);

  \draw [blue] (2,2)--(3,2);
  \draw [red] (3,2)--(4,2);
  \draw [green]  (4,2)--(5,2);
  \draw [blue] (1,0)--(1,1);
  \draw [red] (2,0)--(2,1);

  \draw [blue] (2,1)--(2,2);
  \draw [green] (3,0)--(3,1);
  \draw [red]   (3,1)--(3,2);
  \draw [green] (4,1)--(4,2);
  \vertex0,0,1,
  \vertex1,0,7,

  \vertex2,0,13,
  \vertex3,0,19,
  \vertex2,2,3,
  \vertex7/3,2,5,

  \vertex8/3,2,6,
  \vertex3,2,9,
  \vertex10/3,2,11,
  \vertex11/3,2,12,

  \vertex4,2,15,
  \vertex13/3,2,17,
  \vertex14/3,2,18,
  \vertex5,2,21,

  \vertex1,1,2,
  \vertex2,1,8,
  \vertex3,1,14,
  \vertex4,1,20,

  \vertex1,1/2,4,
  \vertex2,1/2,10,
  \vertex3,1/2,16,
\end{scope}
\begin{scope}
  \draw node[xshift=-13em, yshift=-2ex] { $
\begin{array}{ccccccccccc}
  \shift_0&=&1&7&4&2&8&3&5&6&9\\
  \shift_1&=&7&13&10&8&14&9&11&12&15\\
  \shift_2&=&13&19&16&14&20&15&17&18&21\\[75ex]
\end{array} $ };
\end{scope}
\end{tikzpicture}
\caption{\label{fig-146378259} The {\sgp} with parameters
$(0,0,2:1,0)$ for three occurrences of the pattern $\pi=146378259$
(whose inverse is 174283569) with overlap $3$ and its defining
stack. Blue edges correspond to $\shift_0$, red edges to $\shift_1$
and green edges to $\shift_2$.  Shown embedded in the $\ZZ^2$-grid
with 1 at $(1,1)$, 21 at $(6,3)$. Rotating this picture $45$ degrees
counterclockwise gives the Hasse diagram of $\pp{3}(3)$.  }
\end{figure}

\begin{lemma}\label{lemma-homology} Let $\pi$ be a $K$-pattern, $m\le
K/2$ and let $\bdp$ and $\bds$ be the interiors of the gap vectors of
its $m$-prefix $\bp$ and $m$-suffix $\bs$.  The number of simple
cycles in $\ppm(2)$ equals the number of indices $i$ such that
$\bdp_i$ and $\bds_i$ are both positive.
\end{lemma}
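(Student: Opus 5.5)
The plan is to read off the Hasse diagram of $\ppm(2)$ directly from its $2$-stack $\shift_0,\shift_1$. By Lemma~\ref{lemma-same-order}, the two lines share exactly $m$ elements, and these appear in the same order on both lines. Call them $c_1,\dots,c_m$ in that order. They sit in places $\bs_1<\dots<\bs_m$ of $\shift_0$ and in places $\bp_1<\dots<\bp_m$ of $\shift_1$. As in the proof of Theorem~\ref{thm-diff-vectors}, for $1\le i<m$ the segment of $\shift_0$ strictly between $c_i$ and $c_{i+1}$ has $\bds_{i+1}$ elements, and the corresponding segment of $\shift_1$ has $\bdp_{i+1}$ elements. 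These are exactly the interior coordinates of the gap vectors. Every remaining element lies on only one line: it is either before $c_1$ or after $c_m$ on some line, or inside one of these segments.

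First step: determine all comparabilities. The order is the transitive closure of the two line orders, so any chain of comparisons that switches lines must switch at some $c_k$. Take $x$ in the $\shift_0$-segment and $y$ in the $\shift_1$-segment between $c_i$ and $c_{i+1}$. A comparison $x<y$ would need some $c_k$ with $x<c_k<y$, so $c_i<c_k<c_{i+1}$ in the poset. This contradicts the fact that no shared element lies between $c_i$ and $c_{i+1}$. Hence $x$ and $y$ are incomparable. Elements on the same line are comparable, and the rest follows from transitivity through the $c_k$.

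Second step: describe the covers, which by the remark preceding Lemma~\ref{lemma-cycle-in-core} must be consecutive pairs on some line.
\begin{itemize}
\item Every consecutive pair on a line is a cover, except possibly the pair $(c_i,c_{i+1})$ when it is consecutive on one line while the other line has elements between them. In that case it is not a cover.
\item If both segments are empty, the edge $c_i c_{i+1}$ is present and counted once.
\item If exactly one segment is empty, the only path from $c_i$ to $c_{i+1}$ is along the nonempty segment.
\item If both segments are nonempty, there are two internally disjoint saturated chains from $c_i$ to $c_{i+1}$.
\end{itemize}
The parts of the lines before $c_1$ and after $c_m$ are pendant paths attached at $c_1$ and $c_m$, which is consistent with Theorem~\ref{thm-branch-is-chain}.

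Third step: count simple cycles. The diagram is a chain of blocks glued along the vertices $c_1,\dots,c_m$, with pendant paths attached at the ends. Each $c_i$ with $1<i<m$ is a cut vertex separating the part below it from the part above it. A simple cycle therefore lies inside a single block $[c_i,c_{i+1}]$.
\begin{itemize}
\item If at most one of $\bds_{i+1},\bdp_{i+1}$ is positive, the block is a path and contains no cycle.
\item If both are positive, the block is exactly two internally disjoint paths between the same endpoints, so it contains exactly one simple cycle.
\end{itemize}
Summing over $i$ gives the claim, with the index shift between segment $i$ and the interior coordinate $i+1$ handled by the paper's labelling of interiors.

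The main obstacle is the second step. I must verify carefully that no cover edges arise other than those listed, in particular no edge that jumps across a block and would merge blocks. The incomparability argument of the first step, together with the rule that covers are consecutive on a line, should settle this.
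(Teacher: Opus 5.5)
Your proposal is correct and follows essentially the same route as the paper. In both, the shared letters $c_1,\dots,c_m$ are cut vertices of the Hasse diagram of $\ppm(2)$, so every simple cycle lies between some $c_i$ and $c_{i+1}$, and a cycle there exists, and is unique, exactly when both lines have elements strictly between $c_i$ and $c_{i+1}$.

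Your explicit incomparability and cover analysis gives the sufficiency direction and the one-cycle-per-block count more fully than the paper, which argues mainly necessity. The obstacle you flag is immediate, because covers are consecutive pairs on a line. The only step still to write out is that the non-exceptional consecutive pairs really are covers. This follows from the same routing-through-a-shared-letter argument as your first step.
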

\begin{proof} A cycle in $\ppm(2)$ must have elements from both the
first and second lines of its stack, since each line forms a chain in
$\ppm(2)$.  The minimal and maximal elements of such a cycle must
belong to both lines, and thus can only be the letters
$K-m+1,K-m+2,\ldots,K$.

By symmetry we may assume that $\bp$ and $\bs$ are increasing, so
these common letters appear in increasing order on both lines, call
them $c_1<c_2<\cdots<c_m$.  Each common letter separates the preceding
gaps from the following ones, so a simple cycle cannot extend across
an intermediate common letter.

In order for there to be a simple cycle with two of these elements, $c_i$ and
$c_{i+1}$, as minimum and maximum, there must be, on each of the two lines, a
nonempty set of elements strictly between $c_i$ and~$c_{i+1}$.  On the first
line, the places of $c_i$ and $c_{i+1}$ are determined by $\bs_i$ and
$\bs_{i+1}$, which must differ by at least 2 in order for there to be some
elements strictly between $c_i$ and $c_{i+1}$ on the first line, which is
equivalent to $\bds_i>0$.  Conversely, if $\bds_i=0$ then $c_i$ and $c_{i+1}$
are adjacent on the first line, so there are no letters between them that can
form one of the two chains of a cycle in $\ppm(2)$.

On the second line, the places of $c_i$ are determined by $\bp$, and an
analogous argument shows that $\bdp_i$ must be positive in order for there to be
elements strictly between $c_i$ and~$c_{i+1}$ that contribute to a cycle in
$\ppm(2)$.
\end{proof}
\begin{remark} The number of indices $i$ such that both $\bdp_i$ and
$\bds_i$ are positive is the \emph{first Betti number} of the graph of
$\ppm(2)$ seen as a simplicial complex, that is, equals the rank of
its first homology group.
\end{remark}

\begin{lemma}\label{lemma-1-K-sgp}
  Let $1<m\le K/2$. A $K$-pattern $\pi$ whose poset $\ppmj$ is isomorphic, for
  every $j$, to an $m$-level shifted grid poset of width $j$ with fixed
  parameters must have 1 in its prefix and $K$ in its suffix, or vice versa.
\end{lemma}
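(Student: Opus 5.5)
We will argue via minimal and maximal elements. First determine the extremal elements of a shifted grid poset of width $J$ with parameters $(h_1,\ldots,h_m:v_1,\ldots,v_{m-1})$. Every lattice vertex $(x,r)$ with $r>1$ covers the element just below it on the vertical edge from height $r-1$, or the lattice vertex $(x,r-1)$ itself if $v_{r-1}=0$, whenever that vertical edge exists. Every other vertex except $(1,1)$ covers something to its left. Hence $(1,1)$ is the unique minimal element. Symmetrically, $(m+J,m)$ is the unique maximal element. Here we use that the rows are shifted, so that the left endpoint $(r,r)$ of row $r>1$ sits above $(r,r-1)$, which lies on row $r-1$ since $r\le r-1+J$. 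Likewise the right endpoint $(r+J,r)$ of row $r<m$ lies below $(r+J,r+1)$ on row $r+1$. We check these vertical edges carefully against Definition~\ref{def-shifted-grid}; the shift is exactly what guarantees them. The inserted vertices on edges are never extremal, since each lies in the interior of a chain.

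Since $\ppmj$ is isomorphic to such a poset for every $j$, take any $j\ge2$. Then $\ppmj$ has a unique minimal and a unique maximal element. By the final statement of Proposition~\ref{prop-min-max-elts}, $1$ belongs to $\bp$ or $\bs$, and $K$ belongs to $\bp$ or $\bs$.

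It remains to exclude $1$ and $K$ lying in the same one of $\bp$, $\bs$. Suppose both lie in $\bp$; the case of $\bs$ follows by reversal (Remark~\ref{remark-symm-posets}). By parts 1 and 3 of Proposition~\ref{prop-min-max-elts}, the bottom and top are then the first and last letters of $\shift_0$, so the whole poset lies on one line's chain from bottom to top. That line has $K$ elements, and every element of $\ppmj$ lies in the interval $[\min,\max]$.

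The aim is to derive a contradiction with the grid structure, where a maximal chain has length $m+J-1$ lattice vertices plus the inserted vertices. We compare how the length of a longest chain grows in $j$. In the grid, passing from width $J$ to $J+1$ adds exactly $1+h_m$ (or, more generally, a fixed amount) to the length of a longest chain. In the stack, if both extremes lie on $\shift_0$, then any chain from bottom to top must pass through $\shift_0$'s first and last letters. More usefully, the top, namely the last letter of $\shift_0$, has at most $m+\bdp_{m+1}$ predecessors on later lines. Because $K$ is in $\bp$, the last letter of $\shift_0$ is at most $m$. Elements of $\shift_{j-1}$ for large $j$ have labels exceeding $K$ and are therefore comparable to the top only through shared letters. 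One then has to show that the longest chain length stays bounded independently of $j$, since it is contained in $\shift_0$ plus the down-sets reachable. That contradicts linear growth in the grid.

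The main obstacle is making this last bound rigorous. My first attempt would be to show directly that the first and last letters of $\shift_0$ cannot be comparable to all elements of $\shift_{j-1}$ once $j\ge3$, contradicting Theorem~\ref{thm-branch-is-chain}(6). If that fails, I would instead use $m>1$ with Lemma~\ref{lemma-homology} to compare cycle counts.
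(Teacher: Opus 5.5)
Your first two steps are correct and match the paper: a shifted grid poset has a unique minimal and a unique maximal element, so Proposition~\ref{prop-min-max-elts} puts $1$ and $K$ in $\bp\cup\bs$. The gap is in ruling out $1,K\in\bp$.

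\textbf{The longest chain is not bounded.} Your plan needs the longest chain of $\ppmj$ to stay bounded in $j$, and that is false even under all the constraints available here. Take $\pi=163425$ with $m=2$ (Figure~\ref{fig-125634-163425-167359248}). Both $1$ and $6$ lie in the prefix, and the interiors of both gap vectors are positive ($4$ and $2$). Yet the following is a chain with $2j+4$ elements:
\begin{itemize}
\item first $1<5<9<\cdots<4j-3$;
\item then $4j+1<4j-1<4j<4j+2$ on $\shift_{j-1}$;
\item then the last letters $4j-2<4j-6<\cdots<6<2$.
\end{itemize}
Separately, the sentence claiming the whole poset lies on one line's chain is wrong. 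You only know that every element lies in $[\min,\max]$.

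\textbf{Your fallbacks do not close the gap.} The first and last letters of $\shift_0$ are then the minimum and maximum, so they are comparable to everything, and part~6 of Theorem~\ref{thm-branch-is-chain} cannot be contradicted that way. Nor do cycle counts help: when both gap-vector interiors are positive, Lemma~\ref{lemma-homology} gives $m-1$ simple cycles in $\ppm(2)$ whether or not $1$ and $K$ share the prefix, so cycle counting alone cannot separate the cases.

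\textbf{The missing idea is to use the \emph{shortest} saturated bottom-to-top chain instead of the longest chain.}
\begin{itemize}
\item The width-$2$ grid has $m-1$ simple cycles, so Lemma~\ref{lemma-homology} forces the interior of $\bds$ to be positive. Hence no two letters of $\shift_0$ that are shared with $\shift_1$ are adjacent on $\shift_0$.
\item Since $m\le K/2$, every other letter of $\shift_0$ lies on no other line.
\item If $u$ immediately precedes $v$ on $\shift_0$ and one of them occurs only on $\shift_0$, then $u<v$ is a cover relation. Any comparison leaving $u$ upward, or entering $v$ from below, must go through that adjacency.
\end{itemize}
So when $1,K\in\bp$, $\shift_0$ is a saturated chain from the bottom to the top element, with exactly $K$ elements for every $j$. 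In a shifted grid poset of width $j$, however, every saturated chain from $(1,1)$ to $(m+j,m)$ passes through at least $2m+j-1$ lattice points, which exceeds $K$ once $j>K-2m+1$. This is how the paper concludes.
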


\begin{proof}
  Note first that a shifted grid poset of width 2 has precisely $m-1$ simple
  cycles, the maximum possible for overlap $m$, the $i$th cycle having bottom
  and top vertices at $(i+1,i)$ and $(i+2,i+1)$, respectively, for $1\le i<m$.
  Thus, by Lemma~\ref{lemma-homology}, the interiors of the gap vectors of both
  $m$-prefix and $m$-suffix of $\pi$ must be positive.
  
  A \sgp\ has a unique minimal and a unique maximal element, so, by
Proposition~\ref{prop-min-max-elts}, a $K$-pattern $\pi$ with such a
poset must have 1 and $K$ in its $m$-prefix or $m$-suffix. Suppose 1
and $K$ both belong to the $m$-prefix (the case for the suffix is
analogous).  Then, by Proposition~\ref{prop-min-max-elts}, the bottom
element of $\ppmj$, for any $j$, is the first letter of $\shift_0$ and
the top letter is the last letter of $\shift_0$.  We show that, for
any $j$, there is a path---a saturated chain---from bottom to top
element consisting precisely of the letters of $\shift_0$, in that
order.

The positivity of the interior of the gap vector of the suffix implies
that no two letters shared with $\shift_1$ are consecutive in
$\shift_0$. Thus every consecutive pair in $\shift_0$ contains a
letter occurring on no other line. Such a pair forms a cover relation,
since any comparison involving that letter must pass through its
predecessor or successor in $\shift_0$.  Hence $\shift_0$ is saturated
for every $j$.

In a \sgp\ the length of the shortest path from bottom to top element
clearly increases with the number of occurrences $j$.

Thus, 1 and $K$ cannot both belong to the prefix or both to the
suffix, so the prefix and suffix must each contain one of 1 and $K$.
\end{proof}

\begin{theorem}\label{thm-shifted-grid}
  Let $1<m\le K/2$ and let $\pi$ be a $K$-pattern whose $m$-prefix
  $\bp=p_1p_2\ldots p_m$ and $m$-suffix $\bs=s_1s_2\ldots s_m$ satisfy
\begin{equation}
  \label{eq-interleave} 1=p_1<s_1<p_2<s_2<\cdots<p_m<s_m=K.
\end{equation} Then $\ppm(j)$ is a \sgp\ of width $j$ with parameters
 $$
 (h_1,h_2,\ldots,h_m : v_1, v_2,\ldots,v_{m-1}),
 $$
 where
 \begin{equation}\label{eq-recover}
   h_i=s_i-p_i-1 \text{ and } v_i=p_{i+1}-s_i-1.
 \end{equation}
 Conversely, given a sequence of {\sgps} $\clp_j$ of width $j$ with the above
 parameters, let $K$ be the sum of the $h_i$ and $v_i$ plus $2m$, and define
 $\bp$ and $\bs$ recursively by~\eqref{eq-recover}, starting with $p_1=1$. Then
 $s_m=K$.  Let $\pi=\bp\bw\bs$, where $\bw$ is any permutation of the letters of
 $[K]$ not in $\bp$ or $\bs$. Then, up to the trivial symmetries in
 Section~\ref{sec-symmetries}, $\pi$ is the only $K$-pattern with $\ppm(j)$
 isomorphic to $\clp_j$ for every $j$.
\end{theorem}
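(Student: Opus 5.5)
The plan is to work directly with the stack of Definition~\ref{def-stack} and identify every element with a point of the shifted grid. Write $d=K-m$ and $\tau=\pi^{-1}$. Since $m\le K/2$, Lemma~\ref{lemma-only-two-lines} tells us each label lies on at most two lines, and only on adjacent ones. The labels shared by $\shift_i$ and $\shift_{i+1}$ are $c^{(i)}_r:=K-m+r+id$ for $1\le r\le m$. In $\shift_i$ they sit at the places dictated by the suffix; in $\shift_{i+1}$ they sit at the places dictated by the prefix.

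By \eqref{eq-interleave}, line $\shift_i$ reads as follows, as a word in the values of $\pi$ in increasing order. It begins with the prefix letter $p_1$, then $h_1$ interfix values, then $s_1$, then $v_1$ interfix values, then $p_2$, then $h_2$ values, and so on, ending with $s_m=K$. All of these values are shifted by $id$.

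I will map the label of $\shift_i$ in place $p_r$ to the grid point $(i+r,r)$ and the label in place $s_r$ to $(i+r+1,r)$. Consistency is the first check. The element in place $s_r$ of $\shift_i$ equals the element in place $p_r$ of $\shift_{i+1}$, and both are assigned $(i+r+1,r)$. The $h_r$ letters between $p_r$ and $s_r$ are placed on the horizontal unit edge at height $r$. The $v_r$ letters between $s_r$ and $p_{r+1}$ are placed on the vertical edge from $(i+r+1,r)$ to $(i+r+1,r+1)$. With $J=j$, this gives a bijection between the labels of the $j$-stack and the vertex set of the shifted grid poset of width $j$ with parameters \eqref{eq-recover}. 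The two extreme cases also match: height-$1$ points $(1,1),\dots,(j+1,1)$ come from $p_1$ and $s_1$, and at height $m$ the last point comes from $s_m$ of $\shift_{j-1}$.

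Next I show that the relations agree. Each line $\shift_i$ traces a monotone lattice path. It starts at $(i+1,1)$, alternates horizontal edges (prefix to suffix) with vertical edges (suffix to next prefix), and ends at $(i+m+1,m)$. Every unit edge of the grid lies on exactly one such path. Hence the order generated by the lines is the transitive closure of the grid edges, i.e.\ exactly the shifted grid order.

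It remains to check that no grid edge collapses, i.e.\ that each consecutive pair on a line is really a cover. This is the main obstacle. I would argue via comparability in $\ZZ^2$. A relation $u<w$ in the generated order forces $w$ to be weakly northeast of $u$ in the grid. An intermediate $z$ strictly between two consecutive elements of the same path would have to lie in the lattice rectangle they span. That rectangle is a single unit edge, so $z$ would be one of the subdivision vertices of that edge, which lie only on that one line. The remaining care is at the shared corner points, and there one uses Theorem~\ref{thm-cover-2} together with Lemma~\ref{lemma-same-order}.

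For the converse, \eqref{eq-recover} recovers $p$ and $s$ uniquely from $p_1=1$, with $s_m=1+\sum h_i+\sum v_i+2m-1=K$. By Lemma~\ref{lemma-symmetry-prefix} and Lemma~\ref{lem-overlap-pref-suff}, the choice of $\bw$ and of the common ordering of prefix and suffix is immaterial up to symmetry.

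For uniqueness, suppose $\ppm(j)$ is a shifted grid poset for all $j$. By Lemma~\ref{lemma-1-K-sgp}, $1$ lies in one of the prefix and suffix and $K$ in the other; normalize so that $1\in\bp$. Width $2$ gives $m-1$ cycles, so by Lemma~\ref{lemma-homology} every interior gap coordinate of both $\bdp$ and $\bds$ is positive. Together with the convention $\bp_1<\bs_1$ and with $1\in\bp$, $K\in\bs$, this forces strict interleaving: between any two consecutive $p$'s there is an $s$, and vice versa. That is exactly \eqref{eq-interleave}. The parameters, read off as horizontal and vertical edge subdivisions in the first part, then determine $h_i$ and $v_i$, hence $\bp$ and $\bs$.
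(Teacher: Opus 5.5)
Your first part is correct and follows the paper's route: place $\shift_0$ along the leftmost zig-zag, check that the letters shared by $\shift_i$ and $\shift_{i+1}$ land on the same lattice points, and note that each unit edge is traversed by exactly one line. Your rectangle argument already settles the cover check at the corner points, so the appeal to Theorem~\ref{thm-cover-2} and Lemma~\ref{lemma-same-order} there is unnecessary.

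The uniqueness part has a genuine gap. Positivity of the interior gap coordinates only says that consecutive prefix values, and consecutive suffix values, are not consecutive integers. The values between them may all be interfix values, so interleaving does not follow. For example, $K=8$, $m=2$, $\bp=13$, $\bs=58$ satisfies $1\in\bp$, $K\in\bs$ and $\bp_1<\bs_1$, and its interior gap coordinates are $1$ and $2$, yet $p_2<s_1$.

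No argument that uses only $j=2$ can close this gap when $m\ge3$. Take $K=12$, $m=3$. The interleaved pair $\bp=1\,6\,8$, $\bs=5\,7\,12$ and the non-interleaved pair $\bp=1\,3\,8$, $\bs=5\,10\,12$ give the same shape at $j=2$: a tail of $4$ elements, two cycles each with sides of $1$ and $4$ interior elements, and a head of $4$ elements. So their posets for $j=2$ are isomorphic.

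The same issue affects your final sentence. For $K=10$, $m=3$, the prefixes $1\,5\,8$ and $1\,4\,8$ with suffixes $3\,6\,10$ and $3\,7\,10$ are both interleaved. Their parameters are $(1,0,1:1,1)$ and $(1,2,1:0,0)$, yet their posets are isomorphic for $j=2$. So ``reading off'' the parameters from the isomorphism type is not automatic either.

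The missing ingredient is the paper's use of $j=3$. Take a competing pattern with increasing prefix $\bp'$ and suffix $\bs'$.
\begin{enumerate}
\item Positivity of the interior of $\bds'$ makes the first line of its stack a saturated chain from the bottom element.
\item That chain ends at a vertex covering two elements and covered by one. In the width-$3$ grid, such a vertex lies on the top row.
\item Such a chain must pass through a vertex of degree greater than two on each of the $m$ rows. Its only candidates are its $m$ shared letters, so it must follow the leftmost zig-zag.
\item This yields $s'_i=s_i$. Matching the cycles of the posets for $j=2$ then gives $p'_{i+1}-p'_i=p_{i+1}-p_i$, hence $\bp'=\bp$.
\end{enumerate}
Interleaving of the competitor is then a consequence of the argument, not an input to it.
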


\begin{proof} The case $j=1$ is trivial, so assume $j\ge2$.  The first
occurrence of $\pi$ corresponds to a chain in $\ppm(j)$ whose order is
determined by $\shift_0$.  The elements where the chains determined by
$\shift_0$ and $\shift_1$ meet are $K-m+1,K-m+2,\ldots,K$, as these
are precisely the letters shared by $\shift_0$ and $\shift_1$.

  Because of the interleaving conditions in~\eqref{eq-interleave}
above, the letters $1,2,\ldots,m$ and $K-m+1,K-m+2,\ldots,K$ appear in
alternating order in $\shift_0$ (which is the inverse of $\pi$):
$$
1\prec (K-m+1)\prec 2\prec (K-m+2)\prec \cdots \prec (K-1) \prec
m\prec K,
$$
where $a\prec b$ indicates that $a$ precedes $b$.

Embed the letters $1,2,\ldots,m$ and $K-m+1,K-m+2,\ldots,K$ in the
$\ZZ^2$-grid so that $i$ is placed at $(i,i)$ and $K-m+i$ at
$(i+1,i)$, so that the chain $1, (K-m+1), 2, (K-m+2), \cdots, (K-1),
m, K$ forms a zig-zag path starting at $(1,1)$ and alternatingly turns
up and right after each unit step.

Between the letters $i$ and $K-m+i$ in $\shift_0$ there may be some
letters.  The number of such letters is $s_i-p_i-1$ and the number of
letters between $K-m+i$ and $i+1$ is $p_{i+1}-s_i-1$.  The set of
these letters between $i$ and $K-m+i$ in $\shift_0$ we place, in the
order determined by $\shift_0$, on the horizontal line segment between
the places of $i$ and $K-m+i$ in the $\ZZ^2$-lattice, and we likewise
place the letters between $K-m+i$ and $i+1$ in $\shift_0$ on the
vertical line segment between $K-m+i$ and $i+1$.

We now construct the analogous zig-zag path in the $\ZZ^2$-grid for
the letters in $\shift_1$, but starting from $(2,1)$.  Note that the
first letter in $\shift_1$ is $K-m+1$, which was placed at $(2,1)$
when we placed the letters of $\shift_0$.  Similarly, the letter in
place $p_2$ of $\shift_1$ is $K-m+2$, which we place at $(3,2)$, as we
did when placing the letters of $\shift_0$, and this holds in general,
so the letters in the zig-zag chain for $\shift_1$ that coincide with
letters in $\shift_0$ are placed at the same points in $\ZZ^2$ as
their counterparts in $\shift_0$.

Now, $\shift_1$ is order isomorphic to $\shift_0$, and the path in
$\ZZ^2$ containing the letters of $\shift_1$ is identical to the path
for $\shift_0$ shifted one unit to the right.  Thus horizontal
segments at the same height contain the same number of interior
elements, and likewise for vertical segments connecting the same two
horizontal rows.  Repeating this construction for the remaining lines
gives the required shifted grid of width $j$.

For the converse it may be helpful to consider the \sgp\ in
Figure~\ref{fig-146378259}, where $(K-m+1,K-m+2,\ldots,K)$, the letters
belonging to both $\shift_0$ and $\shift_1$, are $(7,8,9)$.

Summing the defining identities in~\eqref{eq-recover} gives that the sum of the
$h_i$ and $v_i$ equals $s_m-2m,$ since $p_1=1$. Thus $s_m=K$ by the definition
of $K$.

Let $\tau$ be a pattern whose poset $\ptnj$ is isomorphic to $\clp_j$
for every $j$, and let $\pi$ be the pattern defined in the statement
of the theorem. Note that for $\ptnj$ and $\ppmj$ to have the same
number of elements for all $j$ (indeed for just $j=1,2$), we must have
$m=n$ and $\pi$ and $\tau$ must have the same length $K$.

Let $\bp'=p'_1p'_2,\ldots p'_m$ and $\bs'=s'_1s'_2,\ldots, s'_m$ be
the $m$-prefix and $m$-suffix, respectively, of~$\tau$, both of which
we assume to be increasing, without loss of generality.  We will show
that $\bp'=\bp$ and $\bs'=\bs$, which will complete the proof. We may
assume, by Lemma~\ref{lemma-1-K-sgp} and symmetry, that $p'_1=1$ and
$s'_m=K$ (by definition we also have $\pi=1\ldots K$).  Let $\clh$ be
the embedding of the Hasse diagram of $\ptmj$ in $\ZZ^2$, identical to
the embedding of the isomorphic \sgp\ $\ppmj$.

Define the \emph{tail} of a \sgp\ to be the chain of elements to the
left of the element at $(2,1)$ in $\clh$. (The head is defined
analogously, from the top element.)  The Hasse diagram $\clh$ of a
\sgp\ $\ppm(2)$ consists, apart from tail and head, of $m-1$ cycles,
one for each $a$ with $1\le a\le m-1$, containing the elements $K-m+a$
and $K-m+a+1$ together with two disjoint chains between them.  This is
a consequence of the interiors of the gap vectors of both the
$m$-prefix and $m$-suffix being strictly positive.  That positivity of
gap vectors is also necessary in order for the Hasse diagram of the
poset $\ptm(2)$ of any pattern $\tau$ to consist of $m-1$ such cycles,
by Lemma~\ref{lemma-homology}.

Now consider $\ptm(3)$. The positivity of the interior of the gap
vector of $\bs'$ implies that each pair of adjacent letters on the
first line of the stack for $\tau$ contains a letter occurring on no
other line. Hence the first line is a saturated chain $C$, starting at
the bottom element.  Only the $m$ letters $C$ shares with the second
line can have degree greater than two, since every other letter occurs
only on the first line.  The last letter of $C$ covers two letters in
the topmost cycle of $\ptm(2)$. These remain covers when the third
line is added, since that line does not contain the last letter. It is
covered by just one letter, since the first line ends there and only
the second line continues above it.

In $\ppm(3)$, a vertex covering two elements and being covered by one
vertex lies on the top horizontal row.  A saturated chain from the
bottom to such a vertex must pass through at least one vertex of
degree greater than two on each of the $m$ rows. Our chain $C$ can
contain at most $m$ such vertices. Starting from the bottom, it must
therefore turn upwards at the first such vertex on each row:
continuing to the right would encounter a second one on that row. On
the top row it must stop at the first such vertex. Thus the first line
follows precisely the leftmost zig-zag in the grid.

The $m$ letters of $\shift_0$ that are shared with $\shift_1$
consequently occupy the vertices $(i+1,i)$, for $1\le i\le m$. The
\(i\)th shared letter is in place \(s'_i\) on the first line. Its
corresponding vertex is in place \(s_i\) along the first zig-zag, so
\(s'_i=s_i\) for every \(i\).

Return now to $\ppm(2)$ and number the $m-1$ cycles from bottom to top
in each of the two posets.  An isomorphism preserves this order, so it
matches their \(i\)th cycles. Between the $i$th and $(i+1)$st shared
letters, the two sides of the $i$-th cycle have $s'_{i+1}-s'_i-1$ and
$p'_{i+1}-p'_i-1$ interior elements, respectively. The corresponding
numbers for $\pi$ are $s_{i+1}-s_i-1$ and $p_{i+1}-p_i-1$. Since the
suffix values already agree, equality of the total numbers of interior
elements in the matched cycles gives
$p'_{i+1}-p'_i=p_{i+1}-p_i$. Together with $p'_1=p_1=1$, this gives
$\bp'=\bp$, completing the proof.
\end{proof}

\begin{remark}\label{rem-uniq-sgp-patt} Although there is (up to
trivial symmetries) a unique pattern $\pi$ for each \sgp\ with a given
set of parameters, there are typically more patterns with the same
enumeration because their posets are 'essentially' isomorphic to the
\sgp\ for $\pi$. As an example, the pattern $\pi=1364257$ with overlap
3 has the same enumeration as the pattern 147962358 with overlap 4.
The poset $\ppmj$ for the former is a shifted grid poset and the poset
for the latter is obtained from $\ppmj$ by attaching a chain of length
$j+1$ to the bottom element of $\ppmj$, which doesn't affect the
number of linear extensions, because patterns corresponding to shifted
grid posets have 1 in prefix and $K$ in suffix, and their posets
therefore have no branches, by Porism~\ref{porism-branches}, so the
elements of this attached chain are comparable to all the elements of
the poset and must therefore be a prefix of any linear extension.
\end{remark}

There is an example of a shifted grid poset, for the pattern 513624
with overlap 3 in~\cite[Fig. 1]{Dwyer-Elizalde-2018}. This pattern is
equivalent (by sorting prefix and suffix) to 135246, and the poset has
all parameters 0.
\begin{corollary}\label{coro-2-grid-identity} Let
$(h_1,h_2,\ldots,h_m:v_1,v_2,\ldots,v_{m-1})$ be the parameters of a
\sgp\ whose corresponding pattern $\pi$ has length $K$.  Then, setting
$v_m=0$, we have
\begin{equation}\label{eq-2-grid-identity} \sum_{i=1}^m(h_i+v_i)=K-2m.
\end{equation}
\end{corollary}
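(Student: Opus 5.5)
The identity follows from the converse part of Theorem~\ref{thm-shifted-grid} by a telescoping sum. By Theorem~\ref{thm-shifted-grid} and Lemma~\ref{lemma-1-K-sgp}, we may assume (up to the trivial symmetries of Section~\ref{sec-symmetries}, which preserve $K$, $m$ and the poset up to isomorphism) that the increasing $m$-prefix $\bp=p_1\ldots p_m$ and $m$-suffix $\bs=s_1\ldots s_m$ of $\pi$ satisfy the interleaving condition~\eqref{eq-interleave}, namely $1=p_1<s_1<p_2<\cdots<p_m<s_m=K$. Moreover, the parameters are then given by~\eqref{eq-recover}: $h_i=s_i-p_i-1$ and $v_i=p_{i+1}-s_i-1$ for $i<m$.

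With the convention $v_m=0$, summing gives
$$
\sum_{i=1}^m h_i+\sum_{i=1}^{m-1}v_i=\sum_{i=1}^{m}(s_i-p_i)+\sum_{i=1}^{m-1}(p_{i+1}-s_i)-(2m-1).
$$
The two sums on the right telescope along the chain $p_1<s_1<p_2<\cdots<s_m$ to $s_m-p_1$. Since $s_m-p_1=K-1$, the total is $K-1-(2m-1)=K-2m$, which is~\eqref{eq-2-grid-identity}. The same computation appears in the proof of Theorem~\ref{thm-shifted-grid}, where summing~\eqref{eq-recover} yields $s_m-2m$.

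The one point needing care is that the parameters of the shifted grid poset are intrinsic to the poset, while the pattern $\pi$ is only determined up to symmetry. This is handled by the uniqueness statement in Theorem~\ref{thm-shifted-grid}: any $\pi$ realizing the poset is equivalent to the interleaved representative, and that representative has the same length $K$. Hence the identity holds for $\pi$ itself. An alternative check is to count elements: the shifted grid of width $1$ has $2m+\sum(h_i+v_i)$ vertices, and $\ppm(1)$ has $K$ elements.
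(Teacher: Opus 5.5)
Your proof is correct and is essentially the paper's argument: the paper simply sums the identities in \eqref{eq-recover} and uses $p_1=1$, $s_m=K$, $v_m=0$ to get $s_m-p_1-(2m-1)=K-2m$, which is exactly your telescoping computation. Your closing remark is also a valid, slightly more intrinsic proof that avoids reducing to the interleaved representative: the width-$1$ shifted grid has $2m+\sum_i(h_i+v_i)$ vertices, and it must match the $K$ elements of the chain $\ppm(1)$.
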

\begin{proof}
  Summing the identities in~\eqref{eq-recover}, and using $p_1=1$, $s_m=K$ and
  $v_m=0$, gives
  \[
    \sum_{i=1}^m(h_i+v_i) =s_m-p_1-(2m-1)=K-2m. \qedhere
  \]
\end{proof}
We single out a specific subclass of shifted grid posets, given by two
levels---henceforth termed the \emph{shifted two-level posets} and therefore
parametrized by a triple $(h,v,h')=(h_1,v_1,h_2)$ of non-negative
integers. These posets are straightforward to characterize, as follows.

\begin{corollary}\label{coro-2-grid-patterns} Fix $h_1,v_1,h_2\ge0$
and let $\pi$ be a $K$-pattern whose 2-packing poset for every $j\ge1$
is isomorphic to the two-level shifted grid poset of width $j$ with
parameters $(h_1,h_2:v_1)$. Then, up to trivial symmetries, $\pi_1=1$,
$\pi_2=h_1+v_1+3$, $\pi_{K-1}=h_1+2$, and $\pi_K=K$.
\end{corollary}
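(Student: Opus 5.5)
This is the case $m=2$ of Theorem~\ref{thm-shifted-grid}, so I would derive it from that theorem and then read off the entries of $\pi$. By the converse part of Theorem~\ref{thm-shifted-grid}, up to the trivial symmetries of Section~\ref{sec-symmetries} there is a unique pattern, modulo the arrangement of the interfix, whose poset is the two-level shifted grid poset with parameters $(h_1,h_2:v_1)$ for every $j$. Its $2$-prefix $\bp=p_1p_2$ and $2$-suffix $\bs=s_1s_2$ are obtained from \eqref{eq-recover} with $p_1=1$. Solving successively gives
\[
s_1=p_1+h_1+1=h_1+2,\qquad p_2=s_1+v_1+1=h_1+v_1+3,
\]
\[
s_2=p_2+h_2+1=h_1+v_1+h_2+4.
\]
Corollary~\ref{coro-2-grid-identity} with $m=2$ and $v_2=0$ gives $h_1+v_1+h_2=K-4$, so $s_2=K$. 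This agrees with the interleaving condition \eqref{eq-interleave}: $1=p_1<s_1<p_2<s_2=K$.

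Writing $\pi=\bp\bw\bs$, we get $\pi_1=p_1=1$, $\pi_2=p_2=h_1+v_1+3$, $\pi_{K-1}=s_1=h_1+2$ and $\pi_K=s_2=K$. These are the stated values.

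The one step needing care is the meaning of "up to trivial symmetries". Theorem~\ref{thm-shifted-grid} assumes increasing prefix and suffix, with $1$ in the prefix and $K$ in the suffix, the latter justified by Lemma~\ref{lemma-1-K-sgp}. The uniqueness part shows that any pattern $\tau$ with these posets is brought into this normal form by the symmetries: sorting prefix and suffix by Lemma~\ref{lem-overlap-pref-suff}, reversal or complement by Lemma~\ref{lemma-symmetries}, and swapping prefix and suffix by Lemma~\ref{lem-pref-suff-wilf}. All of these preserve the poset up to isomorphism (Remark~\ref{remark-symm-posets}, Lemma~\ref{lem-dual}). Once in normal form, $\tau$ has the same prefix and suffix as $\pi$, so its four boundary entries are as claimed.

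The interfix is arbitrary and does not affect the statement. I expect no real obstacle beyond confirming that Lemma~\ref{lemma-1-K-sgp} applies, which it does since $m=2>1$ and $2\le K/2$ because $K\ge 4$.
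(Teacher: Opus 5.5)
Your proposal is correct and is essentially the paper's proof. Both use Theorem~\ref{thm-shifted-grid} (with Lemma~\ref{lemma-1-K-sgp}) to normalize to $p_1=1$, $s_2=K$ with increasing prefix and suffix, and then read $\pi_{K-1}=s_1=h_1+2$ and $\pi_2=p_2=h_1+v_1+3$ off \eqref{eq-recover}. One small correction to your side remark: complementation does not preserve the poset up to isomorphism but dualizes it (Lemma~\ref{lem-dual}), and for a two-level shifted grid this interchanges $h_1$ and $h_2$, so complementation cannot be one of the allowed normalizations here --- fortunately it is not needed, since reversal followed by simultaneous sorting of prefix and suffix already reaches the normal form.
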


\begin{proof} By Theorem~\ref{thm-shifted-grid}, up to trivial
symmetries, a pattern of this kind must have $\pi_1=1$ and
$\pi_K=K$. Thus we have
$$
h_1=\pi_{K-1}-2,\qquad v_1=\pi_2-\pi_{K-1}-1.
$$
Therefore $\pi_{K-1}=h_1+2$ and $\pi_2=h_1+v_1+3$.
\end{proof}

A special case of the patterns in
Corollary~\ref{coro-2-grid-patterns}, when $K\ge5$, are those with
prefix $13$ and suffix $2K$, for which $h=v=0,h'=K-4$.  The following
proposition is essentially derived via generating functions
in~\cite[Theorem 4.2]{Elizalde-Noy-2012}, except that there the
interfix is taken to be increasing and the pattern has
$(K+1)(K+2)\ldots(2K-5)$ appended to it, and the overlap is $K-3$.
The posets in question differ only in that the one for the longer
pattern has $K-5$ more elements in its head, and thus they trivially
have the same enumeration of linear extensions.

\begin{proposition}\label{prop-Fuss-Catalan} The number of
permutations that $2$-pack $j$ occurrences of the $K$-pattern with
prefix $13$ and suffix $2K$, where $K\ge4$, is the generalized Catalan
number $\frac{1}{(K-3)j+1}\binom{(K-2)j}{j}$.
\end{proposition}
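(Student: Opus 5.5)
By Corollary~\ref{coro-2-grid-patterns}, or directly by Theorem~\ref{thm-shifted-grid}, the pattern with prefix $13$ and suffix $2K$ has $p_1=1$, $s_1=2$, $p_2=3$ and $s_2=K$. So $\pp{2}(j)$ is the two-level shifted grid poset of width $j$ with parameters $(h_1,h_2:v_1)=(0,K-4:0)$. This is consistent with Corollary~\ref{coro-2-grid-identity}, since $0+0+(K-4)=K-4$. By Theorem~\ref{thm-numoccs-linext} it therefore suffices to count linear extensions of this explicit poset. The plan is to translate linear extensions into lattice paths and count those paths with the cycle lemma.

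\textbf{Step 1: describe the poset.} Concretely, the lower row is a chain $b_0<b_1<\cdots<b_j$ with no inserted vertices. The upper row is a chain $t_1<t_2<\cdots<t_N$ with $N=1+j(K-3)$, whose grid vertices are $g_i:=t_{(i-1)(K-3)+1}$ for $1\le i\le j+1$. There are $K-4$ inserted vertices between consecutive $g_i$. The only cross relations are the vertical edges $b_i<g_i$ for $1\le i\le j$. As a check, the element count is $(j+1)+1+j(K-3)=j(K-2)+2$, which equals $K+(j-1)(K-2)$.

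\textbf{Step 2: reduce to a word condition.} A linear extension is determined by the interleaving word in the letters $b$ and $t$. Since $g_1=t_1$ covers $b_1$, which lies above $b_0$, the extension must begin with $b_0b_1$. After removing this forced start, we have a word with $j-1$ letters $b$ and $N$ letters $t$. The constraint is that for each $i$ with $2\le i\le j$, the $i$th remaining $b$ (that is, $b_{i}$) precedes the $((i-1)(K-3)+1)$st $t$. Reading the word backwards, or equivalently recording for each $b$ how many $t$'s precede it, this becomes a weakly increasing sequence $0\le c_2\le\cdots\le c_j$ with $c_i\le (i-1)(K-3)$. Such sequences are lattice paths staying weakly below a line of slope $K-3$.

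\textbf{Step 3: count the paths.} I will count these paths as Raney/Fuss--Catalan paths. Each $b$ is an up-step of weight $K-3$ and each $t$ a down-step of weight $1$. With the boundary shifts fixed correctly, the admissible words correspond to sequences of $j$ steps $+(K-3)$ and $(K-3)j$ steps $-1$ with all partial sums nonnegative, after adjoining one final step. The cycle lemma (Dvoretzky--Motzkin/Raney) then gives
\[
\frac{1}{(K-2)j+1}\binom{(K-2)j+1}{j}=\frac{1}{(K-3)j+1}\binom{(K-2)j}{j}.
\]
Equivalently, these words are in bijection with $(K-2)$-ary trees with $j$ internal nodes.

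\textbf{Main obstacle.} The main difficulty is the exact bookkeeping at the two ends: the forced prefix $b_0b_1$, the $K-3$ top elements following $g_j$ (the final vertex $g_{j+1}$ and the inserted elements before it), and the off-by-one in ``$b_i$ precedes $g_i$''. These must be matched so that the path count is precisely the Raney number and not a shifted one. I would first verify $j=1,2$ against Table~\ref{table-contain}: $K=4$ should give $C_j$, and $K=5$ should give $1,3,12,\dots$. Then I would fix the padding by appending an artificial final up-step. As a fallback, I would prove that the counts satisfy the Fuss--Catalan functional equation $F=1+xF^{K-2}$ by decomposing the word at the first return, which is exactly where $g_i$ meets the line.
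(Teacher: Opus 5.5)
Your argument is correct, and it takes a genuinely different route from the paper. The paper gives no self-contained proof. It attributes the count to Elizalde and Noy's generating-function computation for a longer pattern with overlap $K-3$, and transfers the result by noting that the two posets differ only by $K-5$ extra head elements, all comparable to everything.

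You instead identify $\mathcal{P}_\pi^2(j)$ as the grid with parameters $(0,K-4:0)$. The right citation for this is Theorem~\ref{thm-shifted-grid}, in its forward direction; Corollary~\ref{coro-2-grid-patterns} is the converse. Your description of the two chains, with cross relations $b_i<g_i$, agrees with the stack.

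The endpoint bookkeeping you worry about closes immediately. Restore $b_1$ as the first letter and send $b\mapsto +(K-3)$, $t\mapsto -1$. The partial sum just before $b_i$ is $(i-1)(K-3)-c_i$, so your constraints say exactly that these partial sums are nonnegative. The $(K-3)j+1$ letters $t$ make the total $-1$, and it is first reached only at the last step, which is necessarily a $t$. Conversely, every sequence of $j$ steps $+(K-3)$ and $(K-3)j+1$ steps $-1$ with nonnegative proper prefix sums begins with an up-step and arises this way. The cycle lemma then gives $\frac{1}{(K-2)j+1}\binom{(K-2)j+1}{j}$ directly, with no artificial padding. One minor slip: $b_i$ is the $(i-1)$st remaining $b$ after deleting $b_0b_1$, not the $i$th.

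Your route gives an elementary, self-contained proof inside the paper's own poset framework. It also yields an explicit bijection with Raney paths and $(K-2)$-ary trees, and shows transparently why the count is independent of the interfix. The paper's citation is shorter, and instead highlights the poset-level equivalence between different pattern--overlap pairs, which is a recurring theme of the paper.
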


When $K=4$ the pattern is 1324, whose poset is the smallest nontrivial
\sgp, and whose enumeration is the Catalan number $C_j$.  That result
was obtained already by Liese and Remmel in~\cite{Liese-Remmel-2010};
see also \cite[Section 4.1]{Elizalde-Noy-2012}.  Interestingly, the
case $K=4$ of Proposition~\ref{prop-Fuss-Catalan}, with overlap 2, has
the same enumeration as the pattern
\mbox{$\pi^{(a)}=1(a+1)2(a+2)\ldots a(2a)$} for any $a\ge2$, with
overlap $K-2$, where $K=2a$.  That is straightforward to prove,
because the poset for $\pi^{(a)}$ is obtained from that for 1324 by
adding $a-2$ elements to the tail and to the head of the poset for
1324.

  In our forthcoming paper~\cite{BKS-patt-prob} we give an explicit
formula for the general case of two-level grid posets.

\section{Pattern cover with varying overlap size}\label{sec-vary-olap}

So far, the special cases considered involve pattern packing with a
fixed overlap $m$.  For some classes of patterns, we observe a further
decoupling that allows us to more directly `upgrade' the results
obtained for regular overlaps to arbitrary ones. In particular, let
$\pi$ be a $K$-pattern with auto-correlation at least 2. Tracking only
the occurrences of $\pi$ arising from overlaps of sizes $1$ and $2$,
we can express the vector of successive overlaps in terms of runs of
1s and 2s as
$$
\underline{1}_{r_1},\underline{2}_{q_1},\ldots,\underline{1}_{r_a},\underline{2}_{q_a}
$$
where $a\ge1$, $r_1,q_a\ge0$, and $q_i,r_{i+1}\ge1$ for $1\le i<a$ and
$\underline{x}_n$ is the word $x,\ldots,x$ of length $n$ (or the empty
word when $n=0$).

\begin{proposition}\label{prop-vary-olap} Let $\pi$ be a $K$-pattern
  of the form $1\sigma K$ with $K\ge3$. Let
  $\underline{1}_{r_1},\underline{2}_{q_1},\ldots,\underline{1}_{r_a},\underline{2}_{q_a}$
  specify all successive prescribed overlaps, disregarding potential additional
  occurrences.  Then the number of such permutations is given by the product
  $$
  N_{q_1}\cd N_{q_2}\cd\cdots\cd N_{q_a},
  $$
  where $N_q=|\cls_\pi^2(q+1)|$ for any $q\ge0$.
\end{proposition}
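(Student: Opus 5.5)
The plan is to show that every prescribed overlap of size $1$ splits the permutation into two independent pieces: the shared letter is larger than everything to its left and smaller than everything to its right. With that in hand, the permutation is a concatenation of maximal blocks of occurrences glued only through $2$-overlaps. The blocks occupy consecutive intervals of values, and each block, after standardization, is an arbitrary permutation $2$-packing the appropriate number of occurrences of $\pi$.

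First I fix the block structure. The overlap vector decomposes into maximal runs of consecutive $2$'s, separated by $1$'s. A run of $q$ consecutive $2$-overlaps links $q+1$ occurrences, which together $2$-pack $q+1$ occurrences of $\pi$. An occurrence both of whose neighbouring overlaps are $1$ (or which is at an end) forms a block with $q=0$, contributing $N_0=|\cls_\pi^2(1)|=1$. Thus the claimed product $N_{q_1}\cd\cdots\cd N_{q_a}$ is exactly the product over all blocks, with the runs of $1$'s contributing only trivial factors. Note also that for $\pi=1\sigma K$ the $2$-prefix $1\pi_2$ and the $2$-suffix $\pi_{K-1}K$ are both increasing, hence order isomorphic, so $2$-overlaps are consistent.

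The key step, which I expect to be the main obstacle, is the following claim. Within a block, the first letter of the first occurrence is the minimum of the block and the last letter of the last occurrence is its maximum. I would prove this by induction along the block. In a $2$-overlap between occurrences $i$ and $i+1$, the first letter of occurrence $i+1$ is the $(K-1)$st letter of occurrence $i$, so it exceeds the first letter of occurrence $i$. Likewise, the last letter of occurrence $i$ is the second letter of occurrence $i+1$, so it is below the last letter of occurrence $i+1$. Since $\pi_1=1$ and $\pi_K=K$, each occurrence lies between its own first and last letters. Hence the first letters increase and the last letters increase along the block, which gives the claim.

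At a $1$-overlap, the shared letter is the last letter (the maximum) of the block on its left and the first letter (the minimum) of the block on its right. Chaining these inequalities from left to right shows that each block's values form an interval, consecutive blocks share exactly their boundary value, and the intervals are forced by the block lengths. Equivalently, in the language of posets, the poset is an ordinal sum of the block posets glued at single elements. Conversely, choosing for each block an arbitrary permutation that $2$-packs $q_i+1$ occurrences, and shifting its values into the forced interval, produces a valid permutation. This map is inverse to standardizing the blocks, so the count is the product $\prod_i N_{q_i}$.
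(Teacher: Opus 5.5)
Your proposal is correct and follows essentially the same route as the paper. The paper also shows, by induction through the chain of $2$-overlaps, that each letter shared in a $1$-overlap exceeds everything to its left and is below everything to its right, so the segments between $1$-overlaps occupy forced value intervals and are independent $2$-packings; your block minimum/maximum formulation and explicit inverse map (shifting standardized $2$-packings into the forced intervals) simply make the paper's terser induction and bijection fully explicit.
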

\begin{proof} Given a permutation $\sigma$ of this form we claim that
the letter $\ell$ of the leftmost overlap of size 1 in $\sigma$ is
greater than all letters preceding $\ell$.  Now, $\ell$ is clearly
greater than all the letters in the occurrence of $\pi$ of which
$\ell$ is the last letter.  Therefore, if this occurrence overlaps in
two letters with a preceding occurrence, $\ell$ is larger than both
letters in that overlap.  In particular, it is larger than the last
letter of that overlap, which in turn is greater than all other
letters in that preceding occurrence.  An inductive argument now shows
that~$\ell$ must be greater than all the letters preceding it in
$\sigma$.  An analogous argument shows that $\ell$ is smaller than all
letters following it in $\sigma$.

This determines which subset of letters of $\sigma$ belongs to each of
the segments between successive 1-overlaps in $\sigma$.  A segment
between two successive 1-overlaps is thus a permutation covered by
$\pi$ with overlaps 2, whose number is $N_i$ where $i$ is the number
of 2-overlaps in that segment.  Since the ordering of letters in such
a segment is independent of the ordering in any other such the proof
follows.
\end{proof}

Therefore, for $\pi=1324$, for example, which can have overlaps of
sizes 1 and 2 only, we obtain the full analogue of the enumeration by
descent set where we count permutations with occurrences of $\pi$ in
arbitrary prescribed positions, using
Proposition~\ref{prop-Fuss-Catalan}:

\begin{corollary}\label{coro-vary-olap} The number of permutations
$\sigma$ with occurrences of the consecutive pattern 1324 prescribed
by the overlaps
$\underline{1}_{r_1},\underline{2}_{q_1},\ldots,\underline{1}_{r_a},\underline{2}_{q_a}$
is
$$
C_{q_1+1}\cd C_{q_2+1}\cd\cdots\cd C_{q_a+1}.
$$ 
\end{corollary}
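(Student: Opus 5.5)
The plan is to deduce Corollary~\ref{coro-vary-olap} directly from Proposition~\ref{prop-vary-olap} together with Proposition~\ref{prop-Fuss-Catalan}; no new combinatorics is needed.

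\emph{Step 1: the hypothesis of Proposition~\ref{prop-vary-olap} holds.} The pattern $1324$ has the form $1\sigma K$ with $\sigma=32$ and $K=4\ge3$. Two occurrences of $1324$ can overlap in $1$ or $2$ positions, but not in $3$: the $3$-prefix $132$ and $3$-suffix $324\sim 213$ are not order isomorphic. So any family of prescribed occurrences covering a permutation is described by a word of overlaps in $\{1,2\}$, written in runs as $\underline{1}_{r_1},\underline{2}_{q_1},\ldots,\underline{1}_{r_a},\underline{2}_{q_a}$. Proposition~\ref{prop-vary-olap} then gives the count
$$
N_{q_1}\cd N_{q_2}\cd\cdots\cd N_{q_a},\qquad N_q=|\cls_{1324}^2(q+1)|.
$$

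\emph{Step 2: evaluate $N_q$.} The $2$-prefix of $1324$ is $13$ and its $2$-suffix is $24$, so with $K=4$ Proposition~\ref{prop-Fuss-Catalan} applies. It gives
$$
|\cls_{1324}^2(j)|=\frac{1}{j+1}\binom{2j}{j}=C_j .
$$
Setting $j=q+1$ yields $N_q=C_{q+1}$. For $q=0$ this is $C_1=1$, which correctly matches a single occurrence with no $2$-overlaps. Substituting into Step~1 gives the stated product.

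\emph{Main obstacle.} The only point needing care is bookkeeping at the boundary runs, where $r_1=0$ or $q_a=0$ is allowed. One must check that each run $\underline{2}_{q_i}$ corresponds to exactly one segment between successive $1$-overlaps, and that this segment consists of $q_i+1$ occurrences. One must also check that the segments coming from the $\underline{1}_{r_i}$ runs are single occurrences contributing the factor $C_1=1$, so they do not change the product. This is exactly how the segments are counted in the proof of Proposition~\ref{prop-vary-olap}, so I expect it to go through with a short remark.
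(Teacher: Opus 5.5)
Your proposal is correct and is essentially the paper's own argument: specialize Proposition~\ref{prop-vary-olap} to $\pi=1324$, then evaluate $N_q=|\cls_{1324}^2(q+1)|=C_{q+1}$ using the case $K=4$ of Proposition~\ref{prop-Fuss-Catalan}. Your boundary bookkeeping also agrees with how that proposition counts segments, since a segment consisting of a single occurrence contributes $N_0=C_1=1$ and so leaves the product unchanged.
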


For non-overlapping occurrences, the picture is even simpler; see
Remark~\ref{remark-separated-components}.

\section{Open problems, questions and remarks}\label{sec-open}

\newcounter{nextnum} \newcommand{\nextnum}{ \refstepcounter{nextnum}
  \par\textbf{\thenextnum.}~ }

\nextnum Lemma~\ref{lem-dual} and Theorems~\ref{thm-cover-2},
\ref{thm-branch-is-chain} and \ref{thm-core-lattice} provide
restrictions that apply to all pattern posets.  Is it possible to give
a complete characterization?

\nextnum For which pattern-overlap combinations can we generalize
Proposition~\ref{prop-vary-olap} about varying overlaps?

\nextnum Obviously we can, in a trivial sense, compute the enumeration
for a pattern from its gap vectors, since they determine prefix and
suffix (see the paragraph preceding Theorem~\ref{thm-diff-vectors}).
For the patterns in Proposition~\ref{prop-prod-formula-trees} the
product formula is easily computed directly from the gap vectors and
the same is true of the patterns in
Proposition~\ref{prop-prod-formula-cones}.  Are there other classes of
patterns whose enumeration can be derived in reasonably simple ways
from their gap vectors?

\nextnum We have the following conjecture on the maximum number of
permutations $(m,j)$-packing a $K$-pattern as a function of $(K,m,j)$:

\begin{conjecture}\label{conj-max-numocc} For overlap $m\le K/2$, the
maximum is obtained when prefix and suffix are interleaved and
minimal, that is, for patterns with prefix $135\ldots(2m-1)$ and
suffix $246\ldots(2m)$.
\end{conjecture} It is straightforward to show that for $j$
occurrences a poset of this kind is obtained from the \sgp\ for $j$
occurrences of the pattern $13\ldots(2m-1)24\ldots(2m)$ by adding
branches of length $K-2m$ going up from the maximum element of each
occurrence.

For overlap $m=1$, the conjectured maximizing pattern has prefix 1 and
suffix 2, which, incidentally, was shown
in~\cite{elizalde-most-avoided-2013} to be the asymptotically most
\emph{avoided} consecutive pattern among those whose occurrences can
overlap in at most one place.

For $m>K/2$, the maximizing posets seem to have a structure quite
similar to the grid structure of \sgps, and in some cases they are
isomorphic to \sgps\ apart from having longer head/tail.

\nextnum\label{item-sep} A \emph{separator} is an element $z$
comparable to every element of the poset and whose removal splits the
poset into two nonempty subposets, neither of which is a chain.  The
number of linear extensions of such a poset is the product of the
numbers for each subposet.  Can we characterize patterns whose posets
$\ppmj$ have a separator for $j>m$?  It seems that there is always a
separator (namely $K$) for overlap 2 if the 2-prefix is $1a$ and the
2-suffix is $bc$, where $2<a<b<c-1<K-2$. For $135268479$ with overlap
$3$ and $j=4$ there are three separators; for $479258136$ with overlap
$6$ and $j=7$ there are five.

\nextnum The Wilf class in Proposition \ref{prop-prod-formula-trees},
with enumeration $(2j-1)\sfact{2}$: $1,3,15, 105,\ldots$ seems to
contain, for any given overlap $m>2$, a pattern of any length $K>2m$.
Can we find a uniformly described set of representatives, one for each
length?

Similar statements seem to apply to other sequences of shifted
factorials of the form $(dj-c)\sfact{d}$ and in fact many other
sequences.  The same can be said of the class whose enumeration is the
Catalan numbers.  Here is a handful of pattern-overlap combinations in
that class, whose posets only differ by the lengths of their heads and
tails:

{\verb< 1324-2<}\\
{\verb< 14253-3<}\\
{\verb< 136245-3<}\\
{\verb< 142536-4<}\\
{\verb< 1376245-3<}\\
{\verb< 369247815-5<}\\
{\verb< 162738495-7<}

What kind of operations on patterns might explain this?

\nextnum\label{item-num-seqs} Our computations for $K$ up to 12
suggest\footnote{These results are from running programs to compute the numbers
  of permutations covered by each pattern for each possible overlap between 1
  and $K-2$, and up to at least $7$ occurrences.  In these data, any two
  pattern-overlap combinations whose enumerations differ for seven occurrences
  differ already for five occurrences. Overlap $K-1$ is ignored, since that
  implies that a $K$-pattern $\pi$ is increasing or decreasing, with exactly one
  permutation of each length $n\ge K$ covered by $\pi$.  We have not tried to
  verify rigorously that our programming is correct, but comparisons with other
  methods of computing this have yielded no discrepancies.}
that there are 31,111 combinations of $K$-patterns and overlaps that
are inequivalent modulo the symmetries in
Section~\ref{sec-symmetries}, and 10,787 distinct initial enumerating
sequences for these. Of these combinations, 2,684 have the trivial
sequence $1,1,1,\ldots$ because the poset in question is a chain. Of
the remaining ones 452 have initial terms matching $(2^n)_{n\ge0}$,
385 matching $(3^n)_{n\ge0}$, 374 matching the Catalan numbers and 331
matching the sequence $\binom{3n}{n}/(2n+1)$ for~$n\ge0$. Other
sequences appear for fewer than 300 pattern-overlap combinations each,
and there are over 1,000 such combinations that have a unique
enumeration.

Here $n=j-1$ for the power sequences, and $n=j$ for the Catalan and
Fuss--Catalan sequences. Agreement of finitely many terms of course does not
establish Wilf equivalence, but it seems unlikely that any sequences in these
data that agree up to seven occurrences will split later.

Although we have not analyzed this exhaustively, it seems that for
most of these apparent Wilf classes (sets of pattern-overlap
combinations with the same enumeration) the posets in question are
isomorphic, apart from the number and placement of elements that
belong to head or tail, or else cover and are covered by a single
element, and are comparable to all elements of the poset.  Such
elements don't affect the number of linear extensions.  One exception
to this is the triple of patterns 125634 and 163425 with overlap 2 and
167359248 with overlap 3 (see
Figure~\ref{fig-125634-163425-167359248}), and it would be interesting
to find further such nontrivial Wilf classes.

\nextnum As mentioned in the footnote to item~\ref{item-num-seqs}, if
two pattern-overlap combinations have different enumeration for 1-7
occurrences, for patterns of length up to 12, then they have different
enumeration already for five occurrences.  Is it possible that 5 is a
cutoff for any pair of patterns, of arbitrary lengths, or that there
is at least a small such bound that depends only on pattern length?

\pagebreak
\nextnum The Wilf class with enumeration sequence $2^{j-1}$ contains,
among an apparent infinity of pattern-overlap combinations, 132546-2,
1324657-2 and 123584679-4.  All can be explained by straightforward
arguments showing that on each pair of successive lines of the stack
there is a pair of elements that are incomparable to each other, but
otherwise all elements are comparable.  All pattern posets we have
seen with this enumeration consist of cycles where two such
incomparable elements are the only deviation from the poset being a
chain.  Similar posets seem to exist for the sequence $k^n$ for any
positive $k$, and for an infinite collection of pattern-overlap
combinations in each case.

It seems plausible that understanding some of these Wilf equivalences
will lead to greater understanding than just for each class.  For
anybody interested in exploring these and other Wilf equivalences we
are happy to share both our data and the python code we have used to
compute the enumeration.

\nextnum In our enumeration data mentioned in item~\ref{item-num-seqs}
above, just over a third of the sequences have a product formula. Many
of these formulas have factors that are single binomial coefficients,
such as in Propositions~\ref{prop-prod-formula-trees}
and~\ref{prop-prod-formula-cones}. This is not true in general.
The enumeration for 135289467 with overlap 3, for example, has a
product formula according to our data, but the successive factors are
not single binomial coefficients.  However, the factors are polynomial
starting from $j=3$, namely
$121\binom{j-3}{0}+168\binom{j-3}{1}+72\binom{j-3}{2}$, in the
binomial basis.  All the product formulas in our data have such
polynomial factors.

\nextnum A curiosity in the enumeration for 14325 with overlap 2 is that all the
numbers are congruent to 6 mod 10, except when $j=2^k-1$, those are congruent to
1.  This holds at least up to 23 occurrences and we see no obvious reason for
that looking at the prime factorizations:

{\verb>  1 1:>}\\%
{\verb>  2 6: 2 3>}\\%
{\verb>  3 71: 71>}\\%
{\verb>  4 1266: 2 3 211>}\\%
{\verb>  5 30206: 2 11 1373>}\\%
{\verb>  6 902796: 2 2 3 23 3271>}\\%
{\verb>  7 32420011: 13 131 19037>}\\%
{\verb>  8 1359292626: 2 3 3 75516257>}\\%
{\verb>  9 65164480466: 2 1123 29013571>}\\%
{\verb> 10 3515569641156: 2 2 3 521 4729 118907>}\\%
{\verb> 11 210779736073446: 2 3 3907 8991542363>}\\%
{\verb> 12 13903319821066836: 2 2 3 131 523 16910805031>}\\%
{\verb> 13 1000559812125494076: 2 2 3 83379984343791173>}\\%
{\verb> 14 78012524487061315416: 2 2 2 3 773 4205073549324133>}\\%
{\verb> 15 6550837823204594551731: 3 11 19 8406637 1242816504869>}\\%
{\verb> 16 589404446176366002280146: 2 3 1796953 54667024881968347>}

Another apparent regularity in this is that 4 seems to divide all but
those for which $j=2^i+2^p-1$ where $0\le p\le i$, such as 8,9,11,15.

\subsection*{Acknowledgments} We thank Sergi Elizalde for drawing our
attention to \cite{Dwyer-Elizalde-2018, elizalde-most-avoided-2013}.
Although we only cite~\cite{Dotsenko-Khoroshkin-2013} and
~\cite{Elizalde-Noy-2012} in relation to the posets that play a
central role here, we first became aware of this connection when we
saw the reference to the unpublished work of Ran Pan on the {\sc
OEIS}~\cite{OEIS} page for sequence A274644, one of the sequences we
generated early on in our work.

\end{document}